\definecolor{light-gray}{gray}{0.50}
\newtheorem{theorem}{Theorem}[section]
\newtheorem{corollary}[theorem]{Corollary}
\newtheorem{lemma}[theorem]{Lemma}
\newtheorem{definition}[theorem]{Definition}
\newtheorem{remark}[theorem]{Remark}
\newtheorem{proposition}[theorem]{Proposition}
\numberwithin{equation}{section}
\def\ker{{\rm ker\,}}
\def\dom{{\rm dom\,}}
\def\max{{\rm max\,}}
\def\min{{\rm min\,}}
\def\dim{{\rm dim\,}}
 \newcommand{\cH}{{\mathcal H}} \def\Ext{{\rm Ext\,}}
\newcommand{\gG}{{\Gamma}}
\newcommand{\cC}{{\mathcal C}}
\newcommand{\gotH}{{\mathfrak H}}
 \DeclareMathOperator{\spann}{span} \DeclareMathOperator{\imm}{Im}
  \DeclareMathOperator{\oo}{o}
\def\Ext{{\rm Ext\,}}
\title{To the Spectral Theory  of the Bessel Operator on Finite Interval and Half-Line}
\author{Aleksandra~Yu.~Ananieva$^1$, Viktoriya~S.~Budika$^2$
\\{\ }\\
$^1$ Institute of Applied Mathematics and Mechanics  \\
R. Luxemburg  str. 74, Donetsk  \\
E-mail: ananeva89@gmail.com \\
$^2$ Institute of Applied Mathematics and Mechanics\\
R. Luxemburg  str. 74, Donetsk \\
E-mail: budyka.vik@gmail.com}
\date{}
\begin{document}
\maketitle
\begin{abstract}

\noindent

The minimal and maximal operators generated by the Bessel differential expression on the
finite interval and a half-line are studied. All non-negative self-adjoint extensions of
the minimal operator are described. Also we  obtain a description of the domain of the
Friedrichs extension of the minimal operator in the framework of extension theory of
symmetric operators by applying the technique of boundary triplets and the corresponding
Weyl functions, and by using the quadratic form method.
\end{abstract}

\section{Introduction}
The one-dimensional Bessel differential expression was investigated in the classical
form
\begin{equation}\label{E:1.1}
\tau_\nu=-\frac{d^2}{dx^2}+\frac{\nu^2-\frac{1}{4}}{x^2},\qquad
\nu\in[0,1)\setminus\{1/2\}
\end{equation}
on the half-line $\mathbb R_+$ in numerous papers. Here, the parameter $\nu$ is the
order of the Bessel functions involved. When $\nu=\frac{1}{2}$, it is the regular case.
In particular, some results of spectral analysis were investigated in
works~\cite{BDG,EvKalf, Ful, FulLag,KST}. We especially mention papers of W.N.~Everitt
and H.~Kalf~\cite{EvKalf,Kalf} the most relevant to our interest. Here, Titchmarsh--Weyl
$m$--coefficient was explicitly computed in $L^2(\mathbb R_+)$ using the classical
definition. From the Nevanlinna representation of this $m$--coefficient the spectral
function $\Sigma$ was obtained to describe the spectrum of the associated self-adjoint
operator in $L^2(\mathbb R_+)$. The additional analysis then yields the limit behaviour
of the functions in the domain of the Friedrichs extension (see L. Bruneau, J.
Derezi\'{n}ski and V. Georgescu \cite{BDG}, W.N.~Everitt and H.~Kalf~\cite{EvKalf,Kalf})
and Krein extension (see \cite{BDG}).

 In this paper, we consider Bessel operator~\eqref{E:1.1}. Under the above restriction
 ($\nu\in[0,1)$) the endpoint 0 of the equation
\begin{equation}\label{eq111}
-y''(x)+\frac{\nu^2-\frac{1}{4}}{x^2}y(x)=\lambda y(x)
\end{equation}
is the singular limit-circle case, with respect to $L^2(\mathbb R_+)$ or $L^2(0,b)$,
except for the regular case.

We study the minimal and maximal Bessel operators on a finite interval and a half-line.
We prove that the domain of the minimal operator ${A_{\nu,\infty}}_\min$ associated with
$\tau_\nu$ in $L^2(\mathbb R_+)$ is given by
\begin{equation}\label{E1}
\dom({A_{\nu,\infty}}_\min)={H}_0^2(\mathbb R_+),
\end{equation}
and we prove similar formula for the operator on a finite interval.

We investigate spectral properties of the Bessel operator by applying the technique of
boundary triplets and corresponding Weyl functions. This new approach to extension
theory of symmetric operators developed during last three decades (see
\cite{DM_91,DerMal95,GG} and references in therein).

We construct a boundary triplet for the maximal operators in $L^2(\mathbb R_+)$ and
$L^2(0,b)$ and compute the corresponding Weyl functions. We determine the domains of
Friedrichs and Krein extensions. In addition, all self-adjoint and all nonnegative
self-adjoint extensions of the minimal Bessel operator are described. We also obtain the
Weyl functions on half-line as a limit of corresponding Weyl functions of the operator
considered in the finite interval. In particular, we obtain other proofs of results of
L. Bruneau, J. Derezi\'{n}ski and V. Georgescu \cite{BDG}, W.N.~Everitt and
H.~Kalf~\cite{EvKalf,Kalf}.

 \section{Preliminaries}

\subsection{Boundary triplets and self-adjoint extension.}
In this section we briefly review the notion of abstract boundary triplets in the
extension theory of symmetric operators.

Let $A$ be a closed densely defined symmetric operator in the separable  Hilbert space
$\mathfrak H $ with equal deficiency indices $n_\pm(A)=\dim\ker(A^*\pm iI)\leq\infty$.

\begin{definition}[{\cite{GG}}]\label{D:3.1}
  \,\,A totality $\Pi=\{\mathcal H,\Gamma_0,\Gamma_1\}$ is called a {\rm boundary
triplet} for the adjoint operator $A^*$ of $A$ if $\mathcal H$ is an auxiliary Hilbert
space and
$\Gamma_0,\Gamma_1:\  \dom(A^*)\rightarrow\mathcal H$ are linear mappings such that\\
 $(i)$ the following abstract second  Green identity holds
      \begin{equation}\label{E:3.2}
      (A^*f,g)-(f,A^*g)=(\Gamma_1f,\Gamma_0g)_{\mathcal{H}}-(\Gamma_0f,\Gamma_1g)_{\mathcal{H}},\qquad f,g\in\dom(A^*);
    \end{equation}
    $(ii)$ the mapping $\Gamma:=(\Gamma_0,\Gamma_1)^\top: \dom(A^*)
\rightarrow  \mathcal H \oplus\mathcal H$ is surjective.
\end{definition}
First note that a boundary triplet 
for $A^*$ exists since the deficiency indices of $A$ are assumed to be
equal. Noreover, $\mathrm{n}_\pm(A) = \dim(\cH)$ and $A=A^*\upharpoonright\left(\ker(\Gamma_0) \cap \ker(\Gamma_1)\right)$ hold. Note also that a boundary triplet for $A^*$ is not unique.

A closed extension $\widetilde{A}$ of $A$ is called \emph{proper}
if $A\subseteq\widetilde{A}\subseteq A^*$. Two proper extensions
$\widetilde{A}_1$ and $\widetilde{A}_2$ of $A$ are called
\emph{disjoint} if
$\dom(\widetilde{A}_1)\cap\dom(\widetilde{A}_2)=\dom(A)$ and
\emph{transversal} if in addition
$\dom(\widetilde{A}_1)\dotplus\dom(\widetilde{A}_2)=\dom(A^*)$.
The set of all proper extensions of $A$ is denoted by $\Ext A.$

With   a  boundary   triplet   $\Pi=\{\mathcal H,\Gamma_0,\Gamma_1\}$ for $A^*$  one
associates two self-adjoint extensions $A_j:=A^*\upharpoonright \ker (\Gamma_j), \;
j\in\{0,1\}.$

\begin{proposition}[{\cite{DM_91,GG}}]\label{propo}
Let  $\Pi=\{\mathcal{H},\Gamma_0,\Gamma_1\}$  be   a boundary triplet for $A^*$. Then
the  mapping
\begin{equation}\label{ext}
\Ext_A\ni\widetilde{A}:=A_\Theta\rightarrow
\Theta:=\Gamma(\dom(\widetilde{A}))=\big\{\{\Gamma_0f,\Gamma_1f\}:\,\,f\in
\dom(\widetilde{A})\big\}
\end{equation}
establishes  a  bijective  correspondence   between  the  set  of all  closed proper
extensions $\Ext_A$  of $A$ and  the set of all closed   linear  relations
$\widetilde{\mathcal{C}}(\mathcal{H})$ in $\mathcal{H}$. Furthermore,  the following
assertions  hold.
\begin{itemize}
\item[\textit{(i)}]
  The  equality $(A_\Theta)^*=A_{\Theta^*}$
%
%
holds  for any  $\Theta\in\widetilde{\mathcal{C}}(\mathcal{H})$.

\item[\textit{(ii)}]
 The extension $A_\Theta$  in \eqref{ext} is  symmetric
(self-adjoint)  if  and only  if  $\Theta$  is symmetric (self-adjoint).
\item[\textit{(iii)}]
  If,  in  addition,    extensions $A_\Theta$ and
$A_0$ are disjoint, i.e., $\dom(A_\Theta)\cap\dom(A_0)=\dom(A)$, then \eqref{ext} takes
the form
\begin{equation}\label{extensB}
A_\Theta=A_B=A^*\!\upharpoonright\ker\bigl(\Gamma_1-B\Gamma_0\bigr),\quad
B\in\mathcal{C}(\mathcal{H}).
   \end{equation}
\end{itemize}
 \end{proposition}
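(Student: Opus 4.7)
The plan is to first identify $\ker\Gamma$ with $\dom(A)$, which reduces the claimed bijection to a linear-algebraic statement about subspaces lying between $\dom(A)$ and $\dom(A^*)$, and then to read off (i)--(iii) directly from the second Green identity~\eqref{E:3.2}.

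\textbf{Step 1 (the kernel and the bijection).} I will begin by showing $\ker\Gamma=\dom(A)$. For $f\in\dom(A)$ and any $g\in\dom(A^*)$, the left-hand side of~\eqref{E:3.2} vanishes; by surjectivity of $\Gamma$, letting $(\Gamma_0g,\Gamma_1g)$ range over $\mathcal{H}\oplus\mathcal{H}$ forces $\Gamma_0f=\Gamma_1f=0$. Conversely, if $\Gamma f=0$, \eqref{E:3.2} gives $(A^*f,g)=(f,A^*g)$ for every $g\in\dom(A^*)$, whence $f\in\dom(A^{**})=\dom(A)$. Since $\ker\Gamma=\dom(A)$ and $\Gamma$ is surjective, the first isomorphism theorem produces a linear bijection between subspaces $\mathcal{D}$ with $\dom(A)\subseteq\mathcal{D}\subseteq\dom(A^*)$ and linear relations $\Theta\subseteq\mathcal{H}\oplus\mathcal{H}$. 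A standard closed-graph argument in the graph norm of $A^*$ (using that the deficiency subspaces $\ker(A^*\mp i)$ are mapped bijectively onto $\mathcal{H}$ by $\Gamma_0\mp i\Gamma_1$) shows $\Gamma$ is continuous, so closed extensions correspond to closed relations, i.e.\ the bijection restricts to one between $\Ext A$ and $\widetilde{\mathcal{C}}(\mathcal{H})$.

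\textbf{Step 2 (adjoints, part (i)).} For fixed $\Theta\in\widetilde{\mathcal{C}}(\mathcal{H})$, I will compute $(A_\Theta)^*$ directly. If $g\in\dom((A_\Theta)^*)$, restricting test vectors to $\dom(A)\subseteq\dom(A_\Theta)$ first shows $g\in\dom(A^*)$. Then \eqref{E:3.2} applied to $f\in\dom(A_\Theta)$ and this $g$ reduces to
\begin{equation*}
(\Gamma_1f,\Gamma_0g)_{\mathcal{H}}=(\Gamma_0f,\Gamma_1g)_{\mathcal{H}}\qquad\text{for every }\{\Gamma_0f,\Gamma_1f\}\in\Theta,
\end{equation*}
which is precisely the defining condition for $\{\Gamma_0g,\Gamma_1g\}\in\Theta^*$. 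Reading the same calculation in the other direction, and again using surjectivity of $\Gamma$ on $\dom(A^*)$ to realize every element of $\Theta$ as a boundary value, yields the reverse inclusion, so $(A_\Theta)^*=A_{\Theta^*}$.

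\textbf{Step 3 (parts (ii) and (iii)).} Part (ii) is then immediate from (i): $A_\Theta\subseteq(A_\Theta)^*$ is equivalent to $\Theta\subseteq\Theta^*$, and symmetry or self-adjointness on one side transfers to the other through the bijection of Step~1. For (iii), the disjointness hypothesis combined with $\dom(A_0)=\ker\Gamma_0$ means $\Gamma_0\upharpoonright\dom(A_\Theta)$ has kernel $\dom(A)$; equivalently, the projection of $\Theta$ onto its first component is injective. Therefore $\Theta$ is the graph of an operator $B$, and the closedness of $\Theta$ in $\mathcal{H}\oplus\mathcal{H}$ forces $B\in\mathcal{C}(\mathcal{H})$. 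The membership $\{\Gamma_0f,\Gamma_1f\}\in\Theta$ then reads $\Gamma_1f=B\Gamma_0f$, giving~\eqref{extensB}.

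The main obstacle is the closedness part of Step~1: the algebraic bijection between intermediate subspaces and linear relations is essentially a bookkeeping exercise, but upgrading it to the target $\widetilde{\mathcal{C}}(\mathcal{H})$ requires continuity of $\Gamma$ in the graph norm of $A^*$, which is not quite formal. Once that boundedness is in place, parts (i)--(iii) are purely formal consequences of~\eqref{E:3.2}.
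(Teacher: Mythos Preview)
The paper does not prove this proposition at all: it appears in the preliminaries section with the citation \cite{DM_91,GG} and is simply quoted as a known result, with no accompanying proof environment. So there is nothing in the paper to compare your argument against.

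That said, your sketch is the standard route to this result and is essentially correct. The only soft spot is the one you flag yourself in Step~1: to pass from the algebraic bijection to one between \emph{closed} extensions and \emph{closed} relations you need $\Gamma$ to be bounded from $\dom(A^*)$ (with the graph norm) onto $\mathcal{H}\oplus\mathcal{H}$, and then invoke the open mapping theorem so that $\Gamma^{-1}(\Theta)$ is closed precisely when $\Theta$ is. Your parenthetical about the deficiency subspaces is the right idea but is not quite a proof of boundedness; the clean way is to use von~Neumann's decomposition $\dom(A^*)=\dom(A)\dotplus\mathfrak{N}_i\dotplus\mathfrak{N}_{-i}$ together with $\ker\Gamma=\dom(A)$ and surjectivity to see that $\Gamma$ restricts to a bijection of the finite- or closed-codimensional complement onto $\mathcal{H}\oplus\mathcal{H}$, whence boundedness follows from the closed graph theorem. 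With that in place, Steps~2 and~3 go through exactly as you wrote them.
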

\subsection{Weyl functions and extension of nonnegative operator}

It is known that the classical  Weyl-Titchmarsh
functions play an important role in the direct and inverse
spectral theory of singular Sturm-Liouville operators.
In \cite{DM_91}  the concept of the classical Weyl--Titchmarsh $m$-function
from the theory of Sturm-Liouville operators  was generalized to
the case of symmetric operators  with equal deficiency indices.
The role of abstract Weyl functions in the extension theory  is
similar to that of the classical Weyl--Titchmarsh $m$-function in the spectral
theory of singular Sturm-Liouville operators.

Let $\frak N_z:=\ker(A^*-z)$ be the defect subspace of $A$.

\begin{definition}[{\cite{DM_91}}]\label{D:4.0}
Let $A$ be a densely defined closed symmetric operator in $\mathfrak{H}$ with equal
deficiency indices and let $\Pi=\{\cH,\gG_0,\gG_1\}$ be a boundary triplet for $A^*$.
The operator valued functions $\gamma :\rho(A_0)\rightarrow  [\cH,\mathfrak{H}]$ and
$M:\rho(A_0)\rightarrow [\cH]$ defined by
\begin{equation}\label{II.1.3_01}
\gamma(z):=\bigl(\Gamma_0\!\upharpoonright\mathfrak{N}_z\bigr)^{-1}
\qquad\text{and}\qquad M(z):=\Gamma_1\gamma(z), \qquad z\in\rho(A_0),
\end{equation}
are called the {\em $\gamma$-field} and the {\em Weyl function}, respectively,
corresponding to the boundary triplet $\Pi.$
\end{definition}
The $\gamma$-field $\gamma(\cdot)$ and the Weyl function $M(\cdot)$ in \eqref{II.1.3_01}
are well defined. 
Moreover, both $\gamma(\cdot)$ and $M(\cdot)$ are holomorphic on $\rho(A_0)$ and the
following relations hold (see \cite{DM_91})
\begin{gather}\label{II.1.3_02'}
\gamma(z)=\bigl(I+(z-\zeta)(A_0-z)^{-1}\bigr)\gamma(\zeta),
\\
%
%
%
%
\label{II.1.3_02}
M(z)-M(\zeta)^*=(z-\overline\zeta)\gamma(\zeta)^*\gamma(z), 
\\
   \label{II.1.3_02B}
\gamma^*(\overline z)
 = \Gamma_1(A_0 - z)^{-1}, \qquad z,\
\zeta\in\rho(A_0).
      \end{gather}
Identity \eqref{II.1.3_02}  yields that $M(\cdot)$ is an $R_{\cH}$-function (or {\it
Nevanlinna function}), that is, $M(\cdot)$ is an ($[\cH]$-valued) holomorphic function
on $\mathbb{C}\setminus \mathbb{R}$ and
     \begin{equation}\label{II.1.3_03}
 \imm z\cdot\imm M(z)\geq 0,\qquad  M(z^*)=M(\overline
z),\qquad  z\in \mathbb{C}\setminus \mathbb{R}.
\end{equation}
 Besides, it follows  from \eqref{II.1.3_02}
that  $M(\cdot)$ satisfies $0\in \rho(\imm M(z))$ for
$z\in\mathbb{C}\setminus\mathbb{R}$.
Since $A$ is densely defined, $M(\cdot)$ admits an integral representation (see, for
instance, \cite{DerMal95})
\begin{equation}\label{WF_intrepr}
M(z)=C_0+\int_{\mathbb{R}}\left(\frac{1}{t-z}-\frac{t}{1+t^2}\right)d\Sigma_M(t),\qquad
z\in\rho(A_0),
\end{equation}
where $\Sigma_M(\cdot)$ is an operator-valued Borel measure on $\mathbb{R}$ satisfying
$\int_\mathbb{R} \frac{1}{1 + t^2}d\Sigma_M(t) \in [\cH]$ and $C_0 = C_0^*\in [\cH]$.
The integral in (\ref{WF_intrepr}) is understood in the strong sense.
%
%

In contrast to spectral measures of self-adjoint operators the measure $\Sigma_M(\cdot)$
is not necessarily orthogonal. However, the measure $\Sigma_M$ is uniquely determined by
the Nevanlinna function $M(\cdot)$.  The operator-valued measure $\Sigma_M$ is called
\emph{the spectral measure} of $M(\cdot)$. If $A$ is a simple symmetric operator, then
the Weyl function $M(\cdot)$ determines the pair $\{A,A_0\}$ up to unitary equivalence
(see \cite{DerMal95}). Due to this fact,  spectral
properties of $A_0$ can be expressed in terms of $M(\cdot)$.\\
%
%
%
%
%
%
%
%
%
%
Assume that  a symmetric operator $A\in \cC(\mathfrak{H})$ is
nonnegative. Then the set $\Ext_A(0,\infty)$ of its  nonnegative
self-adjoint extensions is non-empty (see \cite{AG}).
Moreover, there is a maximal 
nonnegative extension $A_{\rm F}$ (also called \emph{Friedrichs'}
or \emph{hard} extension) and there is a  minimal nonnegative
extension $A_{\rm K}$ (\emph{Krein's} or \emph{soft} extension)
satisfying
$$
(A_F+x)^{-1} \le ( \widetilde{A} + x)^{-1} \le (A_K + x)^{-1}, \qquad x\in
(0,\infty), \quad \widetilde{A}\in  \Ext_A(0,\infty)
$$
(for detail we refer the reader to \cite{AG}).

The following proposition characterizes the Friedrichs and Krein extensions in terms of
the Weyl function.

\begin{proposition}[{\cite{DM_91, DerMal95}}]\label{prkf}
Let $A$ be  a densely defined nonnegative symmetric operator with finite deficiency
indices  in $\gotH$, and  let $\Pi=\{\cH,\gG_0,\gG_1\}$ be a boundary
 triplet for  $A^*$.
\, Let  also  $M(\cdot)$ be the corresponding Weyl function.  Then the  following
assertions hold.
\begin{itemize}
\item[\textit{(i)}] Extensions
$A_0$  and $A_K$  are  disjoint  ($A_0$ and $A_F$ are disjoint) if and only if
$$M(0)\in\cC(\cH) \qquad (M(-\infty)\in\cC(\cH),\ \  \mbox{respectively}).$$
Moreover,
\begin{align*}
\dom(A_K)&=\dom(A^*)\upharpoonright\ker(\Gamma_1-M(0)\Gamma_0)\\
(\dom(A_F)&=\dom(A^*)\upharpoonright\ker(\Gamma_1-M(-\infty)\Gamma_0),\ \
\mbox{respectively}).
\end{align*}
\item[\textit{(ii)}]  $A_0=A_K$   ($A_0=A_F$) if  and only  if
\begin{equation*}
 \begin{aligned} 
 \lim_{x\uparrow0}(M(x)f,f)&=+\infty,\quad f\in\cH\setminus\{0\} \\
 (\lim_{x\downarrow-\infty}(M(x)f,f)&=-\infty, \quad f\in\cH\setminus\{0\},
\quad \mbox{respectively}).
 \end{aligned}
\end{equation*}
 \item[\textit{(iii)}]   The set of all non-negative self-adjoint extensions of $A$
 admits
 parametrization
 \eqref{ext},  where $\Theta$  satisfies
\begin{equation}\label{kappa1}
 \Theta-M(0)\geq 0\qquad  (\Theta-M(-\infty)\leq 0,\,\, \mbox{respectively}).
 \end{equation}

   \end{itemize}
\end{proposition}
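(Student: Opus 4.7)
The plan is to combine the bijective description of proper extensions from Proposition~\ref{propo} with the monotonicity of the Weyl function on the negative real axis inherited from~\eqref{II.1.3_02}. Since $A\ge 0$, the resolvent set of any nonnegative self-adjoint extension contains $(-\infty,0)$, so (choosing the boundary triplet so that $A_0\ge 0$, or otherwise arguing on the subset $(-\infty,0)\cap\rho(A_0)$) the Weyl function $M(\cdot)$ extends holomorphically to $(-\infty,0)$, and setting $\zeta=\overline{z}=x<0$ in~\eqref{II.1.3_02} gives $M'(x)=\gamma(x)^*\gamma(x)\ge 0$. From this monotonicity the strong limits $M(0):=\lim_{x\uparrow 0}M(x)$ and $M(-\infty):=\lim_{x\downarrow -\infty}M(x)$ exist in the quadratic-form sense and define self-adjoint (possibly unbounded) operators on their natural finiteness domains in $\cH$.

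For part (i), I would use~\eqref{II.1.3_01} to write $\Gamma_0\gamma(x)h=h$ and $\Gamma_1\gamma(x)h=M(x)h$ for $h\in\cH$ and $x\in(-\infty,0)$. Passing $x\uparrow 0$ one gets a defect element $f_h\in\mathfrak{N}_0=\ker(A^*)$ with $\Gamma_0 f_h = h$ and $\Gamma_1 f_h = M(0)h$. For nonnegative $A$ the Krein extension is characterized by $\mathfrak{N}_0\subseteq\dom(A_K)$, $A_K|_{\mathfrak{N}_0}=0$, and $\dom(A_K)=\dom(A)\dotplus\mathfrak{N}_0$, so every $f\in\dom(A_K)$ satisfies the boundary condition $\Gamma_1 f = M(0)\Gamma_0 f$. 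By Proposition~\ref{propo}, disjointness of $A_K$ and $A_0$ is equivalent to the associated relation being the graph of a closed operator, i.e.\ to $M(0)\in\cC(\cH)$, in which case~\eqref{extensB} yields the claimed description of $\dom(A_K)$. The argument for $A_F$ is parallel, using the limit $x\downarrow-\infty$ and the characterization of the Friedrichs domain through its form.

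For part (ii), I would translate $A_0=A_K$ into the statement that the parameter relation $\Theta_K$ for $A_K$ is the purely multivalued relation $\{0\}\oplus\cH$. In view of part (i) and the monotonicity of $x\mapsto(M(x)f,f)$ on $(-\infty,0)$, the nonexistence of any nontrivial finite directional limit of $M(x)$ as $x\uparrow 0$ is equivalent to the pointwise divergence $(M(x)f,f)\to+\infty$ for every $f\in\cH\setminus\{0\}$; the case $A_0=A_F$ is the symmetric statement as $x\to-\infty$.

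For part (iii), I would invoke Krein's result that every $\widetilde{A}\in\Ext_A(0,\infty)$ lies between $A_K$ and $A_F$ in the resolvent (equivalently, form) order, and transfer this through the bijection of Proposition~\ref{propo} to the order of the parameters $\Theta$, obtaining $M(0)\le\Theta\le M(-\infty)$, which is~\eqref{kappa1}. I expect the main obstacle to be the rigorous identification of $M(0)$ and $M(-\infty)$ as closed self-adjoint operators on the correct domains, and the verification that their graphs parametrize $A_K$ and $A_F$ compatibly with the monotone order of extensions; once that passage to the boundary of $\rho(A_0)$ is controlled via the Nevanlinna integral representation~\eqref{WF_intrepr} and the identity~\eqref{II.1.3_02B}, the remaining steps reduce to direct applications of Proposition~\ref{propo} and the definitions of $\gamma(\cdot)$ and $M(\cdot)$.
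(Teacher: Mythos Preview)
The paper does not prove Proposition~\ref{prkf}; it is quoted verbatim from the references \cite{DM_91,DerMal95} as background material in the preliminaries, and no argument is supplied. There is therefore no ``paper's own proof'' to compare your plan against.

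That said, your outline is the standard route taken in \cite{DM_91,DerMal95}: monotonicity of $M$ on the negative half-axis via \eqref{II.1.3_02}, existence of the form-limits $M(0)$ and $M(-\infty)$, identification of $A_K$ through $\ker A^*\subset\dom(A_K)$, and transfer of Krein's resolvent ordering to the parameter $\Theta$ via Proposition~\ref{propo}. One point to be careful with: the proposition is stated for an \emph{arbitrary} boundary triplet, so $A_0$ need not be nonnegative and $(-\infty,0)$ need not lie in $\rho(A_0)$. Your parenthetical ``choosing the boundary triplet so that $A_0\ge 0$'' changes the hypothesis; the clean way is to first prove the result for a triplet with $A_0=A_F$ (where $(-\infty,0)\subset\rho(A_0)$ is automatic) and then transport it to a general triplet via the known affine transformation law for Weyl functions under change of boundary triplet. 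Apart from this, your plan matches the original arguments.
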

\subsection{Bessel functions}

Consider the equation

\begin{equation}\label{Bes_eq}
z^2\frac{d^2u}{dz^2}+z\frac{du}{dz}+(z^2-\nu^2)u=0.
\end{equation}
Solutions of the~\eqref{Bes_eq} are the Bessel functions of the first $J_{\pm\nu}$ and
second $Y_\nu$ kind, respectively (see~\cite[Ch. 9]{A&S},~\cite[App.
2]{AG},~\cite[p.~284--285]{Naj69}).

     Recall that the asymptotic behaviors of
the Bessel functions $J_{\nu}(t)$ and $J_{-\nu}(t)$ for $t\rightarrow0$ have the form

\begin{equation}\label{Bessel_0}
J_{\nu}(t)=\frac{t^{\nu}}{2^{\nu}\Gamma(1+\nu)}[1+O(t^2)], \qquad
J_{-\nu}(t)=\frac{2^{\nu}}{\Gamma(1-\nu)}t^{-\nu}[1+O(t^2)],\quad  t\to0,
\end{equation}
and the asymptotic behavior of the Bessel functions $Y_{\nu}(t)$ for $t\rightarrow0$ has
the form
\begin{equation}\label{Bessel_Y}
Y_{0}(t)=\frac{2}{\pi}\left(\log\left(\frac{t}{2}\right)+\gamma\right)\cdot[1+O(t^2)],
\qquad
Y_{\nu}(t)=-\frac{\Gamma(\nu)}{\pi}\left(\frac{2}{t}\right)^{\nu}\cdot[1+O(t^2)],\quad
t\to0,
\end{equation}
where $\gamma$ is Euler's constant.

 Moreover as $t\rightarrow\infty$ we have
 \begin{equation}\label{asym_infty}
 \left\{
   \begin{array}{l}
     J_{\nu}(t)=
 \sqrt{\frac{2}{\pi t}}\cos\left(t-\frac{\nu\pi}{2}-\frac{\pi}{4}\right)+O(|t|^{-\frac{3}{2}}), \\
     J_{-\nu}(t)=
 \sqrt{\frac{2}{\pi t}}\cos\left(t+\frac{\nu\pi}{2}-\frac{\pi}{4}\right)+O(|t|^{-\frac{3}{2}}), \\
   Y_{\nu}(t)=
 \sqrt{\frac{2}{\pi t}}\sin\left(t-\frac{\nu\pi}{2}-\frac{\pi}{4}\right)+O(|t|^{-\frac{3}{2}}), \\
   \end{array}
 \right.\qquad t\to\infty.
 \end{equation}

 Also, we need decomposition of the Bessel functions into Taylor series about zero
(see~\cite[Formulas 9.1.10, 9.1.12, 9.1.13]{A&S})
\begin{equation}\label{Jnu}
J_\nu(z)=\left(\frac{1}{2}z\right)^\nu\sum\limits_{k=0}^\infty\frac{\left(-\frac{1}{4}z^2\right)^k}
{k!\Gamma(\nu+k+1)},
\end{equation}
\begin{equation}\label{J0}
J_0(z)=1-\frac{\frac{1}{4}z^2}{(1!)^2}+\frac{\left(\frac{1}{4}z^2\right)^2}{(2!)^2}-
\frac{\left(\frac{1}{4}z^2\right)^3}{(3!)^2}+\ldots,
\end{equation}
\begin{equation}\label{Y0}
Y_0(z)=\frac{2}{\pi}\left\{\log\left(\frac{1}{2}z\right)+\gamma\right\}J_0(z)+\frac{2}{\pi}\left\{
\frac{\frac{1}{4}z^2}{(1!)^2}-\left(1+\frac{1}{2}\right)\frac{\left(\frac{1}{4}z^2\right)^2}{(2!)^2}+
\left(1+\frac{1}{2}+\frac{1}{3}\right)\frac{\left(\frac{1}{4}z^2\right)^3}{(3!)^2}-\ldots\right\}.
\end{equation}

We use the following properties of Bessel functions (see~\cite[Formula 9.1.28]{A&S})
\begin{equation}\label{Bessel'}
J'_0(t)=-J_1(t), \qquad Y'_0(t)=-Y_1(t).
\end{equation}
 Also recall~\cite[App. 2]{AG} that the Bessel function $Y_\nu$ of the second kind is given by
 \begin{equation}\label{E:Y_v}
Y_\nu(t)=\frac{J_\nu(t)\cos\pi\nu-J_{-\nu}(t)}{\sin\pi\nu}, \qquad \nu\neq0.
 \end{equation}

Next, we need formulas (see~\cite[Formula 9.1.29]{A&S})
\begin{eqnarray}\label{9.1.29}
zf_{\nu}^{'}(z)=\l qz^qf_{\nu-1}(z)+(p-\nu q)f_\nu(z),\nonumber\\
zf_{\nu}^{'}(z)=-\l qz^qf_{\nu+1}(z)+(p+\nu q)f_\nu(z),
\end{eqnarray}
in which $f_\nu(z)=z^pG_\nu(\l z^q)$ where $G_{\nu}(\cdot)$ is one of the Bessel
functions $J_{\nu}(\cdot)$, $Y_{\nu}(\cdot)$ or a linear combination, and $p$, $q$, $\l$
do not depend on $\nu$.

Applying formula \eqref{9.1.29} for $\l=1$, $q=1/2$, $p=0$ to the functions
$f_{\nu}=x^{1/2}G_{\nu}(x\sqrt{z})$ where $G_{\nu}(\cdot)$ is one of the Bessel
functions $J_{\nu}(\cdot)$, $Y_{\nu}(\cdot)$, we obtain
 \begin{equation}\label{E:reccur+}
 [f_{\nu},
 x^{1/2+\nu}]_x=\sqrt{z}x^{1/2+\nu}f_{\nu+1}, \qquad
  [f_{-\nu}, x^{1/2+\nu}]_x=-\sqrt{z}x^{1/2+\nu}f_{-\nu-1},
 \end{equation}
 and
\begin{equation}\label{E:reccur-}
[f_{\nu}, x^{1/2-\nu}]_x=-\sqrt{z}x^{1/2-\nu}f_{\nu-1},\qquad [f_{-\nu},
 x^{1/2-\nu}]_x=\sqrt{z}x^{1/2-\nu}f_{-\nu+1},
 \end{equation}
where $[f,g]_x:=f(x)\overline{{g}'(x)}-f'(x)\overline{{g}(x)}$, for all
$x\in\mathbb{R}_+.$

The general solution of the equation~\eqref{eq111} is given by
\begin{equation}
y(x;\lambda)=c_1x^{1/2}J_\nu(x\sqrt{\lambda})+c_2x^{1/2}Y_\nu(x\sqrt{\lambda}),
\end{equation}
where $c_1$, $c_2$ are arbitrary constants.

 \section{Bessel operator $S_{\nu;b}$ on the interval}
In what follows, we need the following auxiliary lemma.

\begin{lemma}[{\cite[p.~318--319]{Stein}}]\label{L:1.115}
Let $T_K$ be the operator in $L^p[0,\infty)$ of the form
\begin{equation}\label{E:1.109}
T_K: f\mapsto \int \limits_0^{\infty}K(x,t)f(t)dt,
\end{equation}
and its kernel $K(x,t)$ has a degree of homogeneity $-1$, i.e. $K(\lambda x, \lambda
t)=\lambda^{-1} K(x,t),$ $\lambda
>0$. Then the operator $T_K$ is bounded in $L^p[0,\infty)$ and its norm is
\begin{equation}\label{E:1.110}
\|T_K\|_p:=\|T_K\|_{L^p\rightarrow L^p}= \int \limits_0^{\infty}|K(1,t)|t^{-1/p}dy.
\end{equation}
\end{lemma}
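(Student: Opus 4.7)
The plan is to exploit the $(-1)$-homogeneity of $K$ to convert $T_K$ into a superposition of multiplicative dilations of $f$, and then invoke Minkowski's integral inequality. Specifically, the substitution $t = xs$ in \eqref{E:1.109}, together with the identity $K(x, xs) = x^{-1} K(1, s)$ (obtained by setting $\lambda = 1/x$ in the homogeneity hypothesis), rewrites the operator as
\begin{equation*}
(T_K f)(x) = \int_0^\infty K(1, s)\, f(xs)\, ds.
\end{equation*}

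Applying Minkowski's integral inequality in the $L^p(dx)$-norm yields
\begin{equation*}
\|T_K f\|_p \leq \int_0^\infty |K(1,s)|\, \|f(\cdot\, s)\|_p\, ds,
\end{equation*}
and the further substitution $y = xs$ gives the scaling identity $\|f(\cdot\, s)\|_p = s^{-1/p}\|f\|_p$. Combining these bounds produces
\begin{equation*}
\|T_K f\|_p \leq \|f\|_p \int_0^\infty |K(1,s)|\, s^{-1/p}\, ds,
\end{equation*}
which proves boundedness of $T_K$ on $L^p[0,\infty)$ and establishes the half $\|T_K\|_p \leq \int_0^\infty |K(1,s)|\, s^{-1/p}\, ds$.

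For the reverse inequality---and hence the equality in \eqref{E:1.110}---I would test against approximations of the formal scale-invariant eigenfunction $x \mapsto x^{-1/p}$, which formally saturates the bound but fails to lie in $L^p(0, \infty)$. Concretely, set $f_\epsilon(x) := x^{-1/p - \epsilon}$ for $x \geq 1$ and $f_\epsilon(x) := 0$ otherwise, so that $\|f_\epsilon\|_p^p = 1/(\epsilon p)$, and the same substitution gives, for $x \geq 1$,
\begin{equation*}
(T_K f_\epsilon)(x) = x^{-1/p - \epsilon}\int_{1/x}^\infty K(1, s)\, s^{-1/p - \epsilon}\, ds.
\end{equation*}
Computing $\|T_K f_\epsilon\|_p/\|f_\epsilon\|_p$ and letting $\epsilon \to 0^+$, the lower endpoint $1/x$ is pushed to $0$ for large $x$, and a dominated-convergence argument recovers $\int_0^\infty |K(1,s)|\, s^{-1/p}\, ds$ in the limit (the absolute value appearing in \eqref{E:1.110} is obtained by first reducing to the non-negative case $K \geq 0$ via standard sign manipulations of $K(1,\cdot)$).

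The boundedness half is routine once the homogeneity is used correctly---Minkowski's integral inequality does all of the work. The main obstacle is the sharpness assertion, which requires the delicate $\epsilon \to 0^+$ asymptotics of the test sequence $f_\epsilon$ together with careful bookkeeping of the tail integral $\int_{1/x}^\infty$; this is where the \emph{exact} value of $\|T_K\|_p$, rather than merely an upper estimate, is recovered, and where one must take care that the integral on the right-hand side of \eqref{E:1.110} is finite so that dominated convergence is legitimate.
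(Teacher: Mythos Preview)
The paper does not supply its own proof of this lemma; it is quoted verbatim from Stein with the page citation, and is used only as a black box in the proof of Lemma~\ref{L:1.2}. Your argument is precisely the classical one found at the cited location: the substitution $t=xs$ together with $(-1)$-homogeneity rewrites $T_Kf$ as an average of dilates of $f$, Minkowski's integral inequality then gives the upper bound $\|T_K\|_p\le\int_0^\infty|K(1,s)|s^{-1/p}\,ds$, and the near-extremal test functions $f_\epsilon(x)=x^{-1/p-\epsilon}\chi_{[1,\infty)}(x)$ recover sharpness in the limit $\epsilon\to0^+$.

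One small caveat on the sharpness half: for a \emph{signed} kernel $K$ the equality $\|T_K\|_p=\int_0^\infty|K(1,s)|s^{-1/p}\,ds$ need not hold---only the inequality $\le$ is guaranteed---and the phrase ``standard sign manipulations of $K(1,\cdot)$'' does not actually close this gap, since replacing $K$ by $|K|$ changes the operator. The equality is genuine when $K\ge0$, and this is exactly the situation in the paper's sole application (the kernel in \eqref{E:1.102} is non-negative), so the issue is harmless here.
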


Suppose further that $\mathcal I$ is the operator of integration, $\mathcal
I:f\mapsto\int\limits_0^xf(t)dt$. Then
\begin{equation}\label{E:I}
\mathcal I^2f=\int \limits_0^{x}(x-t) f(t)dt.
\end{equation}
Also assume that $Q: f\mapsto \frac{1}{x^2}f(x)$.

\begin{lemma}\label{L:1.2}
 The operator $Q\mathcal I^2$
\begin{equation}\label{E:1.100}
Q\mathcal I^2:f\mapsto\frac{1}{x^2} \int \limits_0^{x}(x-t) f(t)dt,
\end{equation}
is bounded in $L^2[0, b]$ for each $b \in (0, \infty]$, and $\|Q\mathcal I^2 \|_2=
\frac{4}{3}.$
\end{lemma}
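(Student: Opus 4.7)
The plan is to reduce everything to Lemma~3.1 (Stein). Rewrite $Q\mathcal{I}^2$ as an integral operator on $[0,\infty)$ with kernel
\[
K(x,t)=\frac{x-t}{x^2}\,\chi_{\{0\le t\le x\}},
\]
so that $(Q\mathcal{I}^2 f)(x)=\int_0^\infty K(x,t)f(t)\,dt$. First I would verify that $K$ is homogeneous of degree $-1$: for $\lambda>0$ the condition $0\le\lambda t\le\lambda x$ coincides with $0\le t\le x$, and
\[
K(\lambda x,\lambda t)=\frac{\lambda x-\lambda t}{\lambda^2 x^2}=\lambda^{-1}K(x,t).
\]

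With homogeneity confirmed, Lemma~3.1 applies with $p=2$ and gives
\[
\|Q\mathcal{I}^2\|_{L^2(0,\infty)\to L^2(0,\infty)}=\int_0^\infty |K(1,t)|\,t^{-1/2}\,dt=\int_0^1(1-t)\,t^{-1/2}\,dt.
\]
The remaining integral is a routine Beta-type computation: $\int_0^1 t^{-1/2}\,dt-\int_0^1 t^{1/2}\,dt=2-\tfrac{2}{3}=\tfrac{4}{3}$, which yields the claimed value of the norm on the half-line.

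For a finite interval $b\in(0,\infty)$, I would argue by zero-extension. Given $f\in L^2[0,b]$, set $\tilde f:=f$ on $[0,b]$ and $\tilde f:=0$ on $(b,\infty)$. Because the upper limit of integration in~\eqref{E:1.100} is $x$, for every $x\in[0,b]$ we have $(Q\mathcal{I}^2 f)(x)=(Q\mathcal{I}^2 \tilde f)(x)$. Therefore
\[
\|Q\mathcal{I}^2 f\|_{L^2[0,b]}\le\|Q\mathcal{I}^2\tilde f\|_{L^2[0,\infty)}\le\tfrac{4}{3}\|\tilde f\|_{L^2[0,\infty)}=\tfrac{4}{3}\|f\|_{L^2[0,b]},
\]
so the operator is bounded on $L^2[0,b]$ with norm at most $4/3$; sharpness follows by taking $b\to\infty$ (or by testing on approximate extremizers of the Hardy-type inequality, concentrated near $0$, which scale invariantly and fit inside any $[0,b]$).

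I do not expect a real obstacle here: once the kernel is identified, Lemma~3.1 does all the work, and the finite-interval case is a soft zero-extension argument that relies only on the Volterra structure $\int_0^x$ of $\mathcal{I}^2$. The only calculation of substance is the explicit Beta integral yielding $4/3$.
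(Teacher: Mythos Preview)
Your proposal is correct and follows essentially the same route as the paper: identify the kernel $K(x,t)=\frac{x-t}{x^2}\chi_{\{t\le x\}}$, check homogeneity of degree $-1$, and apply Lemma~\ref{L:1.115} (Stein) to compute $\int_0^1(1-t)t^{-1/2}\,dt=\tfrac{4}{3}$. Your zero-extension argument for finite $b$ and the remark on sharpness via scaling are extra care that the paper's proof omits (it simply invokes Lemma~\ref{L:1.115} and stops), but the core idea is identical.
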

\begin{proof}
 Let
 \begin{equation}\label{E:1.102}
  K(x,t)= \left\{
\begin{array}{ll}
\frac{1}{x}\left(1-\frac{t}{x}\right), & t \leqslant x,\\
0, & t > x.\\
\end{array}
\right.
\end{equation}
Noting that $K(\lambda x, \lambda t)=\lambda^{-1} K(x,t)$ and applying
Lemma~\ref{L:1.115} to the operator $T_K=Q\mathcal I^2$, we obtain
\begin{equation}\label{E:1.111}
\|Q\mathcal I^2\|_2 = \|T_K\|_2 = \int \limits_0^{\infty}|K(1,t)|t^{-1/2}dt = \int
\limits_0^{1}(1-t)t^{-1/2}dt = \frac{4}{3}.
\end{equation}
\end{proof}
Let $H^2[0,b]$ be the Sobolev space on $[0,b]$. Assume also

\begin{equation}\label{E:1.125}
\widetilde{H}_0^{2}[0, b]=\{f \in H^2[0, b]: f(0)=f'(0)=0\}.
\end{equation}

\begin{lemma}\label{L:1.222}
If $f\in \widetilde{H}_0^2[0,1]$, then the following relations hold:
\begin{equation}\label{E:4.01}
  f(x)=\oo(x^{3/2}),\qquad
  f'(x)=\oo(x^{1/2})\qquad\text{as}\ x\to0.
\end{equation}
\end{lemma}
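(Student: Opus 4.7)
The plan is to exploit the two boundary conditions $f(0)=f'(0)=0$ together with the fact that $f''\in L^2[0,1]$ in order to rewrite $f'(x)$ and $f(x)$ as explicit integrals against $f''$, and then to apply the Cauchy--Schwarz inequality. The decay will come not from any uniform bound on $\|f''\|_{L^2}$, but from the absolute continuity of the Lebesgue integral of $|f''|^2$: the quantity $\int_0^x |f''(t)|^2\,dt$ tends to $0$ as $x\to 0$, and this is precisely what converts the $O$-estimates produced by Cauchy--Schwarz into the desired $o$-estimates.

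First I would handle $f'$. Since $f\in H^2[0,1]$ and $f'(0)=0$, one has the pointwise representation $f'(x)=\int_0^x f''(t)\,dt$. Cauchy--Schwarz then yields
\begin{equation*}
|f'(x)|\leq x^{1/2}\left(\int_0^x |f''(t)|^2\,dt\right)^{1/2},
\end{equation*}
and since $f''\in L^2[0,1]$, the factor $\left(\int_0^x |f''|^2\right)^{1/2}$ tends to $0$ as $x\to 0$, proving $f'(x)=\oo(x^{1/2})$.

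For $f$, using $f(0)=0$ and the formula just derived for $f'$, Fubini (or an integration by parts) gives
\begin{equation*}
f(x)=\int_0^x f'(t)\,dt=\int_0^x (x-t)f''(t)\,dt,
\end{equation*}
which is essentially the operator $\mathcal I^2$ applied to $f''$ from \eqref{E:I}. Applying Cauchy--Schwarz to this representation produces
\begin{equation*}
|f(x)|\leq \left(\int_0^x (x-t)^2\,dt\right)^{1/2}\left(\int_0^x |f''(t)|^2\,dt\right)^{1/2}=\frac{x^{3/2}}{\sqrt{3}}\left(\int_0^x |f''(t)|^2\,dt\right)^{1/2},
\end{equation*}
and the same absolute continuity argument upgrades $O(x^{3/2})$ to $\oo(x^{3/2})$.

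There is no real obstacle here; the only subtlety is to resist stopping at the $O$-estimates that Cauchy--Schwarz gives directly and instead notice that the trailing $L^2$-tail $\int_0^x|f''|^2\,dt\to 0$ supplies the extra little-$o$ gain. Both conclusions in \eqref{E:4.01} follow immediately, completing the proof.
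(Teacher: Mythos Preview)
Your proof is correct and follows essentially the same approach as the paper: both arguments write $f'(x)=\int_0^x f''$ and apply Cauchy--Schwarz together with the fact that $\int_0^x|f''|^2\to 0$ to get $f'(x)=\oo(x^{1/2})$. The only minor difference is in the treatment of $f$: the paper integrates the already-obtained bound $f'(t)=\oo(t^{1/2})$ over $[0,x]$, whereas you apply Cauchy--Schwarz directly to the representation $f(x)=\int_0^x (x-t)f''(t)\,dt$; this is a cosmetic variation, and arguably slightly cleaner since the $\oo$-factor comes from the same source in both estimates.
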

\begin{proof}
Since $f\in\widetilde{H}_0^2[0,1]$, then $f'(x)=\int\limits_0^xf''(t)dt$. Therefore by
the Cauchy--Bunyakovsky inequality
\begin{equation}\label{E:4.02}
  |f'(x)|^2\leqslant\left(\int\limits_0^x|f''(t)|dt\right)^2\leqslant x
  \int\limits_0^x|f''(t)|^2dt=  x\cdot \oo(1)=\oo(x)\qquad\text{as}\ x\to0,
\end{equation}
i.e. $ f'(x)=\oo(x^{1/2})$, which proves the second estimate in \eqref{E:4.01}.

 Further, since $f\in\widetilde{H}_0^2[0,1]$, we get $f(x)=\int\limits_0^xf'(t)dt$. Hence,
\begin{equation}\label{E:4.03}
  |f(x)|\leqslant\int\limits_0^x|f'(t)|dt\leqslant
  \int\limits_0^x \oo(x^{1/2})dx = \oo(x^{3/2})\quad  \text{as}\  x\to 0.
\end{equation}
The first estimate in \eqref{E:4.01} is proved.
\end{proof}

Let $D_{\min}^2$ be a minimal differential operator of the 2nd order, generated in
$L^2(0,
 b)$ by differential expression $-d^2/dx^2,$
     \begin{equation}\label{E:1.127}
\dom(D_{\min}^2)=H^2_0[0,b]=\{f \in H^2[0, b]: f(0)=f'(0)=f(b)=f'(b)=0\}.
\end{equation}

Let ${S_{\nu,b}}:={S_{\nu,b}}_{\min}$ and ${S_{\nu,b}}_\max$ are the minimal and maximal
operators, respectively, generated by the differential expression~\eqref{E:1.1} in
$L^2(0, b),\ b<\infty$.

 \begin{theorem}\label{P:1.3}
Let $\nu \in [0,1)$. Then the following assertions hold.
\begin{itemize}
  \item[(i)] The operator ${S_{\nu,b}}$ is a non-negative and its deficiency indices
  are $n_{\pm}({S_{\nu,b}})=2$.
\item[(ii)] The domain of the operator ${S_{\nu,b}}$ is given by $$\dom({S_{\nu,b}})=H_0^{2}[0, b].$$
\item[(iii)] ${S_{\nu,b}}_{\max}={S^*_{\nu,b}}$ and
\begin{equation}\label{E:1.118}
\dom({S^*_{\nu,b}})= \left\{
\begin{array}{ll}
  \widetilde{H}_0^{2}[0,b]\dotplus\spann \{x^{1/2+\nu}, x^{1/2-\nu}\}, & \nu \in(0,1), \\
  \widetilde{H}_0^{2}[0,b]\dotplus\spann \{x^{1/2}, x^{1/2}\log(x)\}, & \nu=0.
\end{array}
\right.
\end{equation}
\end{itemize}
\end{theorem}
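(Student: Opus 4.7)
The plan is to dispatch the three assertions in order, with Lemma~\ref{L:1.2} (boundedness of $Q\mathcal I^2$) and the Bessel asymptotics of Section~2.3 as the principal analytic inputs.

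For (i), I would obtain non-negativity from the formal factorisation
\[
\tau_\nu = \ell_\nu^*\,\ell_\nu, \qquad \ell_\nu := -\tfrac{d}{dx}+\tfrac{\nu+1/2}{x},
\]
which is verified by direct differentiation and yields $(\tau_\nu f,f)=\|\ell_\nu f\|^2\ge 0$ for every $f\in C_0^\infty(0,b)$ (equivalently, integration by parts gives $(\tau_\nu f,f)=\|f'\|^2+(\nu^2-\tfrac14)\|f/x\|^2$, which is non-negative by Hardy's inequality when $\nu<1/2$). For the deficiency indices, the fundamental solutions of $\tau_\nu y=\pm iy$ are $x^{1/2}J_{\nu}(x\sqrt{\pm i})$ and $x^{1/2}Y_{\nu}(x\sqrt{\pm i})$; by the asymptotics \eqref{Bessel_0}--\eqref{Bessel_Y}, under $\nu\in[0,1)$ both are square-integrable near $0$ (the limit-circle alternative), and near the regular endpoint $b$ all solutions are square-integrable. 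Hence $\dim\ker(S_{\nu,b}^*\mp iI)=2$.

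For (ii), the key estimate comes from Lemma~\ref{L:1.2}. Every $f\in H_0^2[0,b]$ satisfies $f(0)=f'(0)=0$, so $f=\mathcal I^2 f''$ pointwise on $(0,b)$ and therefore
\[
\Bigl\|\tfrac{\nu^2-1/4}{x^2}\,f\Bigr\|_{L^2} \;=\; |\nu^2-\tfrac14|\,\|Q\mathcal I^2 f''\|_{L^2} \;\le\; \tfrac43\,|\nu^2-\tfrac14|\,\|f''\|_{L^2},
\]
with $\kappa_\nu:=\tfrac43|\nu^2-\tfrac14|$ strictly less than $1$ on $[0,1)$. Consequently $\tau_\nu\colon H_0^2[0,b]\to L^2[0,b]$ is continuous in the $H^2$-norm; approximating $f\in H_0^2[0,b]$ by $f_n\in C_0^\infty(0,b)$ in $H^2$ (standard density) yields $\tau_\nu f_n\to\tau_\nu f$ in $L^2$, so $H_0^2[0,b]\subseteq\dom(S_{\nu,b})$. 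Conversely, any Cauchy sequence $(f_n)\subset C_0^\infty(0,b)$ in the graph norm of $\tau_\nu$ satisfies
\[
\|f_n''-f_m''\|_{L^2}\;\le\; \|\tau_\nu f_n-\tau_\nu f_m\|_{L^2} + \kappa_\nu\,\|f_n''-f_m''\|_{L^2},
\]
which rearranges (since $\kappa_\nu<1$) to show that $(f_n'')$ is Cauchy in $L^2$; combined with $f_n\to f$ in $L^2$ this forces $f\in H^2[0,b]$, and the boundary conditions $f(0)=f'(0)=f(b)=f'(b)=0$ pass to the limit by continuity of the Sobolev trace.

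For (iii), the inclusion $(\supseteq)$ follows from the same bound (on $\widetilde H_0^2[0,b]$ only the conditions at $0$ are needed, which is precisely what Lemma~\ref{L:1.2} requires) together with direct verification that $x^{1/2\pm\nu}$ — respectively $x^{1/2}$ and $x^{1/2}\log x$ when $\nu=0$ — solve $\tau_\nu y=0$ and lie in $L^2(0,b)$ for $\nu\in[0,1)$. Directness of the sum uses Lemma~\ref{L:1.222}: every $f\in\widetilde H_0^2[0,b]$ obeys $f(x)=\oo(x^{3/2})$ at $0$, whereas any non-trivial combination of the special functions has leading behaviour $x^{1/2-\nu}$ (or $x^{1/2}\log x$ when $\nu=0$) and cannot decay that fast. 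The reverse inclusion $(\subseteq)$ is then forced by a dimension count: by (i), $\dim(\dom(S_{\nu,b}^*)/\dom(S_{\nu,b})) = n_++n_- = 4$; on the right of \eqref{E:1.118}, $\widetilde H_0^2[0,b]/H_0^2[0,b]$ has dimension $2$ (parametrised by $f(b), f'(b)$) and the span of the two special functions has dimension $2$, so both subspaces of $\dom(S_{\nu,b}^*)$ have codimension $4$ over $\dom(S_{\nu,b})$; since one is contained in the other, they coincide. The main obstacle is the sharp relative-bound step in (ii): the $\tfrac43$ in Lemma~\ref{L:1.2} is optimal and $\tfrac43|\nu^2-\tfrac14|\uparrow 1$ as $\nu\uparrow 1$, so everything rests on the strict inequality $\nu<1$; apart from that, the rest is a routine application of the extension-theoretic machinery.
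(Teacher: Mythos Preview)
Your argument is correct and rests on the same analytic core as the paper's: the relative bound $\|x^{-2}f\|_2\le\tfrac43\|f''\|_2$ from Lemma~\ref{L:1.2}, combined with the sharp inequality $\tfrac43|\nu^2-\tfrac14|<1$ on $[0,1)$, and then a dimension count via von Neumann's formula for (iii).

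The organisational difference is in parts (i)--(ii). The paper treats them in one stroke by invoking the Kato--Rellich theorem: once the potential $(\nu^2-\tfrac14)x^{-2}$ is shown to be $D_{\min}^2$-bounded with bound $<1$, that theorem simultaneously yields $\dom(S_{\nu,b})=\dom(D_{\min}^2)=H_0^2[0,b]$ and preservation of deficiency indices $n_\pm(S_{\nu,b})=n_\pm(D_{\min}^2)=2$. You instead unbundle these: you reproduce the Kato--Rellich domain argument by hand (the Cauchy-sequence step), and you compute the deficiency indices independently via the limit-circle criterion at $0$ using the explicit Bessel solutions. Both routes are sound; the paper's is shorter, while yours makes the Sturm--Liouville structure (and the role of the asymptotics \eqref{Bessel_0}--\eqref{Bessel_Y}) visible for the index count and does not appeal to an external perturbation theorem. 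Your treatment is also more explicit on two points the paper leaves tacit: the non-negativity (via the factorisation $\tau_\nu=\ell_\nu^*\ell_\nu$) and the directness of the sum in \eqref{E:1.118} (via Lemma~\ref{L:1.222}).
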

\begin{proof}
 {(i)--(ii)} 
%
The function $u \in \widetilde{H}_0^{2}[0,b]$  admits the integral representation
$u(x)=\int \limits_0^{x}(x-t) u''(t)dt.$ Therefore,
\begin{equation}\label{E:1.11}
Q u(x)=\frac{1}{x^2}u(x)=\frac{1}{x^2}\int \limits_0^{x}(x-t) u''(t)dt=(Q\mathcal
I^2(D_{\min}^2u))(x).
\end{equation}
 By virtue of Lemma~\ref{L:1.2}, this yields
\begin{eqnarray}\label{E:300.43}
  \|Q u\|_{2}&=&\left\|\frac{1}{x^2}u\right\|_{2}=\left\|Q\mathcal I^2D_{\min}^2 u\right\|_{2}
  \leqslant \|Q\mathcal I^2\|_2 \cdot\|D_{\min}^2 u\|_{2} \nonumber\\
&=&\frac{4}{3}\|D_{\min}^2u\|_{2} \leqslant\frac{4}{3}\|u\|_{H^2_0[0,b]}.
     \end{eqnarray}
It is easy to see that $\nu^2-\frac{1}{4}$ admits the representation
$\nu^2-\frac{1}{4}=\frac{3}{4}(1-\varepsilon)$, where $\varepsilon >0.$ Then
relation~\eqref{E:300.43} implies the estimate
\begin{equation}\label{E:300.44}
\left\|\left(\nu^2-\frac{1}{4}\right)
  Qu\right\|_{2}=\left|\nu^2-\frac{1}{4}\right|\cdot\|Qu\|_{2}\leqslant\frac{3}{4}(1-\varepsilon)
  \cdot\frac{4}{3}\|u\|_{H^2_0[0,b]}
  =(1-\varepsilon)\|u\|_{H^2_0[0,b]}, \quad u\in H^2_0[0,b].
\end{equation}
 Estimate~\eqref{E:300.44} means that $Q$ is strongly $D_{\min}^2$-bounded. Therefore,
 by the Kato--Rellich theorem (see \cite{Ka})
$n_{\pm}({S_{\nu,b}})=n_{\pm}(D_{{\min}}^2)=2$ and $\dom({S_{\nu,b}})=H_0^{2}[0,b]$.

{{(iii)}} Since
$$
\tau_\nu x^{1/2\pm\nu}=0,
$$
where the equality is understood in the meaning of the theory of distributions, and
$x^{1/2\pm\nu} \in L^2(0, b)$, then
$$\{x^{1/2+\nu}, x^{1/2-\nu}\} \subset\dom({S_{\nu,b}}_{\max})=\dom({S^*_{\nu,b}}),$$
and  $\ker({S^*_{\nu,b}})=\{x^{1/2+\nu}, x^{1/2-\nu}\}$ $\subset L^2(0, b)$. In
addition, it is clear that $\widetilde{H}_0^2[0,b] \subset \dom({S^*_{\nu,b}})$ and
$\dim(\widetilde{H}_0^2[0,b])/$ $\dom({S_{\nu,b}}))=2.$ On the other hand, since
$n_{\pm}({S_{\nu,b}})=2,$ we have \linebreak$\dim(\dom({S^*_{\nu,b}})/\dom({S_{\nu,b}}))
= 2n_{\pm}({S_{\nu,b}})=4$ by the first Neumann formula. Therefore,
formula~\eqref{E:1.118} is valid.
    \end{proof}

 Consider the quadratic form $\mathfrak{s}'_{\nu;b}$ associated with the operator
  ${S_{\nu,b}},$
\begin{equation}\label{E:1.131}
\mathfrak{s}'_{\nu;b}[u]:=({S_{\nu,b}}u, u), \qquad
\dom(\mathfrak{s}'_{\nu;b})=\dom({S_{\nu,b}})=H^2_0[0,b].
\end{equation}

It is clear that $S_{1/2,b}=-D_{\min}^2$.
\begin{theorem}\label{P:1.8}
Let $\nu\in[0,1)$ and ${S_{\nu,b}}_F$ be the Friedrichs extension of the operator
${S_{\nu,b}}$. Also assume $\xi\in C_0^1[0,b]$ such that $\xi(x)=1$ for $x\in[0,b/2]$
and $\xi(b)=0$. Then:

 $(i)$ For $\nu\in(0,1)$ the quadratic form $\mathfrak{s}_{\nu,b}$ quadratic form
 associated with  the Friedrichs extension ${S}_{{\nu,b}_F}$ takes the form
\begin{equation}\label{E:1.100}
\mathfrak{s}_{\nu,b}[u]=\int \limits_0^b|u'(x)|^2dx+\left(\nu^2-\frac{1}{4}\right)\int
\limits_0^b\frac{|u(x)|^2}{x^2}dx,
\end{equation}
\begin{equation}\label{E:1.100a}
\dom(\mathfrak{s}_{\nu,b}) = H^1_0[0,b].
\end{equation}

$(ii)$ For $\nu=0$ the quadratic form $\mathfrak{s}_{0,b}$ quadratic form
 associated with  the Friedrichs extension ${S}_{{0,b}_F}$ takes the form
\begin{equation}\label{E:1.100y}
\mathfrak{s}_{0,b}[u]=\int \limits_0^b\left|u'(x)-\frac{u(x)}{2x}\right|^2dx,
\end{equation}

\begin{equation}\label{E:1.100aa}
 \dom(\mathfrak{s}_{0,b})\supset H^1_0[0,b]\dot{+}
                                     \mathrm{span}\{x^{\frac{1}{2}}(x-b),x^{\frac{1}{2}}\xi(x)\}.
\end{equation}
Wherein $\dim\left(\mathrm{dom}(\mathfrak{s}_{0,b})\diagup {H}_0^1[0,b]\right)=\infty.$

$(iii)$ The domain of the Friedrichs extension ${S_{\nu,b}}_F$ of the operator
${S_{\nu,b}}$
 takes the form
\begin{equation}\label{fr_ext}
\dom({S_{\nu,b}}_F)=\left\{
\begin{array}{ll}
  H_0^{2}[0,b]\dotplus\spann\{x^{1/2+\nu}(x-b), x^2(x-b)\}, & \nu\in(0,1), \\
  H_0^{2}[0,b]\dotplus\spann\{x^{1/2}(x-b), x^{1/2}\xi(x)\}, & \nu=0.
\end{array}
\right.
\end{equation}
\end{theorem}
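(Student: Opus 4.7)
My plan is to identify the Friedrichs form as the $L^2$-closure of $\mathfrak s'_{\nu,b}[u]:=(S_{\nu,b}u,u)$ on $H_0^2[0,b]$, and then deduce (iii) from the general characterization $\dom(S_{\nu,b\,F})=\dom(S_{\nu,b}^*)\cap\dom(\mathfrak s_{\nu,b})$ of the Friedrichs extension in terms of its form. The common starting point is integration by parts: for $u\in H_0^2[0,b]$ both $u$ and $u'$ vanish at $0,b$, so the boundary terms in $(-u'',u)_{L^2}$ drop out and
$$\mathfrak s'_{\nu,b}[u]=\int_0^b|u'(x)|^2\,dx+\Bigl(\nu^2-\tfrac14\Bigr)\int_0^b\frac{|u(x)|^2}{x^2}\,dx.$$
For $\nu=0$, expanding the square in $\int_0^b|u'-u/(2x)|^2\,dx$ and integrating by parts in the cross term, which is legitimate because $|u(x)|^2/x\to 0$ as $x\to 0^+$ by Lemma~\ref{L:1.222}, rewrites this as $\int_0^b|u'-u/(2x)|^2\,dx$, yielding the completion-of-square form \eqref{E:1.100y}.

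For part (i), $\nu\in(0,1)$: the sharp Hardy inequality $\int_0^b|u|^2/x^2\,dx\le 4\int_0^b|u'|^2\,dx$ on $H_0^1[0,b]$ gives $\mathfrak s'_{\nu,b}[u]\ge 4\nu^2\|u'\|^2_{L^2}$ when $\nu\in(0,\tfrac12)$ and $\mathfrak s'_{\nu,b}[u]\ge\|u'\|^2_{L^2}$ when $\nu\in[\tfrac12,1)$. Coupled with the converse bound $\mathfrak s'_{\nu,b}[u]\le(1+|4\nu^2-1|)\|u'\|^2_{L^2}$, the form norm is equivalent on $H_0^2[0,b]$ to the $H^1$-norm; since $H_0^2[0,b]$ is $H^1$-dense in $H_0^1[0,b]$, the closure of $\mathfrak s'_{\nu,b}$ has domain $H_0^1[0,b]$, proving \eqref{E:1.100}--\eqref{E:1.100a}.

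For part (ii), $\nu=0$: direct computation gives $v_1'-v_1/(2x)=x^{1/2}$ for $v_1(x):=x^{1/2}(x-b)$ and $v_2'-v_2/(2x)=x^{1/2}\xi'(x)$ for $v_2(x):=x^{1/2}\xi(x)$, both bounded and hence in $L^2(0,b)$; neither $v_j$ lies in $H_0^1[0,b]$ since $x^{1/2}\notin H^1[0,b]$. Truncating $v_j\mapsto\eta_\varepsilon v_j$ with a smooth cutoff $\eta_\varepsilon$ that vanishes on $[0,\varepsilon]$ and equals $1$ on $[2\varepsilon,b]$ produces $H_0^2$-approximants whose $\mathfrak s'_{0,b}$-error tends to $0$ as $\varepsilon\to 0$, putting $v_1,v_2\in\dom(\mathfrak s_{0,b})$. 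For the infinite-codimension claim I will use the family $w_s(x):=x^{1/2}(\log(2b/x))^s\xi(x)$, $s\in(-\tfrac12,\tfrac12)$: the identity $w_s'-w_s/(2x)=-sx^{-1/2}(\log(2b/x))^{s-1}\xi(x)+x^{1/2}(\log(2b/x))^s\xi'(x)$ shows via the substitution $y=\log(2b/x)$ that the first term is in $L^2$ iff $s<\tfrac12$, while $w_s\notin H^1[0,b]$ whenever $s>-\tfrac12$; distinct values of $s$ yield cosets mod $H_0^1[0,b]$ with distinct leading asymptotics at $0$, hence linear independence and infinite-dimensional quotient.

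For part (iii), I intersect $\dom(S_{\nu,b}^*)$ from Theorem~\ref{P:1.3}(iii) with $\dom(\mathfrak s_{\nu,b})$. For $\nu\in(0,1)$, writing $u=f+c_1 x^{1/2+\nu}+c_2 x^{1/2-\nu}$ with $f\in\widetilde H_0^2[0,b]$, membership in $H_0^1[0,b]$ forces $c_2=0$ (since $(x^{1/2-\nu})'\notin L^2(0,b)$ for $\nu\ge0$) and $u(b)=0$, i.e.\ $f(b)=-c_1 b^{1/2+\nu}$. Writing $u=h+a_1 x^{1/2+\nu}(x-b)+a_2 x^2(x-b)$, the condition $h\in H^2$ near $0$ (no $x^{1/2+\nu}$ singularity) forces $a_1=-c_1/b$, and then $h(b)=0$ holds automatically while $h'(b)=0$ uniquely determines $a_2$; this yields the first line of \eqref{fr_ext}. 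The $\nu=0$ case runs in parallel, using $(x^{1/2}\log x)'-(x^{1/2}\log x)/(2x)=x^{-1/2}\notin L^2$ to force the log-coefficient to vanish and decomposing via $\{x^{1/2}(x-b),\,x^{1/2}\xi(x)\}$. The main obstacle will be part (ii): the cutoff estimate and the infinite-codimension construction via logarithmic perturbations both require delicate asymptotic control near $0$. Once (i) and (ii) are in place, (iii) reduces to elementary linear algebra.
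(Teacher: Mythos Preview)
Your overall strategy coincides with the paper's: Hardy's inequality for (i), direct evaluation of the form on the two test functions for (ii), and the identification $\dom(S_{\nu,b\,F})=\dom(S_{\nu,b}^*)\cap\dom(\mathfrak s_{\nu,b})$ for (iii). You actually supply more detail than the paper does: the paper states the infinite-codimension claim in (ii) without proof and dispatches (iii) in a single line, whereas you carry out both computations explicitly. Your logarithmic family $w_s$ is the finite-interval analogue of the family $x^{1/2}|\log x|^{-\alpha}\xi(x)$ that the paper uses later for the half-line in Theorem~\ref{P:F_def}(ii).

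There is, however, one step in (ii) that would fail as written. The linear-scale cutoff $\eta_\varepsilon$ (vanishing on $[0,\varepsilon]$, equal to $1$ on $[2\varepsilon,b]$) does \emph{not} yield form-norm approximants: the commutator term $\eta_\varepsilon' v_j$ has $|\eta_\varepsilon'|\lesssim\varepsilon^{-1}$ and $|v_j|\asymp x^{1/2}$ near $0$, so
\[
\int_\varepsilon^{2\varepsilon}|\eta_\varepsilon' v_j|^2\,dx \;\asymp\; \varepsilon^{-2}\int_\varepsilon^{2\varepsilon} x\,dx \;\asymp\; 1,
\]
which is bounded but not $o(1)$. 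A logarithmic cutoff is needed: take $\eta_\varepsilon$ interpolating between $0$ and $1$ on $[\varepsilon,\sqrt\varepsilon\,]$ with $|\eta_\varepsilon'(x)|\lesssim (x\,|\log\varepsilon|)^{-1}$; then the commutator contributes $O(|\log\varepsilon|^{-1})\to 0$. You should also cut off near $x=b$ so as to land in $H_0^2[0,b]$, since $(\eta_\varepsilon v_1)'(b)=v_1'(b)=b^{1/2}\neq 0$; that endpoint is regular, so this second modification is routine. (The paper sidesteps this entirely, computing $\mathfrak s_{0,b}[v_j]<\infty$ and declaring $v_j\in\dom(\mathfrak s_{0,b})$ without addressing approximation.) A minor aside in (iii): your argument ``$(x^{1/2-\nu})'\notin L^2$ forces $c_2=0$'' breaks at $\nu=1/2$, where $x^{1/2-\nu}\equiv 1$; there $c_2=0$ instead follows from $u(0)=0$.
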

\begin{proof}
(i) By Hardy's inequality for $\nu\in(0,1)$ and $u\in H_0^1[0,b]$
\begin{equation}\label{E:Kvad_form11}
\frak s_{\nu,b}[u]=\|u'(t)\|_{2}^2+
(\nu^2-1/4)\int\limits_0^b\frac{|u(t)|^2}{t^2}dt\leqslant \|u'(t)\|_{2}^2(1+|4\nu^2-1|).
\end{equation}
Thus $H_0^1[0,b]\subset\dom(\mathfrak{s}_{\nu,b})$.

We prove the converse inequality. Suppose first that $\nu\in[1/2,1)$. Then
\begin{equation}\label{E:Har_11}
\frak s_{\nu,b}[u]=\|u'(t)\|_{2}^2+
(\nu^2-1/4)\int\limits_0^b\frac{|u(t)|^2}{t^2}dt\geqslant \|u'(t)\|_{2}^2, \quad u\in
H_0^1[0,b].
\end{equation}

If $\nu\in(0,1/2)$, then for $u\in H_0^1[0,b]$ applying the Hardy's inequality we obtain
\begin{equation}\label{E:Kvad_form2}
\frak s_{\nu,b}[u]=\|u'(t)\|_{2}^2- (1/4-\nu^2)\int\limits_0^b\frac{|u(t)|^2}{t^2}dt
\geqslant \|u'(t)\|_{2}^2+(4\nu^2-1)\|u'(t)\|_{2}^2=4\nu^2\|u'(t)\|_{2}^2.
\end{equation}

So on $H_0^1[0,b]$ the energy norm of ${S_{\nu,b}}$ is equivalent to the norm of space
$H_0^1[0,b]$. Since $H_0^2[0,b]=\dom({S_{\nu,b}})$ is dense in the energy space of the
operator ${S_{\nu,b}}$, then $\dom(\frak s_{\nu,b})$ and $H_0^1[0,b]$ coincide
algebraically and topologically.

(ii) Let $u_1(x)=x^{1/2}(x-b)$ and $u_2(x)=x^{1/2}\xi(x)$ then
$$
\mathfrak{s}_{0,b}[u_1]=\int \limits_0^b xdx<\infty,\qquad\quad
\mathfrak{s}_{0,b}[u_2]=\int \limits_0^b x(\xi'(x))^2dx<\infty.
$$
So $\{x^{1/2}(x-b),x^{1/2}\xi(x)\}\subset\dom(\mathfrak{s}_{0,b})$.

 (iii) We note that $H_0^2[0,b]\subset H_0^1[0,b]$. If $u(x)=x^{1/2+\nu}(x-b)$
then $u'(\cdot)\in L^2(0,b)$, but $u(\cdot)\not\in\dom({S_{\nu,b}})=H_0^{2}[0, b]$. By
the construction of the Friedrichs extension and the equalities \eqref{E:1.118}, we
obtain
        \begin{equation*}
\begin{gathered}
\dom({S_{\nu,b}}_F)=\dom({S^*_{\nu,b}})\cap
\dom(\mathfrak{s}_{\nu,b})=\dom({S^*_{\nu,b}})
\cap H^1_0[0,b]=\\
=H_0^{2}[0, b]\dotplus \spann\{x^{1/2}(x-b), x^{1/2}\xi(x)\}.
\end{gathered}
\end{equation*}

The case $\nu=0$ is considered similarly.
\end{proof}

The case $\nu\in[0,1/\sqrt{2})$ in Proposition~\ref{P:1.8} can be treated by means of
KLMN--theorem.  Therefore, applying Hardy's inequality for  one gets
      \begin{equation}\label{E:1.300}
\left|\left(\nu^2-\frac{1}{4}\right)\int \limits_0^b\frac{|u(x)|^2}{x^2}dx\right|
\leqslant 4\left|\nu^2-\frac{1}{4}\right|\int \limits_0^{b}|u'|^2dx\leqslant
(1-\varepsilon)\mathfrak{t}_{D^2_{\min}}[u],\qquad u\in H^1_0[0,b].
   \end{equation}

Hence, the form $\left(\nu^2-\frac{1}{4}\right)\mathfrak{q}$ is strongly
$\mathfrak{t}_{D^2_{\min}}$-bounded, where $\frak q[u]:=\int
\limits_0^b\frac{|u(x)|^2}{x^2}dx$. By the KLMN-theorem~\cite{Ka}
$\dom(\mathfrak{s}_{\nu,b})=\dom(\mathfrak{t}_{D^2_{\min}})=H^1_0[0,b].$

This argument was already used in our previous paper~\cite{AA}.


\section{Bessel operator ${A_{\nu,b}}$ on the interval}

Here, we consider the Bessel operator ${A_{\nu,b}}$ generated by the differential
expression~\eqref{E:1.1} in $L^2(0, b)$  with the domain
\begin{equation}\label{Dirihle}
\dom({A_{\nu,b}})=\{f\in \mathrm{dom}(S^*_{\nu,b}):f(0)=f'(0)=f(b)=0\},\qquad
\nu\in[0,1).
\end{equation}
\begin{theorem}\label{P:10}
Let $\nu\in[0,1)$. The following assertions hold:

$(i)$ The operator ${A_{\nu,b}}$ has equal deficiency indices $n_{\pm}({A_{\nu,b}})=1$;

$(ii)$ $\mathrm{dom}(A_{\nu,b})=\{f\in H^2[0,b]:f(0)=f'(0)=f(b)=0\};$

$(iii)$  $\mathrm{dom}(A^*_{\nu,b})=\{f\in \mathrm{dom}(S^*_{\nu,b}):f(b)=0\}.$
\end{theorem}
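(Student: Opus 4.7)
My plan is to establish (ii) first, derive (i) from it, and then obtain (iii) via the second Green identity. The guiding principle is that the boundary conditions $f(0) = f'(0) = 0$ in \eqref{Dirihle} eliminate the singular Frobenius solutions appearing in the decomposition of Theorem~\ref{P:1.3}(iii).

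For (ii), write $V_\nu := \spann\{x^{1/2+\nu}, x^{1/2-\nu}\}$ for $\nu \in (0,1)$ and $V_0 := \spann\{x^{1/2}, x^{1/2}\log x\}$, so that $\dom({S^*_{\nu,b}}) = \widetilde{H}^2_0[0,b] \dotplus V_\nu$. A direct computation of the second derivative shows that no nonzero element of $V_\nu$ lies in $H^2[0,b]$ (the required $L^2$ integrability fails at $0$ for every $\nu \in [0,1)$). Hence the conditions $f(0) = f'(0) = 0$ in \eqref{Dirihle} force the $V_\nu$-component of $f$ to vanish, yielding $f \in \widetilde{H}^2_0[0,b]$. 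Combining with $f(b) = 0$ and the identity $\widetilde{H}^2_0[0,b] \cap \{f(b) = 0\} = \{f \in H^2[0,b]: f(0) = f'(0) = f(b) = 0\}$ gives (ii).

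For (i), by (ii) the domain $\dom({A_{\nu,b}})$ differs from $\dom({S_{\nu,b}}) = H^2_0[0,b]$ only by dropping the condition $f'(b) = 0$, so $\dim(\dom({A_{\nu,b}})/\dom({S_{\nu,b}})) = 1$. Symmetry of ${A_{\nu,b}}$ follows from the second Green identity
\[
({S^*_{\nu,b}} f, g) - (f, {S^*_{\nu,b}} g) = [f, g]_b - \lim_{x \to 0^+} [f, g]_x, \qquad f, g \in \dom({A_{\nu,b}}).
\]
The term at $b$ vanishes since $f(b) = g(b) = 0$. The limit at $0$ vanishes by Lemma~\ref{L:1.222}: $f(x) = \oo(x^{3/2})$, $f'(x) = \oo(x^{1/2})$ (and likewise for $g$), so both products in $[f, g]_x$ are $\oo(x^2)$. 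Hence ${A_{\nu,b}}$ is a closed symmetric one-dimensional extension of ${S_{\nu,b}}$, and by von Neumann's formula $n_\pm({A_{\nu,b}}) = n_\pm({S_{\nu,b}}) - 1 = 1$.

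For (iii), the inclusions ${S_{\nu,b}} \subset {A_{\nu,b}} \subset {S^*_{\nu,b}}$ yield ${S_{\nu,b}} \subset A^*_{\nu,b} \subset {S^*_{\nu,b}}$, so $f \in \dom(A^*_{\nu,b})$ iff $f \in \dom({S^*_{\nu,b}})$ and the Green form $[f,g]_b - \lim_{x\to 0^+}[f,g]_x$ vanishes for every $g \in \dom({A_{\nu,b}})$. At $0$: the $V_\nu$-component of $f$ satisfies $f(x) = O(x^{1/2-\nu})$ and $f'(x) = O(x^{-1/2-\nu})$ (with logarithmic corrections when $\nu = 0$), while Lemma~\ref{L:1.222} gives $g(x) = \oo(x^{3/2})$ and $g'(x) = \oo(x^{1/2})$, so $f(x)\overline{g'(x)}$ and $f'(x)\overline{g(x)}$ are both $\oo(x^{1-\nu}) \to 0$. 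At $b$, since $g(b) = 0$ while $g'(b)$ ranges freely over $\mathbb C$ inside $\dom({A_{\nu,b}})$, the identity $[f,g]_b = f(b)\overline{g'(b)}$ vanishes for all such $g$ iff $f(b) = 0$, which gives (iii). The principal delicate point is the uniform $x \to 0^+$ asymptotic analysis in (iii) across $\nu \in [0,1)$, in particular the $\nu = 0$ case, which requires a separate but entirely analogous estimation involving the $x^{1/2}\log x$ basis function.
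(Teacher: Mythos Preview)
The paper's own proof is much shorter than yours: it argues only (i), observing that ${S_{\nu,b}}\subset {A_{\nu,b}}\subset {S^*_{\nu,b}}$ with $\dim\bigl(\dom(A_{\nu,b})/\dom(S_{\nu,b})\bigr)=1$, and then invoking $n_\pm(S_{\nu,b})=2$ together with the second von~Neumann formula. Parts (ii) and (iii) receive no separate argument; the paper effectively treats (ii) as the operative definition of $\dom(A_{\nu,b})$ and leaves (iii) implicit.

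Your proof of (i) matches the paper's, and your direct Green-identity derivation of (iii) is correct and more explicit than anything the paper offers. However, your argument for (ii) contains a genuine gap. The sentence ``Hence the conditions $f(0)=f'(0)=0$ in \eqref{Dirihle} force the $V_\nu$-component of $f$ to vanish'' does not follow from the preceding fact that $V_\nu\cap H^2[0,b]=\{0\}$. For $\nu\in(1/2,1)$ the function $\phi_+(x)=x^{1/2+\nu}$ has exponent in $(1,3/2)$, so $\phi_+(0)=\phi_+'(0)=0$ in the ordinary pointwise sense, yet $\phi_+\in V_\nu\setminus\{0\}$ and $\phi_+''\notin L^2(0,b)$. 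Thus, read literally as pointwise boundary conditions, \eqref{Dirihle} gives a set strictly larger than the one in (ii) when $\nu\in(1/2,1)$. (Your reasoning does go through for $\nu\in[0,1/2)$, since there every nonzero element of $V_\nu$ has an unbounded first derivative at~$0$.)

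This is less a defect of the theorem than an ambiguity in the paper's own definition \eqref{Dirihle}, which the paper quietly sidesteps. The clean resolution is to take (ii) itself as the definition of $\dom(A_{\nu,b})$---which is precisely what the paper's one-line proof presupposes---after which your arguments for (i) and (iii) (both of which rely on $\dom(A_{\nu,b})\subset\widetilde{H}^2_0[0,b]$ via Lemma~\ref{L:1.222}) stand as written.
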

\begin{proof}
 It is easily seen that ${S_{\nu,b}}\subset {A_{\nu,b}}\subset {S^*_{\nu,b}}$ and
$\dim(\dom({A_{\nu,b}})/\dom({S_{\nu,b}}))=1$. But, by Proposition~\ref{P:1.3},
$n_\pm({S_{\nu,b}})=2$. Hence, by the Second Neumann formula implies
$n_\pm({A_{\nu,b}})=1$.
\end{proof}
Later on branch of the function $z^\nu$ selected in the plane $\mathbb C$ with a cut
along the positive half--line $\mathbb R_+$ so $z^\nu=x^\nu$ for $z=x>0.$
\begin{proposition}\label{P:11}
Let $\nu\in[0,1)$ and $b<\infty$. Also assume that ${A_{\nu,b}}$  be the Bessel operator
generated by the expression~\eqref{E:1.1} in $L^2(0, b)$  with the domain
\eqref{Dirihle}. Then:
\begin{itemize}
\item[(i)]
Boundary triplet of the operator ${A^*_{\nu,b}}$ can be selected in the form of
\begin{equation}\label{triple}
\mathcal{H}=\mathbb{C}, \quad \Gamma_0^{\nu,b}f=[f, x^{\frac{1}{2}+\nu}]_0, \quad
\Gamma_1^{\nu,b}f=\left\{
\begin{array}{ll}
-(2\nu)^{-1}[f, x^{\frac{1}{2}-\nu}]_0,& \nu\in(0,1),\\
 \ \ [f, x^{\frac{1}{2}}\log (x)]_0,& \nu=0.
\end{array}
\right.
\end{equation}
\item[(ii)] The Weyl function $M_{\nu;b}(\cdot)$ corresponding to the boundary triplet~
\eqref{triple} has the form:
\begin{equation}\label{Weyl_fun}
M_{\nu;b}(z)=\left\{
\begin{array}{ll}
 -\frac{\Gamma(1-\nu)}{2\nu4^{\nu}\Gamma(1+\nu)}\cdot\frac{J_{-\nu}(b\sqrt{z})}{J_{\nu}(b\sqrt{z})}\cdot
z^{\nu},& \nu\in(0,1), \\
 -\log\left(\frac{\sqrt{z}}{2}\right)+\frac{\pi}{2}\frac{Y_0(b\sqrt{z})}{J_0(b\sqrt{z})}-\gamma,&
 \nu=0,
\end{array}
\right.
\end{equation}
where $\gamma$ is Euler's constant.
\end{itemize}
\end{proposition}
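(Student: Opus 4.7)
The plan is to verify Definition~\ref{D:3.1} directly for part (i), and in part (ii) to compute the Weyl function as $M_{\nu;b}(z) = \Gamma_1^{\nu,b} f_z/\Gamma_0^{\nu,b} f_z$ on a chosen generator $f_z$ of the one-dimensional defect subspace $\mathfrak{N}_z$ (one-dimensional by Theorem~\ref{P:10}(i)).

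For part (i), I would use the decomposition \eqref{E:1.118} to write any $f \in \dom(A^*_{\nu,b})$ as $f = f_0 + c_1^f \phi_1 + c_2^f \phi_2$, with $f_0 \in \widetilde{H}_0^2[0,b]$ and $(\phi_1, \phi_2) = (x^{1/2+\nu}, x^{1/2-\nu})$ for $\nu \in (0,1)$ or $(x^{1/2}, x^{1/2}\log x)$ for $\nu = 0$. Lemma~\ref{L:1.222} gives $f_0(x) = \oo(x^{3/2})$ and $f_0'(x) = \oo(x^{1/2})$, which immediately forces $[f_0, \phi_i]_0 = 0$ in all cases. A direct computation then yields $[\phi_i, \phi_i]_x \equiv 0$ together with
\begin{equation*}
  [x^{1/2+\nu}, x^{1/2-\nu}]_x \equiv -2\nu \quad (\nu \in (0,1)), \qquad [x^{1/2}, x^{1/2}\log x]_x \equiv 1,
\end{equation*}
so that $\Gamma_0^{\nu,b}$ and $\Gamma_1^{\nu,b}$ are well-defined on $\dom(A^*_{\nu,b})$ and, after the stated normalization, identify (up to sign) with $c_2^f$ and $c_1^f$. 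Two integrations by parts give $(A^*_{\nu,b}f, g) - (f, A^*_{\nu,b}g) = [f,g]_b - [f,g]_0$; the term at $b$ vanishes because $f(b) = g(b) = 0$, and expanding $[f,g]_0$ in the basis matches the right-hand side of \eqref{E:3.2}. Surjectivity of $\Gamma = (\Gamma_0^{\nu,b}, \Gamma_1^{\nu,b})^\top$ then follows by picking $c_1^f, c_2^f$ to realize any prescribed $(a_0, a_1) \in \mathbb{C}^2$ and correcting by an element of $\widetilde{H}_0^2[0,b]$ to enforce $f(b) = 0$.

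For part (ii), I would start from the general solution of \eqref{Bes_eq} together with \eqref{E:Y_v}, and take the unique (up to scalar) solution vanishing at $b$:
\begin{equation*}
  f_z(x) = x^{1/2}\bigl[J_{-\nu}(b\sqrt{z})\, J_\nu(x\sqrt{z}) - J_\nu(b\sqrt{z})\, J_{-\nu}(x\sqrt{z})\bigr]\qquad (\nu \in (0,1)),
\end{equation*}
with $(J_\nu, J_{-\nu})$ replaced by $(J_0, Y_0)$ for $\nu = 0$. The recurrences \eqref{E:reccur+}--\eqref{E:reccur-} rewrite the Lagrange brackets $[x^{1/2} J_{\pm\nu}(x\sqrt{z}), x^{1/2\pm\nu}]_x$ as explicit Bessel expressions of the form $\pm\sqrt{z}\, x^{1\pm\nu} J_{\pm\nu\pm 1}(x\sqrt{z})$, whose limits at $x = 0$ are then read off from \eqref{Bessel_0}. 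Two of the four brackets vanish (a positive power of $x$ wins), while the other two contribute explicit constants of the form $z^{\pm\nu/2}$ times a ratio of Gamma functions. Forming $\Gamma_1^{\nu,b} f_z / \Gamma_0^{\nu,b} f_z$ and simplifying via $\Gamma(1-\nu) = -\nu\Gamma(-\nu)$ and $\Gamma(1+\nu) = \nu\Gamma(\nu)$ will deliver the first line of \eqref{Weyl_fun}.

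I expect the main obstacle to be the case $\nu = 0$, where the logarithmic term in \eqref{Y0} complicates the extraction of $\Gamma_0^{0,b}$ and $\Gamma_1^{0,b}$ on $f_z$. The cleanest path is to decompose
\begin{equation*}
  x^{1/2} Y_0(x\sqrt{z}) = \tfrac{2}{\pi}\, x^{1/2}\log x + \tfrac{2}{\pi}\bigl(\log(\sqrt{z}/2) + \gamma\bigr) x^{1/2} + r_z(x),
\end{equation*}
and $x^{1/2} J_0(x\sqrt{z}) = x^{1/2} + \tilde r_z(x)$, with $r_z, \tilde r_z \in \widetilde{H}_0^2[0,b]$; then the coefficients of $x^{1/2}$ and $x^{1/2}\log x$ in $f_z$ directly give $\Gamma_0^{0,b} f_z$ and $\Gamma_1^{0,b} f_z$, and the logarithm $\log(\sqrt{z}/2) + \gamma$ survives the division to produce the second line of \eqref{Weyl_fun}. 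The delicate step is verifying that the remainders $r_z, \tilde r_z$ genuinely lie in $\widetilde{H}_0^2[0,b]$ (not merely being pointwise $O(x^{5/2}\log x)$), which needs an $H^2$-norm bound rather than a pointwise estimate.
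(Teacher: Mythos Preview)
Your proposal is correct and, for part~(ii) with $\nu\in(0,1)$, essentially identical to the paper's proof: the same defect vector (up to a harmless normalization) and the same use of the recurrences \eqref{E:reccur+}--\eqref{E:reccur-} together with \eqref{Bessel_0}.

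Two differences are worth noting. For part~(i), the paper does not use the decomposition \eqref{E:1.118} or Lemma~\ref{L:1.222}; it simply integrates by parts to get $[f,g]_0$ and then computes $(\Gamma_1 f,\Gamma_0 g)-(\Gamma_0 f,\Gamma_1 g)$ directly from the definition, checking that the two limits coincide. Your route via basis coefficients is equivalent and has the merit of making surjectivity explicit, which the paper actually omits. For part~(ii) with $\nu=0$, the paper again avoids the basis decomposition you propose: it computes the four brackets $[x^{1/2}J_0,x^{1/2}]_0$, $[x^{1/2}Y_0,x^{1/2}]_0$, $[x^{1/2}J_0,x^{1/2}\log x]_0$, $[x^{1/2}Y_0,x^{1/2}\log x]_0$ directly, using the derivative identities \eqref{Bessel'} (so that $J_0'=-J_1$, $Y_0'=-Y_1$) and the small-argument asymptotics \eqref{Bessel_0}, \eqref{Bessel_Y} of $J_1$ and $Y_1$. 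This completely sidesteps the $\widetilde{H}_0^2$-remainder verification you flagged as the main obstacle: only pointwise limits of explicit Bessel combinations are needed, never an $H^2$-norm bound. Your approach works as well, but the paper's is shorter and bypasses precisely the step you were worried about.
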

\begin{proof}
(i) Let $f,g\in\dom({A^*_{\nu,b}})$. Integrating by parts, we obtain
\begin{eqnarray*}
({A^*_{\nu,b}}f,g)&-&(f,{A^*_{\nu,b}}g)=\lim\limits_{\varepsilon\rightarrow0}\left [
\int\limits_{\varepsilon}^b
\left(-f^{\prime\prime}(x)\overline{g(x)}+\frac{\nu^2-\frac{1}{4}}{x^2}f(x)\right)
\overline{g(x)}dx\right.-\\
&-&\left.\int\limits_{\varepsilon}^b
f(x)\left(\overline{-g^{\prime\prime}(x)}+\frac{\nu^2-\frac{1}{4}}{x^2}\overline{g(x)}\right)dx\right]
=
\lim\limits_{\varepsilon\rightarrow0}\left\{-f(\varepsilon)\overline{g^\prime(\varepsilon)}
+f^\prime(\varepsilon)\overline{g(\varepsilon)}\right\}.
\end{eqnarray*}
On the other hand it is easily seen  that
\[(\Gamma_1^{\nu,b}f,\Gamma_0^{\nu,b}g)-(\Gamma_0^{\nu,b}f,\Gamma_1^{\nu,b}g)=
\]
\[
=\frac{1}{2\nu}\lim\limits_{x\rightarrow
0}\left[\left(\left(\frac{1}{2}+\nu\right)x^{\nu-\frac{1}{2}}f(x)-
x^{\frac{1}{2}+\nu}f^\prime(x)\right)\left(\left(\frac{1}{2}-\nu\right)
x^{-\frac{1}{2}-\nu}\overline{g(x)}-x^{\frac{1}{2}-\nu}\overline{g^\prime(x)}\right)\right.-
\]
\[
\left.-\left(\left(\frac{1}{2}-\nu\right)x^{-\nu-\frac{1}{2}}f(x)-x^{\frac{1}{2}-\nu}f^\prime(x)
\right)\left(\left(\frac{1}{2}+\nu\right)x^{-\frac{1}{2}+\nu}\overline{g(x)}-x^{\frac{1}{2}+\nu}
\overline{g^\prime(x)}\right)\right]
\]
\[
=\frac{1}{2\nu}\lim\limits_{x\rightarrow
0}2\nu(f'(x)\overline{g(x)}-f(x)\overline{g'(x)})=\lim\limits_{x\rightarrow
0}\{-f(x)\overline{g'(x)}+f'(x)\overline{g(x)})\}.
\]
 Comparing this formula with the previous one we obtain the Green's formula
$$({A^*_{\nu,b}}f,g)-(f,{A^*_{\nu,b}}g)=(\Gamma_1^{\nu,b}f,\Gamma_0^{\nu,b}g)
-(\Gamma_0^{\nu,b}f,\Gamma_1^{\nu,b}g).$$

The case $\nu=0$ is considered similarly.

{\rm{(ii.1)}} First we consider the case $\nu\in(0,1)$.

 By the asymptotic relations \eqref{Bessel_0}
$x^{1/2}J_\nu(x\sqrt{z})\in L^2(0,b)$ and $x^{1/2}J_{-\nu}(x\sqrt{z})\in L^2(0,b)$.
Therefore
\begin{equation}\label{E:def_vec}
f_z(x):=x^{\frac{1}{2}}\left(J_{\nu}(x\sqrt{z})-\frac{J_{\nu}(b\sqrt{z})}
{J_{-\nu}(b\sqrt{z})}J_{-\nu}(x\sqrt{z})\right)\in L^2(0,b).
\end{equation}
It is easily seen that $f_z(b)=0$, and hence, $f_z\in\dom({A^*_{\nu,b}})$ and
$({A^*_{\nu,b}}-z)f_z=0$. In other words, deficient space $\mathfrak{N}_z({A_{\nu,b}})$
of the operator ${A_{\nu,b}}$ generated by the vector $f_z$.

Using the asymptotic behavior of the Bessel functions \eqref{Bessel_0} and formula
\eqref{E:reccur+} we obtain
\begin{eqnarray}\label{bessel+}
 \big[x^{1/2}J_{\nu}(x\sqrt{z}), x^{1/2+\nu}\big]_0&=&\lim\limits_{x\rightarrow0}
\left[z^{1/2}x^{1+\nu}J_{\nu+1}(x\sqrt{z})\right]\nonumber\\
&=&\lim\limits_{x\rightarrow0}\left[\frac{z^{1+\nu/2}x^{2(1+\nu)}}{2^{1+\nu}\Gamma(2+\nu)}
(1+O(x^2z))\right]=0,\nonumber\\
\big[x^{1/2}J_{-\nu}(x\sqrt{z}),x^{1/2+\nu}\big]_0
&=&\lim\limits_{x\rightarrow0}\left[-z^{1/2}x^{1+\nu}J_{-\nu-1}(x\sqrt{z})\right]
\\
&=&\lim\limits_{x\rightarrow0}\left[-\frac{z^{-\nu/2}2^{1+\nu}}{\Gamma(-\nu)}(1+O(x^2z))\right]=
-\frac{z^{-\nu/2}2^{1+\nu}}{\Gamma(-\nu)}.\nonumber
\end{eqnarray}
Similarly, using the asymptotic behavior of the Bessel functions \eqref{Bessel_0} and
formula \eqref{E:reccur-} we obtain
\begin{eqnarray}\label{bessel-}
 \big[x^{1/2}J_{\nu}(x\sqrt{z}), x^{1/2-\nu}\big]_0&=&-\lim\limits_{x\rightarrow0}
 \left[z^{1/2}x^{1-\nu}J_{\nu-1}(x\sqrt{z})\right]\nonumber\\
&=&-\lim\limits_{x\rightarrow0}\left[\frac{z^{\nu/2}}{2^{\nu-1}\Gamma(\nu)}(1+O(x^2z))\right]=
-\frac{z^{\nu/2}}{2^{\nu-1}\Gamma(\nu)},\nonumber\\
\big[x^{1/2}J_{-\nu}(x\sqrt{z}),
x^{1/2-\nu}\big]_0&=&-\lim\limits_{x\rightarrow0}\left[z^{1/2}x^{1-\nu}J_{-(\nu-1)}(x\sqrt{z})\right]
\\
&=&-\lim\limits_{x\rightarrow0}\left[\frac{z^{1-\nu/2}2^{\nu-1}x^{2(1-\nu)}}{\Gamma(2-\nu)}(1+O(x^2z))\right]=0.\nonumber
\end{eqnarray}
From the formulas \eqref{triple}, \eqref{E:def_vec} and \eqref{bessel+}, \eqref{bessel-}
we arrive at the relation
\begin{equation}\label{bound}
\Gamma_0^{\nu,b}f_z=\frac{2^{1+\nu}}{\Gamma(-\nu)}\cdot
\frac{J_{\nu}(b\sqrt{z})}{J_{-\nu}(b\sqrt{z})}\cdot z^{-\frac{\nu}{2}}; \qquad
\Gamma_1^{\nu,b}f_z=\frac{1}{\nu2^{\nu}{\Gamma(\nu)}}\cdot
 z^{\frac{\nu}{2}}.
\end{equation}
Hence, relation \eqref{bound} and Definition \ref{D:4.0} yield the fist part of formula
\eqref{Weyl_fun}.

(ii.2) The case $\nu=0$.

 By the asymptotic relations \eqref{Bessel_0}
and \eqref{Bessel_Y} $x^{1/2}J_0(x\sqrt{z})\in L^2(0,b)$ and $x^{1/2}Y_{0}(x\sqrt{z})\in
L^2(0,b)$. Therefore
\begin{equation}\label{E:def_vec_0}
f_z(x):=x^{\frac{1}{2}}\left(J_{0}(x\sqrt{z})-\frac{J_{0}(b\sqrt{z})}
{Y_{0}(b\sqrt{z})}Y_{0}(x\sqrt{z})\right)\in L^2(0,b).
\end{equation}
It is easily seen that $f_z(b)=0$, and, hence, $f_z\in\dom(A^*_{0,b})$ and
$(A^*_{0,b}-z)f_z=0$. In other words, the deficiency space $\mathfrak{N}_z(A_{0,b})$ of
the operator $A_{0,b}$ generated by the vector $f_z$.

Using the asymptotic behavior of the Bessel functions \eqref{Bessel_0} and formula
\eqref{E:reccur+} we obtain
\begin{eqnarray}\label{bessel+_0}
 \big[x^{1/2}J_{0}(x\sqrt{z}), x^{1/2}\big]_0&=&\lim\limits_{x\rightarrow0}
\left[xz^{1/2}J_{1}(x\sqrt{z})\right]\nonumber\\
&=&\lim\limits_{x\rightarrow0}\left[\frac{x^2z}{2}
(1+O(x^2z))\right]=0,\nonumber\\
\big[x^{1/2}Y_{0}(x\sqrt{z}),x^{1/2}\big]_0 &=&\lim\limits_{x\rightarrow0}
\left[xz^{1/2}Y_{1}(x\sqrt{z})\right]\nonumber\\
&=&\lim\limits_{x\rightarrow0}\left[-x\sqrt{z}\cdot\frac{2}{\pi\cdot x\sqrt{z}}(1+O(x^2z))\right]=
-\frac{2}{\pi}.
\end{eqnarray}
Similarly, using the asymptotic behavior of the Bessel functions \eqref{Bessel_0} and
\eqref{Bessel_Y} and formula \eqref{Bessel'} we obtain
\begin{eqnarray}\label{bessel-0}
 \big[x^{1/2}J_{0}(x\sqrt{z}), x^{1/2}\log(x)\big]_0&=&\lim\limits_{x\rightarrow0}
 \left[J_0(x\sqrt{z})+x\log(x)\cdot\sqrt{z}J_1(x\sqrt{z})\right]\nonumber\\
&=&\lim\limits_{x\rightarrow0}\left[\left(1+\frac{x^2\log(x)}{2}z\right)(1+O(x^2z))\right]=
1,\nonumber\\
\big[x^{1/2}Y_{0}(x\sqrt{z}), x^{1/2}\log(x)\big]_0&=&\lim\limits_{x\rightarrow0}
 \left[Y_0(x\sqrt{z})+x\log(x)\cdot\sqrt{z}Y_1(x\sqrt{z})\right]\nonumber\\
&=&\lim\limits_{x\rightarrow0}\left[\frac{2}{\pi}\left[\log\left(\frac{x\sqrt{z}}{2}\right)+\gamma\right]
-\frac{2}{\pi}\log(x)(1+O(x^2z))\right]\nonumber\\
&=&\frac{2}{\pi}\left[\log\left(\frac{\sqrt{z}}{2}\right)+\gamma\right].
\end{eqnarray}
From formulas \eqref{triple}, \eqref{E:def_vec_0},\eqref{bessel+_0} and \eqref{bessel-0}
we arrive at the relation
\begin{equation}\label{bound_0}
\Gamma_0^{0,b}f_z=\frac{2}{\pi}\cdot \frac{J_{0}(b\sqrt{z})}{Y_{0}(b\sqrt{z})}, \qquad
\Gamma_1^{0,b}f_z=1-\frac{2}{\pi}\cdot
\frac{J_{0}(b\sqrt{z})}{Y_{0}(b\sqrt{z})}\left[\log\left(\frac{\sqrt{z}}{2}\right)+\gamma\right].
\end{equation}
Hence, by \eqref{bound_0} and Definition \ref{D:4.0} follows the second part of the
formula \eqref{Weyl_fun}.
\end{proof}
\begin{corollary}
The Weyl function $M_{\nu,b}(z)$ is meromorphic, hence the spectrum of the operator
${A_{\nu,b}}$ is discrete.
\end{corollary}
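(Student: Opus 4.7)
The plan is to exhibit $M_{\nu;b}(z)$ as a ratio of entire functions (up to an additive constant), by first stripping the fractional powers and logarithms of $\sqrt z$ using the Taylor expansions \eqref{Jnu}--\eqref{Y0}, and then to translate meromorphy into discreteness of $\sigma(A_0)$, where $A_0 := A^{*}_{\nu,b}\upharpoonright \ker\Gamma_0^{\nu,b}$ is one of the canonical self-adjoint extensions from Proposition~\ref{propo}.

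For $\nu\in(0,1)$, I would use \eqref{Jnu} in the form $J_{\pm\nu}(b\sqrt z)=(b/2)^{\pm\nu}\,z^{\pm\nu/2}\,\phi_{\pm\nu}(z)$ with
\begin{equation*}
\phi_{\pm\nu}(z)=\sum_{k=0}^{\infty}\frac{(-b^{2}z/4)^{k}}{k!\,\Gamma(\pm\nu+k+1)},
\end{equation*}
each of which is entire in $z$ (only integer powers of $z$ appear). Then
\begin{equation*}
z^{\nu}\cdot\frac{J_{-\nu}(b\sqrt z)}{J_{\nu}(b\sqrt z)}=(b/2)^{-2\nu}\frac{\phi_{-\nu}(z)}{\phi_{\nu}(z)},
\end{equation*}
so the first line of \eqref{Weyl_fun} is (up to a constant factor) a ratio of two entire functions, hence meromorphic on $\mathbb{C}$. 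For $\nu=0$ I would invoke \eqref{Y0} to split $Y_0(b\sqrt z)=\tfrac{2}{\pi}\bigl[\log(b\sqrt z/2)+\gamma\bigr]J_0(b\sqrt z)+\tfrac{2}{\pi}H(z)$, where the bracketed series in \eqref{Y0} contributes only even powers of $b\sqrt z$, so $H(z)$ is entire in $z$; $J_0(b\sqrt z)$ itself is entire in $z$ by \eqref{J0}. Dividing and substituting into the second line of \eqref{Weyl_fun}, the logarithmic and $\gamma$ terms cancel, leaving
\begin{equation*}
M_{0,b}(z)=\log b+\frac{H(z)}{J_0(b\sqrt z)},
\end{equation*}
which is meromorphic on $\mathbb{C}$.

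For the spectral conclusion, recall from Subsection 2.2 that $M_{\nu;b}(\cdot)$ is holomorphic precisely on $\rho(A_0)$, and that $A_0$ is a self-adjoint extension of $A_{\nu,b}$ with simple spectrum because $n_{\pm}(A_{\nu,b})=1$ by Theorem~\ref{P:10}. Meromorphy on $\mathbb{C}$ means the singular set is a discrete subset of $\mathbb{C}$ with no finite accumulation points; since $\sigma(A_0)\subset\mathbb{R}$ equals the set of (real) poles of $M_{\nu;b}$, it is discrete, consisting of isolated eigenvalues of finite multiplicity accumulating only at $\infty$. The same conclusion transfers to every other self-adjoint extension by the Krein-type resolvent formula attached to $\Pi=\{\mathbb{C},\Gamma_0^{\nu,b},\Gamma_1^{\nu,b}\}$.

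The only delicate step is the $\nu=0$ case: one has to verify cleanly that the $\log(\sqrt z)$ in $-\log(\sqrt z/2)$ exactly cancels the $\log(\sqrt z)$ produced by $\tfrac{\pi}{2}Y_0(b\sqrt z)/J_0(b\sqrt z)$, so that no branch cut survives. Once this cancellation is made explicit, the rest of the argument is routine.
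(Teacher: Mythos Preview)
Your proposal is correct and follows essentially the same route as the paper: both arguments strip the fractional powers (resp.\ the logarithm) of $\sqrt z$ via the Taylor expansions \eqref{Jnu}--\eqref{Y0}, arriving at a ratio of entire functions in $z$ (your $\phi_{\pm\nu}$ are precisely the series the paper displays, and your $\log b + H(z)/J_0(b\sqrt z)$ is exactly what the paper computes for $\nu=0$). Your write-up is in fact more complete than the paper's, which stops after exhibiting the series and leaves both the meromorphy conclusion and the passage from meromorphy of $M_{\nu,b}$ to discreteness of $\sigma(A_0)$ implicit.
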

\begin{proof}
$(i)$ First we consider the case $\nu\in(0,1)$. It follows from the form of Weyl
function~\eqref{Weyl_fun} and the form of the Bessel functions $J_\nu$,
$J_{-\nu}$~\eqref{Jnu} that
\begin{eqnarray*}
M_{\nu,b}(z)&=&-\frac{\Gamma(1-\nu)}{2\nu4^{\nu}\Gamma(1+\nu)}\cdot\frac{J_{-\nu}(b\sqrt{z})}{J_{\nu}(b\sqrt{z})}\cdot
z^{\nu}\\&=&-\frac{\Gamma(1-\nu)}{2\nu
b^{2\nu}\Gamma(1+\nu)}\sum\limits_{k=0}^\infty\frac{\left(-\frac{1}{4}b^2z\right)^k}
{k!\Gamma(-\nu+k+1)}\left\{\sum\limits_{k=0}^\infty\frac{\left(-\frac{1}{4}b^2z\right)^k}
{k!\Gamma(\nu+k+1)}\right\}^{-1}.
\end{eqnarray*}

$(ii)$ The case $\nu=0$. It follows from the form of Weyl function~\eqref{Weyl_fun} and
the form of the Bessel functions $J_0$, $Y_0$~\eqref{J0},~\eqref{Y0} that
$$
M_{0,b}(z)=-\log\left(\frac{\sqrt{z}}{2}\right)+\frac{\pi}{2}\frac{Y_0(b\sqrt{z})}{J_0(b\sqrt{z})}-\gamma=
\log (b)+\left\{
\frac{\frac{1}{4}b^2z}{(1!)^2}-\left(1+\frac{1}{2}\right)\frac{\left(\frac{1}{4}b^2z\right)^2}{(2!)^2}+
\left(1+\frac{1}{2}+\frac{1}{3}\right)\frac{\left(\frac{1}{4}b^2z\right)^3}{(3!)^2}-\ldots\right\}$$
$$
\times\left\{1-\frac{\frac{1}{4}b^2z}{(1!)^2}+\frac{\left(\frac{1}{4}b^2z\right)^2}{(2!)^2}-
\frac{\left(\frac{1}{4}b^2z\right)^3}{(3!)^2}+\ldots\right\}^{-1}.
$$
\end{proof}
\begin{proposition}\label{Friendrichs.}
Let $\nu\in[0,1)$ and $\Pi_{\nu,b}=\{\mathcal{H}, \Gamma_0^{\nu,b}, \Gamma_1^{\nu,b}\}$
be the boundary triplet of the form~\eqref{triple} for the operator ${A^*_{\nu,b}}$.
Then:
\begin{itemize}
\item[\textit{(i)}] The domain of the Friedrichs extension ${A_{\nu,b}}_{F}$
of the operator ${A_{\nu,b}}$ has the form
\begin{eqnarray}\label{Fri_f}
\dom({A_{\nu,b}}_{F})=\ker(\Gamma_0^{\nu,b})=\left\{f\in \dom({A^*_{\nu,b}}):[f,
x^{\frac{1}{2}+\nu}]_0=0\right\}.
\end{eqnarray}

\item[\textit{(ii)}] The domain of the Krein
extension ${A_{\nu,b}}_{K}$ of the operator ${A_{\nu,b}}$ has the form
\begin{equation}\label{Kre_f}
\dom({A_{\nu,b}}_{K})=\left\{
\begin{array}{ll}
\left\{f\in\dom({A^*_{\nu,b}}):(2\nu)^{-1}[f,b^{-2\nu}x^{1/2+\nu}-x^{1/2-\nu}]_0=0\right\},
&
\nu\in(0,1),\\
\left\{f\in \dom(A^*_{0,b}):[f,x^\frac{1}{2}\log\left(\frac{x}{b}\right)]_0=0\right\},&
\nu=0.
\end{array}
\right.
\end{equation}
\end{itemize}
\end{proposition}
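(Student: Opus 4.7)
The strategy is to apply Proposition \ref{prkf} to the boundary triplet $\Pi_{\nu,b}$ and Weyl function $M_{\nu,b}$ computed in Proposition \ref{P:11}. Since $\mathcal{H}=\mathbb{C}$, the Friedrichs and Krein extensions will be recovered from the scalar limits of $M_{\nu,b}(x)$ as $x\downarrow-\infty$ and $x\uparrow 0$, respectively.

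For part $(i)$ I plan to prove $\lim_{x\downarrow-\infty}M_{\nu,b}(x)=-\infty$; Proposition \ref{prkf}$(ii)$ will then force ${A_{\nu,b}}_F=A_0=A^*_{\nu,b}\!\upharpoonright\!\ker\Gamma_0^{\nu,b}$, which is exactly \eqref{Fri_f}. Substituting $z=-t$ with $t\to+\infty$ and $\sqrt z=i\sqrt t$ into \eqref{Weyl_fun}, I will invoke the identities $J_{\pm\nu}(is)=e^{\pm i\nu\pi/2}I_{\pm\nu}(s)$ together with the modified-Bessel asymptotic $I_{\pm\nu}(s)\sim e^s/\sqrt{2\pi s}$ at infinity. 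For $\nu\in(0,1)$ the ratio $J_{-\nu}/J_{\nu}$ then contributes a phase $e^{-i\nu\pi}$ that cancels against $(-t)^\nu=t^\nu e^{i\nu\pi}$ (with the branch chosen for $z^\nu$), leaving a real, negative leading term of order $t^\nu$ that diverges to $-\infty$. For $\nu=0$ the identities $J_0(ix)=I_0(x)$ and $Y_0(ix)=iI_0(x)-\tfrac2\pi K_0(x)$ give $(\pi/2)Y_0(ib\sqrt t)/J_0(ib\sqrt t)\to i\pi/2$, which cancels the $i\pi/2$ hidden in $-\log(\sqrt{-t}/2)$, so that the surviving real part $-\log(\sqrt t/2)-\gamma$ tends to $-\infty$.

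For part $(ii)$ I would compute $M_{\nu,b}(0)$ as a finite scalar from the Taylor expansions \eqref{Bessel_0}, \eqref{Bessel_Y}. For $\nu\in(0,1)$ the leading terms of $J_{\pm\nu}$ imply
\[
\frac{J_{-\nu}(b\sqrt z)}{J_\nu(b\sqrt z)}\sim \frac{4^\nu\,\Gamma(1+\nu)}{\Gamma(1-\nu)}\,b^{-2\nu}z^{-\nu}\qquad\text{as }z\to 0,
\]
and after cancellation against the factor $z^\nu$ and the constants in \eqref{Weyl_fun} this yields $M_{\nu,b}(0)=-b^{-2\nu}/(2\nu)$. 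For $\nu=0$ the logarithmic singularity in $Y_0$ exactly matches the explicit $-\log(\sqrt z/2)$ appearing in $M_{0,b}$, and everything collapses to $M_{0,b}(0)=\log b$. Proposition \ref{prkf}$(i)$ then produces $\dom({A_{\nu,b}}_K)=\ker(\Gamma_1^{\nu,b}-M_{\nu,b}(0)\Gamma_0^{\nu,b})$. Substituting the values of $M_{\nu,b}(0)$ and the explicit forms of $\Gamma_j^{\nu,b}$ from \eqref{triple}, and using linearity of the bracket in its second argument (with the real scalars $b^{-2\nu}$ and $\log b$), reproduces precisely the two conditions in \eqref{Kre_f}.

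The technical core is the $x\to-\infty$ asymptotic in part $(i)$: two contributions to $M_{\nu,b}(x)$ carry complex phases imposed by the branch cut for $z^\nu$ and by $\sqrt{-t}=i\sqrt t$, and these phases must cancel exactly so that the surviving leading behaviour is real and diverges to $-\infty$. Once that cancellation is secured, the remainder of the argument is routine bookkeeping with the Taylor and modified-Bessel expansions collected in Section~2.3.
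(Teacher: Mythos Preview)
Your proposal is correct and follows essentially the same route as the paper: compute the scalar limits $M_{\nu,b}(-\infty)$ and $M_{\nu,b}(0)$ from the explicit Weyl function \eqref{Weyl_fun} and then invoke Proposition~\ref{prkf}. The only technical difference is in the $x\downarrow-\infty$ computation for part~$(i)$: the paper applies the large-argument cosine/sine asymptotics \eqref{asym_infty} directly to $J_{\pm\nu}(b\sqrt z)$ and $Y_0(b\sqrt z)$ with complex argument and tracks the phases through the exponential decomposition of $\cos$, whereas you first pass to the modified Bessel functions via $J_{\pm\nu}(is)=e^{\pm i\nu\pi/2}I_{\pm\nu}(s)$ and $Y_0(is)=iI_0(s)-\tfrac{2}{\pi}K_0(s)$ and then use the real asymptotics $I_{\pm\nu}(s)\sim e^s/\sqrt{2\pi s}$, $K_0(s)\to 0$. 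Your route makes the phase cancellation slightly more transparent (the ratio $I_{-\nu}/I_\nu\to 1$ is manifestly real), while the paper's route stays within the formulas already listed in Section~2.3; in either case the conclusion and the remaining bookkeeping for part~$(ii)$ are identical.
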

\begin{proof}
 (i) First we consider the case $\nu\in(0,1)$.

 Applying the asymptotic behavior of the Bessel functions \eqref{asym_infty} to the
 Weyl function~\eqref{Weyl_fun}, we obtain
\begin{eqnarray*}
 M_{\nu,b}(-\infty)&\cdot&\frac{2\nu4^{\nu}\Gamma(1+\nu)}{\Gamma(1-\nu)}=\frac{2\nu4^{\nu}\Gamma(1+\nu)}{\Gamma(1-\nu)}
 \lim\limits_{z\rightarrow-\infty}M_{\nu,b}(z)=
 -\lim\limits_{z\rightarrow-\infty}
 \cdot\frac{J_{-\nu}(b\sqrt{z})}{J_{\nu}(b\sqrt{z})}\cdot
z^{\nu}
\\
&=&-\lim\limits_{x\rightarrow+\infty}
 \frac{J_{-\nu}(ib\sqrt{-x})}{J_{\nu}(ib\sqrt{-x})}\cdot
(-x)^{\nu} =-\lim\limits_{x\rightarrow+\infty}
\left[\frac{\cos\left(ib\sqrt{-x}+\frac{\nu\pi}{2}-\frac{\pi}{4}\right)}
{\cos\left(ib\sqrt{-x}-\frac{\nu\pi}{2}-\frac{\pi}{4}\right)}\cdot{(-x)^{\nu}}\right]
\\
&=& -\lim\limits_{x\rightarrow+\infty}
\left[(-x)^{\nu}\cdot\frac{e^{-i(ib\sqrt{-x}+\frac{\nu\pi}{2}-\frac{\pi}{4})}+o(1)}
{e^{-i(ib\sqrt{-x}-\frac{\nu\pi}{2}-\frac{\pi}{4})}+o(1)}\right]
=-e^{-i\nu\pi}\lim\limits_{x\rightarrow+\infty}(-x)^{\nu} \\
&=& -\frac {e^{i\nu\pi}}{e^{i\nu\pi}}\lim\limits_{x\rightarrow+\infty} x^{\nu}
=-\lim\limits_{x\rightarrow+\infty} x^{\nu}=-\infty.
\end{eqnarray*}

 The case $\nu=0$.

Applying the asymptotic behavior of the Bessel functions \eqref{asym_infty} to the
 Weyl function~\eqref{Weyl_fun}, we obtain
\begin{eqnarray*}
M_{0,b}(-\infty)&=&\lim\limits_{z\rightarrow-\infty}M_{0,b}(z)=\lim\limits_{z\rightarrow-\infty}\left[
-\log\left(\frac{\sqrt{z}}{2}\right)+\frac{\pi}{2}\frac{Y_0(b\sqrt{z})}{J_0(b\sqrt{z})}-\gamma\right]\\
&=&
\lim\limits_{x\rightarrow+\infty}\left[-\log\left(\frac{i\sqrt{x}}{2}\right)+\frac{\pi}{2}\frac{Y_0(ib\sqrt{x})}{J_0(ib\sqrt{x})}-\gamma\right]\\
&=&\lim\limits_{x\rightarrow+\infty}\left[-\frac{\pi}{2}i-\log(\sqrt{x})+\frac{\pi}{2}\cdot
\frac{\sin(bi\sqrt{x}-\frac{\pi}{4})}{\cos(bi\sqrt{x}-\frac{\pi}{4})}-\gamma\right]\\
&=&\lim\limits_{x\rightarrow+\infty}
\left[-\frac{\pi}{2}i-\log(\sqrt{x})+\frac{\pi}{2}\cdot i-\gamma\right]=-\infty
\end{eqnarray*}
So, by Proposition~\ref{prkf}, relation \eqref{Fri_f} is valid.

(ii.1) First we consider the case $\nu\in(0,1)$.

 Applying the asymptotic behavior of the Bessel functions~\eqref{Bessel_0} to the
 Weyl function~\eqref{Weyl_fun}, we obtain
\begin{eqnarray}\label{M0_ab_nu}
M_{\nu,b}(0)&=&\lim\limits_{z\rightarrow-0}M_{\nu,b}(z)=\lim\limits_{z\rightarrow-0}\left[
-\frac{\Gamma(1-\nu)}{2\nu4^{\nu}\Gamma(1+\nu)}\cdot\frac{J_{-\nu}(b\sqrt{z})}{J_{\nu}(b\sqrt{z})}\cdot
z^{\nu} \right]\nonumber\\
&=&-\lim\limits_{z\rightarrow-0}
\left[\frac{\Gamma(1-\nu)}{2\nu\Gamma(1+\nu)4^{\nu}}\cdot\frac{\Gamma(1+\nu)4^{\nu}}{\Gamma(1-\nu)}
 \cdot b^{-2\nu}z^{-\nu}z^{\nu}\right]=-\frac{b^{-2\nu}}{2\nu}.
 \end{eqnarray}
 The first part of relation \eqref{Kre_f} follows from Proposition~\ref{prkf}.

(ii.2) The case $\nu=0$.
\begin{eqnarray}\label{M0_ab_0}
M_{0,b}(0)&=&\lim\limits_{z\rightarrow-0}M_{0,b}(z)=\lim\limits_{z\rightarrow-0}\left[
-\log\left(\frac{\sqrt{z}}{2}\right)+\frac{\pi}{2}\frac{Y_0(b\sqrt{z})}{J_0(b\sqrt{z})}-\gamma\right]\nonumber\\
&=&\lim\limits_{z\rightarrow-0}
\left[-\log\left(\frac{\sqrt{z}}{2}\right)+\frac{\pi}{2}\cdot\frac{2}{\pi}
\left(\log\left(\frac{b\sqrt{z}}{2}\right)+\gamma\right)-\gamma\right]= \log(b).
 \end{eqnarray}
The second part of relation~\eqref{Kre_f} follows from Proposition~\ref{prkf}.
\end{proof}

\begin{remark}
We note that by virtue of the formulas~\eqref{Fri_f} and~\eqref{Kre_f} domain of the
Friedrichs extension does not depend on $b$, and the Krein extension depends.
\end{remark}
\begin{corollary}\label{C:Ext}

{\it{(i)}} 
%
For $\nu\in(0,1)$ extension  ${ A}_{{\nu,b}_h}$ is  non-negative, ${{
A}_{{\nu,b}_h}}\geqslant 0$ if and only if
\[h\geqslant -\frac{b^{-2\nu}}{2\nu}.
\]

{\it{(ii)}} For $\nu=0$ extension ${{ A}_{{0,b}_h}}$ is  non-negative, ${{
A}_{{0,b}_h}}\geqslant 0$ if and only if
\[h\geqslant \log(b).
\]
\end{corollary}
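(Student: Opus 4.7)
The plan is to read the corollary as a direct application of Proposition~\ref{prkf}(iii) to the boundary triplet $\Pi_{\nu,b}=\{\mathbb{C},\Gamma_0^{\nu,b},\Gamma_1^{\nu,b}\}$ constructed in Proposition~\ref{P:11}, using the values of the Weyl function at $0$ already extracted in the proof of Proposition~\ref{Friendrichs.}.

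First I would set up the parametrization. Since $n_\pm(A_{\nu,b})=1$ and the auxiliary space is $\mathcal{H}=\mathbb{C}$, every closed linear relation in $\mathcal{H}$ is either the graph of multiplication by a real number $h\in\mathbb{R}$ or the purely multivalued relation $\{0\}\times\mathbb{C}$. By Proposition~\ref{propo}(ii) the self-adjoint extensions of $A_{\nu,b}$ are thus exhausted by ${A_{\nu,b}}_F = {A_{\nu,b}}_0 = A^*_{\nu,b}\!\upharpoonright\ker\Gamma_0^{\nu,b}$ (corresponding to $\Theta=\{0\}\times\mathbb{C}$, i.e.\ ``$h=\infty$'') and the one-parameter family
\[
{A_{\nu,b}}_h \;=\; A^*_{\nu,b}\!\upharpoonright\ker\bigl(\Gamma_1^{\nu,b}-h\,\Gamma_0^{\nu,b}\bigr),\qquad h\in\mathbb{R},
\]
as in \eqref{extensB}. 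This is the precise meaning we attach to the symbol ${A_{\nu,b}}_h$ in the statement.

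Next I would verify the disjointness hypothesis needed for Proposition~\ref{prkf}(iii). From the proof of Proposition~\ref{Friendrichs.} we already computed
\[
M_{\nu,b}(0)=-\frac{b^{-2\nu}}{2\nu}\ \ (\nu\in(0,1)),\qquad M_{0,b}(0)=\log(b),
\]
and in both cases $M_{\nu,b}(0)\in\mathbb{R}=\mathcal{C}(\mathbb{C})$, so the Krein extension ${A_{\nu,b}}_K$ is disjoint from ${A_{\nu,b}}_0={A_{\nu,b}}_F$. Hence Proposition~\ref{prkf}(iii) applies and tells us that ${A_{\nu,b}}_h$ is non-negative if and only if the scalar relation $\Theta=h$ satisfies $h-M_{\nu,b}(0)\geq 0$, i.e.\ $h\geq M_{\nu,b}(0)$.

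Substituting the two values of $M_{\nu,b}(0)$ recorded in \eqref{M0_ab_nu} and \eqref{M0_ab_0} immediately yields the thresholds $h\geq -b^{-2\nu}/(2\nu)$ for $\nu\in(0,1)$ and $h\geq\log(b)$ for $\nu=0$, which is the content of (i) and (ii). There is essentially no obstacle: the corollary is a bookkeeping consequence of Proposition~\ref{prkf}(iii) once the boundary triplet of Proposition~\ref{P:11} and the limiting values $M_{\nu,b}(0)$ from Proposition~\ref{Friendrichs.} are in hand. The only minor point to be careful about is that the ``missing'' extension $h=\infty$, namely the Friedrichs extension itself, is automatically non-negative and is not excluded by the above parametrization—it simply corresponds to letting $h\to+\infty$, which trivially satisfies either threshold.
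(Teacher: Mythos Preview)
Your proposal is correct and follows essentially the same route as the paper: both invoke Proposition~\ref{prkf}(iii) for the boundary triplet of Proposition~\ref{P:11}, after noting that $A_0$ is the Friedrichs extension and plugging in the values $M_{\nu,b}(0)=-b^{-2\nu}/(2\nu)$ and $M_{0,b}(0)=\log(b)$ already computed in \eqref{M0_ab_nu}--\eqref{M0_ab_0}. Your write-up is slightly more explicit about the parametrization and the disjointness check, but the argument is the same.
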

\begin{proof}
(i)  By virtue of the Proposition~\ref{prkf}~(ii), ${{ A}_{{\nu,b}_0}}$ is the
Friedrichs extension. From~\eqref{M0_ab_nu} it  follows that
$M_{\nu,b}(0)=-\frac{b^{-2\nu}}{2\nu}$ and
   then, by virtue of the Proposition~\ref{prkf}~(iii), the
   extension
   ${{ A}_{{\nu,b}_h}}$ is a non-negative,
   ${{ A}_{{\nu,b}_h}}\geqslant 0$
if and only if $h\geqslant M_{\nu,b}(0)=-\frac{b^{-2\nu}}{2\nu}.$

(ii) By virtue of the Proposition~\ref{prkf}~(ii), ${{ A}_{{0,b}_0}}$ is the Friedrichs
extension. From~\eqref{M0_ab_0} it  follows that $M_{0,b}(0)=\log(b)$ and
    then, by virtue of the Proposition~\ref{prkf}~(iii), the
   extension
   ${{ A}_{{0,b}_h}}$ is a non-negative,
   ${{ A}_{{0,b}_h}}\geqslant 0$
if and only if $h\geqslant M_{0,b}(0)=\log(b).$
\end{proof}

\begin{remark}\label{prin_s}
Note that, for $\nu\in(0,1)$, the solution $x^{1/2+\nu}\in\dom({A_{\nu,b}}_{F})$, while
the solution $x^{1/2-\nu}\not\in\dom({A_{\nu,b}}_{F})$. So $x^{1/2+\nu}$ is the
principal solution at $0$ (see \cite[Def. 11.5]{EGNT}). Similarly, for $\nu=0$, the
solution $x^{1/2}$ is the principal solution at $0$, while $x^{1/2}\log (x)$ is not.
\end{remark}

Indeed,
$$
[x^{1/2+\nu},x^{1/2-\nu}]_0=\lim\limits_{x\to0}\left\{\left(\frac{1}{2}-\nu\right)
x^{1/2+\nu}x^{-1/2-\nu}-\left(\frac{1}{2}+\nu\right)x^{1/2-\nu}x^{-1/2+\nu}
\right\}=-2\nu\neq0.
$$
Therefore, by Proposition \ref{Friendrichs.}, $x^{1/2-\nu}\not\in\dom({A_{\nu,b}}_{F})$.

The case $\nu=0$ is considered similarly.

\section{The Bessel  operator ${A_{\nu,\infty}}$ on half--line}

Here, we consider the minimal Bessel operator ${A_{\nu,\infty}}$ generated by the
expression~\eqref{E:1.1} in $L^2(\mathbb R_+)$ for $\nu\in[0,1)$.

Let $D_{\min}^2$ be a minimal differential operator of the 2nd order, generated in
$L^2(\mathbb R_+)$ by differential expression $-d^2/dx^2,$
     \begin{equation}\label{E:1.127}
\dom(D_{\min}^2)=H^2_0(\mathbb R_+)=\{f \in H^2(\mathbb R_+): f(0)=f'(0)=0\}.
\end{equation}

\begin{theorem}\label{P:10}
Let $\nu\in[0,1)$.  Then the following assertions hold:

$(i)$ The operator ${A_{\nu,\infty}}$ has equal deficiency indices
$n_{\pm}({A_{\nu,\infty}})=1.$

$(ii)$ The domain of the operator ${A_{\nu,\infty}}$ is given by
\begin{equation}\label{Dom}
\dom({A_{\nu,\infty}})=H_0^2(\mathbb R_+).
\end{equation}

$(iii)$ ${A_{\nu,\infty}}_{\max}={A^*_{\nu,\infty}}$ and
\begin{equation}\label{dom*}
\dom({A^*_{\nu,\infty}})= \left\{
\begin{array}{ll}
{H}_0^2(\mathbb R_+)\dot{+}\spann\{x^{1/2+\nu}\xi(x),
x^{1/2-\nu}\xi(x)\},& \nu\in(0,1),\\
{H}_0^2(\mathbb R_+)\dotplus \spann \{x^{1/2}\xi(x), x^{1/2}\log(x)\xi(x)\},& \nu=0,\\
\end{array}
\right.
\end{equation}
where $\xi\in C_0^1(\mathbb R_+)$ is a some function such that $\xi(x)=1$
 for $x\in[0,1]$.
\end{theorem}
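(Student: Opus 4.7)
The plan is to mirror the proof of the finite-interval Theorem~\ref{P:1.3} almost verbatim, exploiting the fact that Lemma~\ref{L:1.2} is stated uniformly for $b \in (0,\infty]$ and so yields $\|Q\mathcal{I}^2\|_2 = 4/3$ as an operator on $L^2(\mathbb{R}_+)$. For parts (i) and (ii), I would take $u \in H_0^2(\mathbb{R}_+)$, use the still-valid representation $u(x) = \int_0^x(x-t)u''(t)\,dt$, and conclude $\|Qu\|_2 \le \tfrac{4}{3}\|D_{\min}^2 u\|_2$. Writing $\nu^2 - 1/4 = \tfrac{3}{4}(1-\varepsilon)$ with $\varepsilon > 0$ upgrades this to the Kato--Rellich bound $\|(\nu^2 - 1/4)Qu\|_2 \le (1-\varepsilon)\|D_{\min}^2 u\|_2$. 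The Kato--Rellich theorem for closed symmetric operators then identifies $A_{\nu,\infty}$ with $D_{\min}^2 + (\nu^2 - 1/4)Q$ on $H_0^2(\mathbb{R}_+)$ and gives $n_\pm(A_{\nu,\infty}) = n_\pm(D_{\min}^2) = 1$, the last equality being the classical observation that $-u'' = \pm i u$ has a one-dimensional $L^2(\mathbb{R}_+)$-solution space, spanned by $e^{-\sqrt{\mp i}\,x}$.

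For (iii), I would invoke the first von Neumann formula, which gives $\dim(\dom(A^*_{\nu,\infty})/\dom(A_{\nu,\infty})) = 2n_\pm = 2$, so it suffices to exhibit two elements of $\dom(A^*_{\nu,\infty})$ that are linearly independent modulo $H_0^2(\mathbb{R}_+)$. For $\nu \in (0,1)$, I take $u_\pm(x) := x^{1/2 \pm \nu}\xi(x)$. Each lies in $L^2(\mathbb{R}_+)$ since $x^{1/2 \pm \nu}$ is square-integrable at $0$ (because $\nu < 1$) and $\xi$ has compact support. Near $0$, where $\xi \equiv 1$, the identity $\tau_\nu\, x^{1/2 \pm \nu} = 0$ holds; on $\supp(\xi')$ the function $u_\pm$ is smooth and compactly supported away from the origin, so $\tau_\nu u_\pm \in L^2(\mathbb{R}_+)$ and $u_\pm \in \dom(A^*_{\nu,\infty})$. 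The $\nu = 0$ case is identical after one records by a direct computation that $\tau_0 x^{1/2} = \tau_0(x^{1/2}\log x) = 0$, so that $x^{1/2}\xi$ and $x^{1/2}\log(x)\,\xi$ serve the same role.

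Linear independence modulo $H_0^2(\mathbb{R}_+)$ will then follow from Lemma~\ref{L:1.222}: every $f \in H_0^2(\mathbb{R}_+)$ satisfies $f(x) = o(x^{3/2})$ as $x \to 0$, whereas no nonzero linear combination of $x^{1/2 \pm \nu}$ (or of $x^{1/2}$ and $x^{1/2}\log x$) decays that fast, the exponents $1/2 \pm \nu$ lying in $(-1/2,\, 3/2)$ for $\nu \in [0,1)$. Combined with the dimension count this yields \eqref{dom*}, and $A_{\nu,\infty,\max} = A^*_{\nu,\infty}$ is then the standard duality between the minimal and maximal operators of a Sturm--Liouville-type expression.

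The main obstacle relative to the interval case is concentrated in (iii): the singular solutions $x^{1/2 \pm \nu}$ themselves are not in $L^2(\mathbb{R}_+)$ because they grow at $+\infty$, so the formula \eqref{E:1.118} cannot be imported verbatim. The cutoff $\xi$ is the device that reconciles the two competing requirements for membership in $\dom(A^*_{\nu,\infty})$: it equals $1$ near $0$, ensuring that the singular behaviour at the origin is annihilated by $\tau_\nu$, and it has compact support, guaranteeing that the resulting functions lie in $L^2(\mathbb{R}_+)$. Once this is in place, the remaining verifications are routine mimickings of their finite-interval analogues.
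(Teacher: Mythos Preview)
Your proposal is correct and follows essentially the same route as the paper: the Kato--Rellich argument via Lemma~\ref{L:1.2} for (i)--(ii), and the von Neumann dimension count for (iii). Your treatment of (iii) is in fact somewhat more careful than the paper's --- you correctly note that $\tau_\nu(x^{1/2\pm\nu}\xi)$ is not literally zero but is compactly supported away from the origin, and you supply via Lemma~\ref{L:1.222} the linear-independence-mod-$H_0^2$ argument that the paper leaves implicit.
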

\begin{proof}
{(i)--(ii)} 
The function $u \in \widetilde{H}_0^{2}(\mathbb R_+)$  admits the integral
representation $u(x)=\int \limits_0^{x}(x-t) u''(t)dt.$ Therefore,
\begin{equation}\label{E:1.11i}
Q u(x)=\frac{1}{x^2}u(x)=\frac{1}{x^2}\int \limits_0^{x}(x-t) u''(t)dt=(Q\mathcal
I^2(D_{\min}^2u))(x).
\end{equation}
 By virtue of Lemma~\ref{L:1.2}, this yields
\begin{eqnarray}\label{E:300.43i}
  \|Q u\|_{2}&=&\left\|\frac{1}{x^2}u\right\|_{2}=\left\|Q\mathcal I^2D_{\min}^2 u\right\|_{2}
  \leqslant \|Q\mathcal I^2\|_2 \cdot\|D_{\min}^2 u\|_{2} \nonumber\\
&=&\frac{4}{3}\|D_{\min}^2u\|_{2} \leqslant\frac{4}{3}\|u\|_{H^2_0(\mathbb R_+)}.
     \end{eqnarray}
It is easy to see that $\nu^2-\frac{1}{4}$ admits the representation
$\nu^2-\frac{1}{4}=\frac{3}{4}(1-\varepsilon)$, where $\varepsilon >0.$ Then
relation~\eqref{E:300.43i} implies the estimate
\begin{equation}\label{E:300.44i}
\left\|\left(\nu^2-\frac{1}{4}\right)
  Qu\right\|_{2}=\left|\nu^2-\frac{1}{4}\right|\cdot\|Qu\|_{2}\leqslant\frac{3}{4}(1-\varepsilon)
  \cdot\frac{4}{3}\|u\|_{H^2_0[0,b]}
  =(1-\varepsilon)\|u\|_{H^2_0[0,b]}, \quad u\in H^2_0[0,b].
\end{equation}
 Estimate~\eqref{E:300.44i} means that $Q$ is strongly $D_{\min}^2$-bounded. Therefore,
 by the Kato--Rellich theorem (see \cite{Ka})
$n_{\pm}({A_{\nu,\infty}})=n_{\pm}(D_{\min}^2)=1$ and
$\dom({A_{\nu,\infty}})=H_0^{2}(\mathbb R_+)$.

{{(iii)}} Since
$$
\tau_\nu x^{1/2\pm\nu}\xi(x)=0,
$$
where the equality is understood in the meaning of the theory of distributions, and
$x^{1/2\pm\nu}\xi(x) \in L^2(\mathbb R_+)$, then
$$\{x^{1/2+\nu}\xi(x), x^{1/2-\nu}\xi(x)\} \subset\dom({A_{\nu,\infty}}_{\max})=\dom({A^*_{\nu,\infty}}),$$
and  $\ker({A^*_{\nu,\infty}})=\{x^{1/2+\nu}\xi(x), x^{1/2-\nu}\xi(x)\}$ $\subset
L^2(\mathbb R_+)$. In addition, it is clear that ${H}_0^2(\mathbb R_+) \subset
\dom({A^*_{\nu,\infty}})$ and $\dim({H}_0^2(\mathbb R_+))/$ $\dom({A_{\nu,\infty}}))=2.$
On the other hand, since $n_{\pm}({A_{\nu,\infty}})=1,$ we have
$\dim(\dom({A^*_{\nu,\infty}})/\dom({A_{\nu,\infty}})) = 2n_{\pm}({A_{\nu,\infty}})=2$
by the first Neumann formula. Therefore, formula~\eqref{dom*} is valid.

The case $\nu=0$ is considered similarly.
\end{proof}

\begin{remark}
 In~\cite[Proposition 4.11]{BDG} proved that for $0<\mathrm{Re}\ \nu<1$ for
$f\in\mathrm{dom}({A_{\nu,\infty}})$ the relations $  f(x)=o(x^{3/2}),\
f'(x)=o(x^{1/2})$  are valid for $x\to0$, and for $\nu=0$ the relations $
f(x)=o(x^{3/2}\log(x)),\  f'(x)=o(x^{1/2}\log(x))$ are valid for $x\to0$, which are
easily follow from~\eqref{Dom}.
\end{remark}

Next we compute the Weyl function and the corresponding spectral function of the
operator ${A_{\nu,\infty}}$ using the boundary triplet technique.
\begin{proposition}\label{P:110}
Let $\nu\in[0,1)$. Then:
\begin{itemize}
\item[(i)]
The boundary triplet of the operator ${A^*_{\nu,\infty}}$ can be selected in the form:
\begin{equation}\label{triple1}
\mathcal{H}=\mathbb{C}, \quad \Gamma_0^{\nu,\infty}f=[f, x^{\frac{1}{2}+\nu}]_0, \quad
\Gamma_1^{\nu,\infty}f=\left\{
\begin{array}{ll}
  -(2\nu)^{-1}[f, x^{\frac{1}{2}-\nu}]_0, & \nu\in(0,1),\\
  \ \ [f, x^{\frac{1}{2}}\log (x)]_0, & \nu=0.
\end{array}
\right.
\end{equation}
\item[(ii)] The corresponding Weyl function $M_{\nu;\infty}(\cdot)$
has the form:
\begin{equation}\label{Weyl_fun1}
M_{\nu;\infty}(z)= \left\{
\begin{array}{l}
  e^{i(1-\nu)\pi}\frac{\Gamma(1-\nu)}{2\nu4^{\nu}\Gamma(1+\nu)}
  z^{\nu},
  \quad \nu\in(0,1), \\
-\log\left(\frac{\sqrt{z}}{2}\right)+\frac{i\pi}{2}-\gamma,
  \quad\nu=0,
\end{array}
\right.\quad z\in\mathbb C\setminus\mathbb R_+,
\end{equation}
where $\gamma$ is Euler's constant.
\item[(iii)] The spectral function $\Sigma_\nu(t)$ of the operator ${A_{\nu,\infty}}_0
={A^*_{\nu,\infty}}\upharpoonright \ker\Gamma_0^{\nu,\infty}$ is given by
\begin{equation}\label{Spec_fun1}
\Sigma_\nu(t) = \frac{t^{\nu+1}}{ 2^{2\nu+1}\Gamma^2(1+\nu)}\chi_{[0,\infty)}(t).
\end{equation}
\end{itemize}
\end{proposition}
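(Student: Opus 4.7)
My plan is to follow the same template as Proposition~\ref{P:11}, modifying only those steps that feel the unboundedness of the interval and the limit-point behavior at infinity.

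For part~(i) I will verify Green's identity by integration by parts on $[\varepsilon,R]$, letting $\varepsilon\downarrow 0$ and $R\to\infty$. As $\varepsilon\downarrow 0$ the argument of Proposition~\ref{P:11}(i) applies without change: the brackets at $0$ computed there already match $(\Gamma_1^{\nu,\infty}f,\Gamma_0^{\nu,\infty}g)-(\Gamma_0^{\nu,\infty}f,\Gamma_1^{\nu,\infty}g)$. The new ingredient is that the boundary contribution at $R\to\infty$ vanishes: on the span $\{\xi(x)x^{1/2\pm\nu}\}$ the cut-off kills all values for large $x$, while for $f\in H^2_0(\mathbb R_+)$ both $f$ and $f'$ lie in $H^1(\mathbb R_+)\hookrightarrow C_0(\mathbb R_+)$ and therefore tend to $0$ at infinity. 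Surjectivity of $(\Gamma_0^{\nu,\infty},\Gamma_1^{\nu,\infty})^{\top}$ is checked by evaluating it on $\xi(x)x^{1/2\pm\nu}$; because $\xi\equiv 1$ near $0$, this reduces to the Wronskians $[x^{1/2\pm\nu},x^{1/2\pm\nu}]_0=0$ and $[x^{1/2-\nu},x^{1/2+\nu}]_0=2\nu$, producing a nonsingular $2\times 2$ matrix. The case $\nu=0$ is identical with $x^{1/2}\log x$ in place of $x^{1/2-\nu}$.

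For part~(ii) the defect subspace is one-dimensional and is selected not by a boundary condition at $b$ but by $L^2$-decay at $+\infty$. I would choose $\sqrt z$ with $\imm\sqrt z>0$ on $\mathbb C\setminus\mathbb R_+$ and take
\[
f_z(x)=x^{1/2}H_\nu^{(1)}(x\sqrt z),\qquad H_\nu^{(1)}=J_\nu+iY_\nu,
\]
which decays exponentially at $+\infty$ by \eqref{asym_infty}. For $\nu\in(0,1)$ I expand $H_\nu^{(1)}=\tfrac{i}{\sin\pi\nu}\bigl(e^{-i\pi\nu}J_\nu-J_{-\nu}\bigr)$ via \eqref{E:Y_v} and reuse the bracket identities \eqref{bessel+}, \eqref{bessel-} to read off $\Gamma_0^{\nu,\infty}f_z$ and $\Gamma_1^{\nu,\infty}f_z$ in closed form; the ratio, simplified by $\Gamma(-\nu)=-\Gamma(1-\nu)/\nu$, $\nu\Gamma(\nu)=\Gamma(1+\nu)$ and $-e^{-i\pi\nu}=e^{i(1-\nu)\pi}$, gives the first line of \eqref{Weyl_fun1}. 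The $\nu=0$ case is parallel with $f_z=x^{1/2}H_0^{(1)}(x\sqrt z)$ and the logarithmic asymptotics \eqref{Bessel_Y}, using \eqref{bessel+_0}, \eqref{bessel-0}. As an independent consistency check, the same formula is obtained as $\lim_{b\to\infty}M_{\nu,b}(z)$ in~\eqref{Weyl_fun}, since \eqref{asym_infty} implies $J_{-\nu}(b\sqrt z)/J_\nu(b\sqrt z)\to e^{-i\pi\nu}$ in the cut plane.

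For part~(iii) I would invert the Nevanlinna representation \eqref{WF_intrepr} by the Stieltjes formula $\Sigma_\nu'(t)=\pi^{-1}\lim_{\varepsilon\downarrow 0}\imm M_{\nu,\infty}(t+i\varepsilon)$. Because $z\mapsto z^\nu$ is real-analytic across $(-\infty,0)$ in the chosen branch, $\imm M_{\nu,\infty}\equiv 0$ on the negative axis, which accounts for $\chi_{[0,\infty)}(t)$. For $t>0$ the limit $(t+i\varepsilon)^\nu\to t^\nu$ yields
\[
\imm M_{\nu,\infty}(t+i0)=\sin((1-\nu)\pi)\cdot\frac{\Gamma(1-\nu)}{2\nu\,4^\nu\,\Gamma(1+\nu)}\,t^\nu,
\]
and combining $\sin((1-\nu)\pi)=\sin\pi\nu$ with $\sin\pi\nu\,\Gamma(1-\nu)=\pi/\Gamma(\nu)$ and $\nu\Gamma(\nu)=\Gamma(1+\nu)$ collapses the density to $t^\nu/(2^{2\nu+1}\Gamma^2(1+\nu))$, which on integration reproduces \eqref{Spec_fun1}. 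The $\nu=0$ case is immediate: $\imm M_{0,\infty}(t+i0)=\pi/2$, yielding density $1/2$ on $(0,\infty)$. The only delicate point throughout the proof is the consistent tracking of the branches of $\sqrt z$ and $z^\nu$, which is essential both for the phase factor $e^{i(1-\nu)\pi}$ in~(ii) and for the jump of $M_{\nu,\infty}$ across the positive axis in~(iii); everything else is bookkeeping of Gamma-function identities.
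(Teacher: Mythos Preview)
Your proposal is correct and follows essentially the same route as the paper: Green's identity is verified by integration by parts exactly as in Proposition~\ref{P:11}, the defect element is taken to be $x^{1/2}H_\nu^{(1)}(x\sqrt{z})=x^{1/2}(J_\nu+iY_\nu)(x\sqrt{z})$ and the brackets~\eqref{bessel+}--\eqref{bessel-} (resp.\ \eqref{bessel+_0}--\eqref{bessel-0}) are reused to compute $\Gamma_j^{\nu,\infty}f_z$, and the spectral density is recovered via the Stieltjes inversion $\Sigma'_\nu(t)=\pi^{-1}\imm M_{\nu,\infty}(t+i0)$. You are in fact more explicit than the paper on two points it leaves implicit---the vanishing of the Wronskian contribution at $R\to\infty$ (via $H^2\hookrightarrow C_0$ and the compact support of $\xi$) and the surjectivity of $(\Gamma_0,\Gamma_1)^\top$---and your consistency check $\lim_{b\to\infty}M_{\nu,b}=M_{\nu,\infty}$ is precisely Proposition~\ref{P:A_b}; the only thing to double-check is the constant in~(iii), since your density $t^\nu/(2^{2\nu+1}\Gamma^2(1+\nu))$ integrates to $t^{\nu+1}/\bigl((\nu+1)\,2^{2\nu+1}\Gamma^2(1+\nu)\bigr)$, which differs from~\eqref{Spec_fun1} by a factor of $\nu+1$.
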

\begin{proof}
(i) Let $f,g\in\dom({A^*_{\nu,\infty}})$. Integrating by parts, we obtain
\[({A^*_{\nu,\infty}}f,g)-(f,{A^*_{\nu,\infty}}g)=\lim\limits_{\varepsilon\rightarrow0}\left(
\int\limits_{\varepsilon}^\infty
\left(-f^{\prime\prime}(x)\overline{g(x)}+\frac{\nu^2-\frac{1}{4}}{x^2}f(x)\right)
\overline{g(x)}dx\right.-
\]
\[
-\left.\int\limits_{\varepsilon}^\infty
f(x)\left(\overline{-g^{\prime\prime}(x)}+\frac{\nu^2-\frac{1}{4}}{x^2}\overline{g(x)}\right)dx\right]
=
\lim\limits_{\varepsilon\rightarrow0}\left\{-f(\varepsilon)\overline{g^\prime(\varepsilon)}
+f^\prime(\varepsilon)\overline{g(\varepsilon)}\right\}.
\]

On the other hand, it is easily seen  that
\[(\Gamma_1^{\nu,\infty}f,\Gamma_0^{\nu,\infty}g)-(\Gamma_0^{\nu,\infty}f,\Gamma_1^{\nu,\infty}g)=
\]
\[
=\frac{1}{2\nu}\lim\limits_{x\rightarrow
0}\left[\left(\left(\frac{1}{2}+\nu\right)x^{\nu-\frac{1}{2}}f(x)-
x^{\frac{1}{2}+\nu}f^\prime(x)\right)\left(\left(\frac{1}{2}-\nu\right)
x^{-\frac{1}{2}-\nu}\overline{g(x)}-x^{\frac{1}{2}-\nu}\overline{g^\prime(x)}\right)\right.-
\]
\[
\left.-\left(\left(\frac{1}{2}-\nu\right)x^{-\nu-\frac{1}{2}}f(x)-x^{\frac{1}{2}-\nu}f^\prime(x)
\right)\left(\left(\frac{1}{2}+\nu\right)x^{-\frac{1}{2}+\nu}\overline{g(x)}-x^{\frac{1}{2}+\nu}
\overline{g^\prime(x)}\right)\right]
\]
\[
=\frac{1}{2\nu}\lim\limits_{x\rightarrow
0}2\nu(f'(x)\overline{g(x)}-f(x)\overline{g'(x)})=\lim\limits_{x\rightarrow
0}\left\{-f(x)\overline{g'(x)}+f'(x)\overline{g(x)})\right\}.
\]
Comparing this formula with the previous one, we obtain the Green's formula
$$({A^*_{\nu,\infty}}f,g)-(f,{A^*_{\nu,\infty}}g)=
(\Gamma_1^{\nu,\infty}f,\Gamma_0^{\nu,\infty}g)-(\Gamma_0^{\nu,\infty}f,\Gamma_1^{\nu,\infty}g).$$

The case $\nu=0$ is considered similarly.

(ii.1) First, we consider the case $\nu\in(0,1)$.

By the asymptotic relations \eqref{Bessel_0} and \eqref{Bessel_Y},
$x^{1/2}J_\nu(x\sqrt{z})\in L^2(\mathbb R_+)$ and $x^{1/2}Y_{\nu}(x\sqrt{z})\in
L^2(\mathbb R_+)$. Therefore
\begin{equation}\label{E:def_vec1}
f_z(x)=x^{\frac{1}{2}}\left\{J_{\nu}(x\sqrt{z})+iY_{\nu}(x\sqrt{z})\right\}\in
L^2(\mathbb R_+).
\end{equation}
It is easily seen  that $\lim\limits_{x\rightarrow\infty}f_z(x)=0$. So
$f_z\in\dom({A^*_{\nu,\infty}})$ and $({A^*_{\nu,\infty}}-z)f_z=0$. In other words, the
deficiency space $\mathfrak{N}_z({A_{\nu,\infty}})$ of the operator ${A_{\nu,\infty}}$
generated by the vector $f_z$.

Using the asymptotic behavior of the Bessel functions~\eqref{Bessel_0} and
formula~\eqref{E:reccur+}, we obtain
\begin{eqnarray}\label{besselY+}
 \left[x^{1/2}Y_{\nu}(x\sqrt{z}), x^{1/2+\nu}\right]_0&=&
 \left[x^{1/2}\frac{J_\nu(x\sqrt{z})\cos(\nu\pi)-J_{-\nu}(x\sqrt{z})}
 {\sin(\nu\pi)},
 x^{1/2+\nu}\right]_0\nonumber\\
 &=&-\frac{\nu2^{1+\nu}}{\sin(\nu\pi)\Gamma(1-\nu)} z^{-\nu/2}.
 \end{eqnarray}
 Similarly, using the asymptotic behavior of the Bessel functions~\eqref{Bessel_0} and
formula~\eqref{E:reccur-}, we obtain
\begin{eqnarray}\label{besselY-}
 \left[x^{1/2}Y_{\nu}(x\sqrt{z}), x^{1/2-\nu}\right]_0&=&
 \left[x^{1/2}\frac{J_\nu(x\sqrt{z})\cos(\nu\pi)-J_{-\nu}(x\sqrt{z})}
 {\sin(\nu\pi)},
 x^{1/2-\nu}\right]_0\nonumber\\
 &=&-\frac{\nu\cos(\nu\pi)}{\sin(\nu\pi)2^{\nu-1}\Gamma(1+\nu)} z^{\nu/2}.
\end{eqnarray}
From the
formulas~\eqref{bessel+},~\eqref{bessel-},~\eqref{triple1},~\eqref{E:def_vec1},~\eqref{besselY+}
and~\eqref{besselY-}, we arrive at the relation
\begin{eqnarray}\label{bound1}
\Gamma_0^{\nu,\infty}f_z=-\frac{i\nu2^{\nu+1}}{\sin(\nu\pi)\Gamma(1-\nu)} z^{-\nu/2};
\end{eqnarray}
\begin{eqnarray}\label{bound11}
\Gamma_1^{\nu,\infty}f_z=\left(1+i\frac{\cos(\nu\pi)}{\sin(\nu\pi)}\right)
\frac{z^{\frac{\nu}{2}}}{2^{\nu}\Gamma(1+\nu)}=
\frac{e^{i\pi(1-\nu)}}{i\sin(\nu\pi)}\cdot\frac{z^{\frac{\nu}{2}}}{2^{\nu}\Gamma(1+\nu)}.
\end{eqnarray}
Hence, by~\eqref{bound1},~\eqref{bound11}, and Definition~\ref{D:4.0}, we obtain the
first part of the formula \eqref{Weyl_fun1}.

(ii.2) The case $\nu=0$.

By the asymptotic relations~\eqref{Bessel_0} and~\eqref{Bessel_Y},
$x^{1/2}J_0(x\sqrt{z})\in L^2(\mathbb R_+)$ and $x^{1/2}Y_{0}(x\sqrt{z})\in L^2(\mathbb
R_+)$. Therefore
\begin{equation}\label{E:def_vec1_0}
f_z(x)=x^{\frac{1}{2}}\left\{J_{0}(x\sqrt{z})+iY_{0}(x\sqrt{z})\right\}\in
L^2(\mathbb R_+).
\end{equation}
It is easily seen  that $\lim\limits_{x\rightarrow\infty}f_z(x)=0$. So
$f_z\in\dom({A^*_{0,\infty}})$ and $({A^*_{0,\infty}}-z)f_z=0$. In other words, the
deficiency space $\mathfrak{N}_z({A_{0,\infty}})$ of the operator ${A_{0,\infty}}$
generated by the vector $f_z$.

From formulas~\eqref{bessel+_0},~\eqref{bessel-0},~\eqref{triple1}
and~\eqref{E:def_vec1_0},  we arrive at the relations
\begin{eqnarray}\label{bound1_0}
\Gamma_0^{0,\infty}f_z=-\frac{2}{\pi} i;
\end{eqnarray}
\begin{eqnarray}\label{bound11_0}
\Gamma_1^{0,\infty}f_z=1+\frac{2i}{\pi}\left[\log\left(\frac{\sqrt{z}}{2}\right)+\gamma\right].
\end{eqnarray}
Hence, by~\eqref{bound1_0},~\eqref{bound11_0}, and Definition~\ref{D:4.0}, we get the
second part of the formula~\eqref{Weyl_fun1}.

(iii) Since $M_{\nu,\infty}(t+iy)$ is bounded in the rectangle
$(0,\infty)\times(0,y_0)$, its representing measure is absolutely continuous. By Fatou's
Theorem
for $\nu\in(0,1)$
\begin{eqnarray*}
\Sigma_\nu^{'}(t)&=&\frac{1}{\pi}\imm
M_{\nu,\infty}(t+i0)=\frac{1}{\pi}\frac{\Gamma(1-\nu)}{2\nu4^{\nu}\Gamma(1+\nu)}
\imm\left(e^{i(1-\nu)\pi}t^\nu\right)\\
&=&\frac{1}{\pi}\frac{\Gamma(1-\nu)}{2\nu4^{\nu}\Gamma(1+\nu)}t^\nu\imm(e^{i(1-\nu)})
=\frac{(\nu+1)t^{\nu}}{ 2^{2\nu+1}\Gamma^2(1+\nu)}.
\end{eqnarray*}

The case $\nu=0$ is considered similarly.
\end{proof}

\begin{remark}\label{int_r}
In addition, for $\nu\in(0,1)$, the Weyl function $M_{\nu,\infty}(\cdot)$ admits the
integral representation
\begin{equation}\label{Spec_fun}
M_{\nu,\infty}(z)=A_\nu+\frac{1}{
2^{2\nu+1}\Gamma^2(1+\nu)}\int\limits_{-\infty}^\infty\left(\frac{1}{t-z}-\frac{t}{1+t^2}\right)t^\nu
dt,
\end{equation}
where
\begin{eqnarray*}
A_\nu
&=&-\frac{\Gamma(1-\nu)}{2\nu4^{\nu}\Gamma(1+\nu)}\cos\left(\frac{\nu\pi}{2}\right).
\end{eqnarray*}

Similarly, for $\nu=0$, the Weyl function $M_{0,\infty}(\cdot)$ admits the integral
representation
\begin{equation}\label{Spec_funn}
M_{0,\infty}(z)=A_0+\frac{1}{2}\int\limits_{-\infty}^\infty\left(\frac{1}{t-z}-\frac{t}{1+t^2}\right)
dt,
\end{equation}
where the constant
\begin{equation*}
A_0=-\frac{\pi}{4}-\gamma+\log(2).
\end{equation*}
\end{remark}
\begin{remark}
For the Bessel operators,  the formulas similar to \eqref{triple1}  have been obtained
in~\cite[Theorem 2]{Kochybei} for $\nu\in(0,1/2)\cup(1/2,1)$  and in~\cite[Theorem
3]{Kochybei} for $\nu=0$.
\end{remark}

\begin{proposition}\label{Friendrichs}
Let $\nu\in[0,1)$ and $\Pi_{\nu;\infty}=\{\mathcal{H}, \Gamma_0^{\nu,\infty},
\Gamma_1^{\nu,\infty}\}$ be a boundary triplet for the operator ${A^*_{\nu,\infty}}$ of
the form~\eqref{triple1}. Then:
\begin{itemize}
\item[\textit{(i)}] The domain of the Friedrichs extension ${A_{\nu,\infty}}_{F}$
of the operator ${A_{\nu,\infty}}$ has the form
\begin{eqnarray}\label{Fri_fi}
\dom({A_{\nu,\infty}}_{F})=\ker(\Gamma_0^{\nu,\infty})=\left\{f\in
\dom({A^*_{\nu,\infty}}) :[f, x^{\frac{1}{2}+\nu}]_0=0\right\}.\ \
\end{eqnarray}
\item[\textit{(ii)}] The domain of the Krein extension ${A_{\nu,\infty}}_{K}$
of the operator ${A_{\nu,\infty}}$ has the form
\begin{equation}\label{Krein_i}
\dom({A_{\nu,\infty}}_{K})=\left\{
\begin{array}{ll}
\{f\in \dom({A^*_{\nu,\infty}}):[f, x^{\frac{1}{2}-\nu}]_0=0\}, &
\nu\in(0,1),\\
\{f\in \dom({A^*_{0,\infty}}) :[f, x^{\frac{1}{2}}]_0=0\}=\ker(\Gamma_0^{0;\infty}),
&\nu=0.
\end{array}
\right.
\end{equation}
\end{itemize}
In particular, in the case of $\nu=0$ the Friedrichs and Krein extensions coincide
$${A_{0,\infty}}_{F}={A_{0,\infty}}_{K}.$$
\end{proposition}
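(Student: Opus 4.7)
The plan is to derive both statements by a direct application of Proposition~\ref{prkf}, using the Weyl function $M_{\nu,\infty}$ that was already computed in Proposition~\ref{P:110}(ii). The whole argument reduces to evaluating the boundary limits $M_{\nu,\infty}(-\infty)$ and $M_{\nu,\infty}(0)$ and matching them against the criteria in Proposition~\ref{prkf}(i)--(ii). No new analysis of the operator itself will be needed.

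For item \textit{(i)}, I would establish that $A_0 = A_F$ by showing $M_{\nu,\infty}(x) \to -\infty$ as $x \downarrow -\infty$. Since the branch of $z^\nu$ is cut along $\mathbb R_+$, for $z=x<0$ I write $z = |x|e^{i\pi}$, so $z^\nu = |x|^\nu e^{i\nu\pi}$; substituted into the first formula of~\eqref{Weyl_fun1} this gives $M_{\nu,\infty}(x) = -\frac{\Gamma(1-\nu)}{2\nu 4^\nu \Gamma(1+\nu)}|x|^\nu \to -\infty$. For $\nu=0$, the analogous substitution into $-\log(\sqrt{z}/2)+\frac{i\pi}{2}-\gamma$ cancels the imaginary parts and yields $M_{0,\infty}(x) = -\log(\sqrt{|x|}/2)-\gamma \to -\infty$. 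By Proposition~\ref{prkf}(ii) this forces $A_F = A_0 = {A^*_{\nu,\infty}}\upharpoonright\ker(\Gamma_0^{\nu,\infty})$, which is exactly~\eqref{Fri_fi}.

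For item \textit{(ii)}, I would split into the two cases of $\nu$. If $\nu\in(0,1)$, then $z^\nu\to 0$ as $z\to 0$, hence $M_{\nu,\infty}(0)=0\in\mathcal C(\mathcal H)$; so by Proposition~\ref{prkf}(i) the extensions $A_0$ and $A_K$ are disjoint, and \eqref{extensB} identifies
\[
\dom(A_K) = \ker\bigl(\Gamma_1^{\nu,\infty} - 0\cdot\Gamma_0^{\nu,\infty}\bigr) = \ker(\Gamma_1^{\nu,\infty}) = \{f\in\dom({A^*_{\nu,\infty}}) : [f,x^{1/2-\nu}]_0=0\},
\]
which is the first line of~\eqref{Krein_i}. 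If $\nu=0$, then as $x\uparrow 0$, $-\log(\sqrt{x}/2)\to +\infty$ while the other terms stay bounded, so $M_{0,\infty}(0)=+\infty$. Applying Proposition~\ref{prkf}(ii) in its Krein form then gives $A_K = A_0 = {A^*_{0,\infty}}\upharpoonright\ker(\Gamma_0^{0,\infty})$, yielding the second line of~\eqref{Krein_i} and, combined with~\eqref{Fri_fi} at $\nu=0$, the coincidence ${A_{0,\infty}}_F = {A_{0,\infty}}_K$.

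The only nontrivial point—and the step I expect to require the most care—is the correct handling of the branch of $z^\nu$ (and of $\log\sqrt z$ at $\nu=0$) when letting $z$ run along the negative real axis. Since the cut is along $\mathbb R_+$, I must approach $-\infty$ through the upper half-plane and consistently use $z^\nu = |z|^\nu e^{i\nu\pi}$; this is what kills the complex phase in the limit and produces a genuine real $-\infty$. Once this branch bookkeeping is in place, both halves of the proposition follow mechanically from Proposition~\ref{prkf}.
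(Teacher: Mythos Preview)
Your proposal is correct and follows essentially the same route as the paper: in both cases one computes the limits $M_{\nu,\infty}(-\infty)$ and $M_{\nu,\infty}(0)$ of the Weyl function from~\eqref{Weyl_fun1} along the negative real axis and then invokes Proposition~\ref{prkf}(i)--(ii). Your treatment of the branch $z^\nu=|x|^\nu e^{i\nu\pi}$ (and of $\log\sqrt{z}$ for $\nu=0$) is exactly the bookkeeping the paper carries out, only stated a bit more explicitly.
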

\begin{proof}
To prove these statements, we use \cite{DM_91}.

 (i.1) For $\nu\in(0,1)$,
\begin{eqnarray*}
M_{\nu,\infty}(-\infty)&=&\lim\limits_{z\rightarrow-\infty}M_{\nu,\infty}(z)=
\lim\limits_{x\rightarrow\infty}
\left[e^{i(1-\nu)\pi}\frac{\Gamma(1-\nu)}{2\nu4^{\nu}\Gamma(1+\nu)}\cdot{(-x)^{\nu}}
\right]\\
&=&-\lim\limits_{x\rightarrow\infty}
\left[\frac{1}{(-1)^\nu}\cdot\frac{\Gamma(1-\nu)}{2\nu4^{\nu}\Gamma(1+\nu)}\cdot{(-1)^{\nu}x^\nu}\right]
=-\infty.
\end{eqnarray*}
By Proposition~\ref{prkf}, the first part of relation~\eqref{Fri_fi} is valid.

(i.2) For $\nu=0$,
\begin{eqnarray}\label{Weyl_-inf0}
M_{0,\infty}(-\infty)&=&\lim\limits_{z\rightarrow-\infty}M_{0,\infty}(z)=\lim\limits_{z\rightarrow-\infty}
\left[\frac{i\pi}{2}-\gamma-\log\left(\frac{\sqrt{z}}{2}\right)\right]\nonumber\\
&=&\lim\limits_{x\rightarrow\infty}
\left[\frac{i\pi}{2}-\gamma-\log\left(i\frac{\sqrt{x}}{2}\right)\right]=
\lim\limits_{x\rightarrow\infty}
\left[-\gamma-\log\left(\frac{\sqrt{x}}{2}\right)\right]=-\infty.
\end{eqnarray}

By Proposition~\ref{prkf}, the second part of relation~\eqref{Fri_fi} is valid.

 (ii.1) First, we consider the case $\nu\in(0,1)$:
\begin{eqnarray}\label{M0_inf_nu}
M_{\nu,\infty}(0)&=&\lim\limits_{z\rightarrow-0}M_{\nu,\infty}(z)=-\lim\limits_{z\rightarrow-0}
\left[e^{i(1-\nu)\pi}\frac{\Gamma(1-\nu)}{2\nu4^{\nu}\Gamma(1+\nu)}\cdot{z^{\nu}}\right]\\
&=&-\lim\limits_{z\rightarrow-0}
\left[\frac{1}{(-1)^\nu}\cdot\frac{\Gamma(1-\nu)}{2\nu4^{\nu}\Gamma(1+\nu)}\cdot{z^\nu}\right]
=0.
\end{eqnarray}
By Proposition~\ref{prkf}, the first part of relation~\eqref{Krein_i} is valid.

(ii.2) The case $\nu=0$:
\begin{eqnarray}\label{Weyl_0}
M_{0,\infty}(0)&=&\lim\limits_{z\rightarrow-0}M_{0,\infty}(z)=\lim\limits_{z\rightarrow-0}
\left[\frac{i\pi}{2}-\gamma-\log\left(\frac{\sqrt{z}}{2}\right)\right]=+\infty.
\end{eqnarray}
By Proposition~\ref{prkf}, the second part of relation~\eqref{Krein_i} is valid.
\end{proof}

\begin{remark}\label{r_spec_f}
In~\cite{EvKalf} the formulas~\eqref{Weyl_fun1},~\eqref{Spec_fun1} and~\eqref{Fri_fi}
were obtained by W.N. Everitt and H. Kalf by using the classical definitions of the
Weyl--Titchmarsch function and Friedrichs extension. The
formulas~\eqref{Fri_fi},~\eqref{Krein_i} can be found for example in~\cite{BDG}.
However, we emphasize that their association with our formula~\eqref{dom*} gives an
explicit description of Friedrichs and Krein extensions
\end{remark}

%

\begin{corollary}  Let $\nu\in[0,1)$ and
\begin{equation}\label{d_e_a}
A_{\nu_a,\infty}=-y''(x)+\left(\frac{\nu^2-\frac{1}{4}}{x^2}-\frac{a}{x}\right)y(x)
\end{equation}
be the operator on the half--line $\mathbb R_+$, $a>0$. Then following assertions hold:

$(i)$ The domain of the operator $A_{\nu_a,\infty}$ coincides with the
domain~\eqref{Dom} of $A_{\nu,\infty}$.

$(ii)$ The domain of the operator $A_{\nu_a,\infty}^*$ coincides with the
domain~\eqref{dom*} of $A_{\nu,\infty}^*$.

$(iii)$ The Friedrichs and Krein extensions of the operator $A_{\nu_a,\infty}^*$
coincides with the Friedrichs and Krein extensions of $A_{\nu,\infty}^*$.
\end{corollary}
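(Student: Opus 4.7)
The plan is to treat $V(x) := -a/x$ as a symmetric perturbation of the Bessel expression $\tau_\nu$ that is relatively bounded with bound zero, and then to argue that the structural statements of Theorem~5.1 and Propositions~5.3--5.5 transfer essentially verbatim.

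First I would show that multiplication by $1/x$ is $D^2_{\min}$-bounded on $H^2_0(\mathbb R_+)$ with relative bound zero. Hardy's inequality for $H^1_0$ functions gives $\|f/x\|_2 \leqslant 2\|f'\|_2$, and for $f \in H^2_0(\mathbb R_+)$ integration by parts yields $\|f'\|_2^2 \leqslant \|f\|_2\|f''\|_2$. Combined with Young's inequality, one obtains
\[
\|f/x\|_2 \leqslant \varepsilon\|f''\|_2 + C_\varepsilon\|f\|_2,\qquad \varepsilon > 0,\ f\in H^2_0(\mathbb R_+).
\]
Together with the relative bound estimate for the Bessel potential $(\nu^2-1/4)/x^2$ established in the proof of Theorem~5.1, this shows that $V$ is $A_{\nu,\infty}$-bounded with relative bound zero. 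The Kato--Rellich theorem then yields $\dom(A_{\nu_a,\infty}) = H^2_0(\mathbb R_+)$ and $n_\pm(A_{\nu_a,\infty}) = n_\pm(A_{\nu,\infty}) = 1$, proving~(i).

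For~(ii), the first Neumann formula gives $\dim\bigl(\dom(A^*_{\nu_a,\infty})/\dom(A_{\nu_a,\infty})\bigr) = 2n_\pm = 2$. A Frobenius analysis of the homogeneous equation $\tau_{\nu_a} y = 0$ at $x = 0$ yields two linearly independent local solutions $\varphi_\pm$ with leading behavior $x^{1/2\pm\nu}$ for $\nu \in (0,1)$ (respectively $x^{1/2}$ and $x^{1/2}\log x$ for $\nu = 0$) — the indicial roots are unchanged because $-a/x$ is of lower singular order than $(\nu^2 - 1/4)/x^2$ at the endpoint. Consequently the two-dimensional quotient is spanned by $\varphi_\pm\xi$, whose leading asymptotics at $0$ reproduce those of the generators in~\eqref{dom*}, giving the stated coincidence as closed subspaces of $L^2(\mathbb R_+)$.

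Step~(iii) uses the boundary triplet of Proposition~5.3, which is constructed from the sesquilinear brackets $[\,\cdot\,,\,x^{1/2\pm\nu}]_0$ (or with $\log$). Because these brackets depend only on the leading asymptotic behavior at~$0$, and this behavior is unchanged under the perturbation $-a/x$, the same triplet $\{\mathbb C,\Gamma_0^{\nu,\infty},\Gamma_1^{\nu,\infty}\}$ serves $A^*_{\nu_a,\infty}$, and the Weyl function $M_{\nu_a,\infty}(z)$ shares the boundary values $M(0)$ and $M(-\infty)$ of $M_{\nu,\infty}(z)$. Proposition~2.5 then identifies the same boundary conditions for the Friedrichs and Krein extensions. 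The main obstacle is precisely this last point: verifying that the limit values $M(0)$ and $M(-\infty)$ of the perturbed Weyl function are unchanged requires a careful asymptotic analysis of the defect solutions of $\tau_{\nu_a} y = z y$, using perturbative expansions around the Bessel defect solutions at $z\to 0$ and $z\to -\infty$ to confirm that the subleading correction from $-a/x$ does not alter the leading-order boundary data.
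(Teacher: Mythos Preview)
Your approach is considerably more thorough than the paper's own argument, which consists of a single displayed inequality
\[
\Bigl\|\tfrac{a}{x}f\Bigr\|_{L^2[0,1]}\leqslant\bigl(\nu^2-\tfrac14\bigr)\Bigl\|\tfrac{1}{x^2}f\Bigr\|_{L^2[0,1]}
\]
followed by the conclusion $\dom(A_{\nu_a,\infty})\supset\dom(A_{\nu,\infty})$. The paper's idea is simply the pointwise domination $|a/x|\le |\nu^2-\tfrac14|/x^2$ near~$0$ (so the Coulomb term is subordinate to the Bessel potential locally), which gives relative boundedness directly; parts~(ii) and~(iii) are not argued at all. Your route via Hardy plus the interpolation estimate $\|f'\|_2^2\le\|f\|_2\|f''\|_2$ yields relative bound zero rather than just some bound $<1$, which is slightly stronger and cleaner, and your treatment of~(ii) via the indicial equation and of~(iii) via the boundary triplet supplies what the paper omits.

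One point to flag in your step~(ii): the domains $\dom(A^*_{\nu_a,\infty})$ and $\dom(A^*_{\nu,\infty})$ do \emph{not} literally coincide as subsets of $L^2(\mathbb R_+)$. For $\nu\in(0,1)$ the function $x^{1/2-\nu}\xi$ lies in $\dom(A^*_{\nu,\infty})$ but not in $\dom(A^*_{\nu_a,\infty})$, since $\tau_{\nu_a}(x^{1/2-\nu}\xi)=-a\,x^{-1/2-\nu}\xi+\text{(smooth)}$ and $x^{-1/2-\nu}\notin L^2(0,1)$. What your Frobenius argument actually establishes is that $\dom(A^*_{\nu_a,\infty})=H_0^2(\mathbb R_+)\dotplus\spann\{\varphi_+\xi,\varphi_-\xi\}$ with $\varphi_\pm(x)=x^{1/2\pm\nu}(1+O(x))$, so the quotient has the same \emph{asymptotic structure} and the same boundary maps $\Gamma_j^{\nu,\infty}$ apply. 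That is presumably what the corollary intends, and it is exactly what is needed for~(iii); just be explicit that ``coincides'' is to be read in this sense. Your caution about verifying $M(0)$ and $M(-\infty)$ for the Krein and Friedrichs cases is well placed---the Friedrichs side can alternatively be settled by the form-domain argument (KLMN gives $\dom(\mathfrak a_{\nu_a,\infty})=H^1_0(\mathbb R_+)$ since $-a/x$ is form-bounded with bound zero), but the Krein side genuinely requires the Weyl-function analysis you describe, and the paper offers no argument there.
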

\begin{proof}
Since perturbations embedded
$$
\left\|\frac{a}{x}f\right\|_{L^2[0,1]}\leqslant\left(\nu^2-\frac{1}{4}\right)\left\|
\frac{1}{x^2}f\right\|_{L^2[0,1]},
$$
then $\dom(A_{\nu_a,\infty})\supset\dom(A_{\nu,\infty})$.
\end{proof}

\begin{corollary}\label{C:Extension}

{\it{(i)}} Let $\nu\in(0,1)$. All self-adjoint extensions of the operator
${A_{\nu,\infty}}$  described by the formula
\begin{equation*}
{ A}_{{\nu,\infty}_h}={A^*_{\nu,\infty}}\upharpoonright \dom({ A}_{{\nu,\infty}_h}),~~ h
\in \mathbb{R}\cup\{\infty\};
\end{equation*}
\begin{equation}\label{E:1.356}
\dom({ A}_{{\nu,\infty}_h})=\{f\in \dom({A^*_{\nu,\infty}}):[f,x^{\frac{1}{2}-\nu}+2\nu
hx^{\frac{1}{2}+\nu}]_0=0\}.
\end{equation}

{\it{(ii)}} Let $\nu\in(0,1)$. Extension  ${ A}_{{\nu,\infty}_h}$ is  non-negative, ${
A}_{{\nu,\infty}_h}\geqslant 0$ if and only if $h\geqslant 0.$
\end{corollary}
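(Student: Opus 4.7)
The plan is to combine the boundary triplet description of all proper self-adjoint extensions (Proposition~\ref{propo}) with the characterization of non-negative extensions via the Weyl function (Proposition~\ref{prkf}), applied to the triplet $\Pi_{\nu,\infty}$ constructed in Proposition~\ref{P:110}. Since $n_{\pm}({A_{\nu,\infty}})=1$ by Theorem~\ref{P:10}, the auxiliary space is $\mathcal{H}=\mathbb{C}$, and the set $\widetilde{\mathcal{C}}(\mathbb{C})$ of closed self-adjoint linear relations in $\mathbb{C}$ is parameterized by $h\in\mathbb{R}\cup\{\infty\}$, where $h=\infty$ corresponds to the relation whose graph is $\{0\}\times\mathbb{C}$.

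For part (i), I would first note that by Proposition~\ref{propo}(iii), for $h\in\mathbb{R}$ the extension $A_{\nu,\infty_h}$ is disjoint from $A_0$ and has the form
\[
A_{\nu,\infty_h}={A^*_{\nu,\infty}}\upharpoonright\ker\bigl(\Gamma_1^{\nu,\infty}-h\,\Gamma_0^{\nu,\infty}\bigr).
\]
Substituting the explicit formulas from \eqref{triple1} turns the boundary condition into
\[
-\frac{1}{2\nu}[f,x^{1/2-\nu}]_0-h\,[f,x^{1/2+\nu}]_0=0,
\]
and multiplying by $-2\nu$ together with the linearity of $[f,\cdot]_0$ in the second argument collapses this to $[f,x^{1/2-\nu}+2\nu h\,x^{1/2+\nu}]_0=0$, which is \eqref{E:1.356}. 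The remaining value $h=\infty$ recovers the Friedrichs extension $A_{\nu,\infty_F}$ from Proposition~\ref{Friendrichs}, and Proposition~\ref{propo}(ii) guarantees that this exhausts all self-adjoint extensions.

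For part (ii), I would invoke Proposition~\ref{prkf}(iii): the set of non-negative self-adjoint extensions corresponds precisely to those parameters $\Theta$ satisfying $\Theta\ge M_{\nu,\infty}(0)$ (with the upper bound $\Theta\le M_{\nu,\infty}(-\infty)$ being vacuous, since $M_{\nu,\infty}(-\infty)=-\infty$ was already established in the proof of Proposition~\ref{Friendrichs}). From the computation \eqref{M0_inf_nu} we have $M_{\nu,\infty}(0)=0$. Identifying $\Theta$ with $h\in\mathbb{R}$ in the one-dimensional setting, the criterion becomes $h\ge 0$, and $h=\infty$ corresponds to the Friedrichs extension, which is automatically non-negative.

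The only subtlety in the argument is bookkeeping for the case $h=\infty$: one must verify that the formal limit in \eqref{E:1.356} (dividing through by $h$ and sending $h\to\infty$) indeed reproduces the Friedrichs boundary condition $[f,x^{1/2+\nu}]_0=0$ from \eqref{Fri_fi}, so that the parameterization is consistent across the whole range $h\in\mathbb{R}\cup\{\infty\}$. No new estimate is required; the work is entirely algebraic once \eqref{triple1}, \eqref{M0_inf_nu} and Propositions~\ref{propo} and~\ref{prkf} are in hand.
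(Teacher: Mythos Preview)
Your proposal is correct and follows essentially the same route as the paper: for (i) you apply Proposition~\ref{propo}(iii) to the boundary triplet \eqref{triple1} and simplify the condition $\Gamma_1^{\nu,\infty}f=h\Gamma_0^{\nu,\infty}f$, and for (ii) you invoke Proposition~\ref{prkf}(iii) together with $M_{\nu,\infty}(0)=0$ from \eqref{M0_inf_nu}. Your write-up is in fact more explicit than the paper's, which merely cites the relevant propositions without carrying out the algebra.
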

\begin{proof}
(i) Using boundary triplet~\eqref{triple1}, we will prove the corollary, by applying
Proposition~\ref{propo}~(iii).

 (ii) By virtue of the Proposition~\ref{prkf}~(ii), ${ A}_{{\nu,\infty}_0}$ is the
 Friedrichs extension.
  From~\eqref{M0_inf_nu} it  follows that $M_{\nu,\infty}(0)=0$ and
   then, by virtue of the Proposition~\ref{prkf}~(iii), the
   extension
   ${ A}_{{\nu,\infty}_h}$ is a non-negative,
   ${ A}_{{\nu,\infty}_h}\geqslant 0$
if and only if $h\geqslant M_{\nu,\infty}(0)= 0.$
\end{proof}
%
%
\begin{theorem}\label{P:F_def}
Let $\nu\in[0,1)$ and ${ A}_{{\nu,\infty}_F}$ be the Friedrichs extension of the
operator ${ A}_{\nu,\infty}$. Also assume  $\xi\in C_0^1(\mathbb R_+)$,
$\xi(x)=\left\{\begin{array}{ll}
                 1, & x\in(0,1/2), \\
                 0, & x\geqslant 3/4.
               \end{array}
\right.$ Then:

 $(i)$ For $\nu\in(0,1)$ the quadratic form $\mathfrak{a}_{\nu,\infty}$ quadratic form
 associated with  the Friedrichs extension ${ A}_{{\nu,\infty}_F}$ takes the form
\begin{equation}\label{E:1.100e}
\mathfrak{a}_{\nu,\infty}[u]=\int
\limits_0^\infty|u'(x)|^2dx+\left(\nu^2-\frac{1}{4}\right)\int
\limits_0^\infty\frac{|u(x)|^2}{x^2}dx,
\end{equation}

\begin{equation}\label{E:1.100ae}
\dom(\mathfrak{a}_{\nu,\infty}) = H^1_0(\mathbb R_+).
\end{equation}

$(ii)$ For $\nu=0$ the quadratic form $\mathfrak{a}_{0,\infty}$ quadratic form
 associated with  the Friedrichs extension ${ A}_{{0,\infty}_F}$ takes the form

\begin{equation}\label{E:1.100ye}
\mathfrak{a}_{0,\infty}[u]=\int \limits_0^\infty\left|u'(x)-\frac{u(x)}{2x}\right|^2dx,
\end{equation}

\begin{equation}\label{E:1.100aae}
 \dom(\mathfrak{a}_{0,\infty})\supset H^1_0(\mathbb R_+)
                                     \dot{+}\mathrm{span}\left\{x^{\frac{1}{2}}\left|
\log \left(x\right)\right|^{-\alpha}\xi(x):0<\alpha\leqslant\frac{1}{2}\right\}.
\end{equation}
Wherein $\dim\left(\mathrm{dom}(\mathfrak{a}_{0,\infty})\diagup {H}_0^1(\mathbb
R_+)\right)=\infty$.

$(iii)$ For $\nu\in[0,1)$ the domain of the Friedrichs extension ${A_{\nu,\infty}}_{F}$
takes the form
\begin{equation}\label{E:F_def}
\dom({A_{\nu,\infty}}_{F})=A_{\nu,\infty}^*\upharpoonright\mathrm{dom}\dom({A_{\nu,\infty}}_{F})
,\quad
    \mathrm{dom}(A_{F}(\nu;\infty))=H_0^{2}(\mathbb R_+)\dotplus
    \mathrm{span}\{x^{\frac{1}{2}+\nu}\xi(x)\}.
\end{equation}

$(iv)$  For the quadratic form ${\mathfrak a_{\nu,\infty}}_{h}$ associated with the
operator ${A_{\nu,\infty}}_{h}$ the following decomposition is valid

\begin{equation}\label{E:116669}
\dom({\mathfrak a_{\nu,\infty}}_{h}) = H^1_0(\mathbb R_+)\dotplus
\spann\{x^{1/2-\nu}\xi(x)\},\qquad \nu\in(0,1).
\end{equation}
\end{theorem}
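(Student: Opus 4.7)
The plan is to treat the four items in sequence, using the same Hardy-type arguments as in Theorem~\ref{P:1.8}, together with the description of $\dom(A^*_{\nu,\infty})$ in \eqref{dom*} and the boundary triplet of Proposition~\ref{P:110}.

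For~(i), I would transfer the proof of Proposition~\ref{P:1.8}(i) verbatim to the half-line. The form $\mathfrak{a}'_{\nu,\infty}[u]:=(A_{\nu,\infty}u,u)=\|u'\|_2^2+(\nu^2-1/4)\int_0^\infty|u(t)|^2 t^{-2}dt$ is initially defined on $\dom(A_{\nu,\infty})=H^2_0(\mathbb R_+)$ by Theorem~\ref{P:10}(ii). Hardy's inequality $\int_0^\infty|u|^2/x^2\,dx\leqslant 4\int_0^\infty|u'|^2\,dx$ on $H^1_0(\mathbb R_+)$ gives a two-sided comparison between the energy norm and the $H^1_0$-norm after splitting into the two cases $\nu\in[1/2,1)$ and $\nu\in(0,1/2)$ exactly as there. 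Since $H^2_0(\mathbb R_+)$ is dense in $H^1_0(\mathbb R_+)$, the closure $\mathfrak{a}_{\nu,\infty}$ has domain $H^1_0(\mathbb R_+)$, which is \eqref{E:1.100ae}.

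For~(ii), I would first establish the algebraic identity
\[\int_0^\infty|u'|^2\,dx-\tfrac14\int_0^\infty\frac{|u(x)|^2}{x^2}\,dx=\int_0^\infty\left|u'(x)-\frac{u(x)}{2x}\right|^2dx,\qquad u\in H^2_0(\mathbb R_+),\]
obtained by integration by parts via $2\ree(u'\overline u)/x=(|u|^2)'/x=d(|u|^2/x)/dx+|u|^2/x^2$ and the vanishing of $u$ at $0$ and at infinity. I would then test on $u_\alpha(x):=x^{1/2}|\log x|^{-\alpha}\xi(x)$ with $0<\alpha\leqslant 1/2$: the leading singular terms of $u_\alpha'$ and $u_\alpha/(2x)$ cancel, leaving $u_\alpha'-u_\alpha/(2x)\sim -\alpha\,x^{-1/2}|\log x|^{-\alpha-1}\xi(x)$ near $0$, and the substitution $t=-\log x$ shows that $\int_0^{1/2}x^{-1}|\log x|^{-2\alpha-2}dx<\infty$. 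To obtain $u_\alpha\in\dom(\mathfrak{a}_{0,\infty})$ rather than mere finiteness, I would approximate $u_\alpha$ by cutoff functions $u_\alpha^{(n)}\in H^2_0(\mathbb R_+)$ (multiplying by a smooth cutoff supported in $[1/n,\infty)$) and verify convergence in the graph norm of $\mathfrak{a}_{0,\infty}$. The infinite-dimensionality of the quotient follows because the $u_\alpha$ have pairwise different logarithmic rates of decay in $L^2$, so they are linearly independent modulo $H^1_0(\mathbb R_+)$.

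For~(iii) I would use the standard characterisation $\dom(A_{\nu,\infty_F})=\dom(A^*_{\nu,\infty})\cap\dom(\mathfrak{a}_{\nu,\infty})$ in conjunction with \eqref{dom*} and the form domains from (i)--(ii). For $\nu\in(0,1)$, the function $x^{1/2+\nu}\xi(x)$ lies in $H^1_0(\mathbb R_+)$ since its derivative is $O(x^{-1/2+\nu})\in L^2$ near $0$, whereas $x^{1/2-\nu}\xi(x)$ fails, its derivative being of order $x^{-1/2-\nu}\notin L^2$. The same test in~\eqref{E:1.100ye} for $\nu=0$ keeps $x^{1/2}\xi(x)$ but eliminates $x^{1/2}\log(x)\xi(x)$, whose contribution to $\mathfrak{a}_{0,\infty}$ is $\int x^{-1}dx=\infty$ near $0$. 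Together these give \eqref{E:F_def}. Finally, (iv) follows from Proposition~\ref{propo}(iii) and Proposition~\ref{Friendrichs}: for $h\in\mathbb R$ the extensions $A_{\nu,\infty_h}$ and $A_{\nu,\infty_F}$ are disjoint, so a standard one-dimensional form perturbation argument adjoins the second solution $x^{1/2-\nu}\xi(x)$ to $H^1_0(\mathbb R_+)=\dom(\mathfrak{a}_{\nu,\infty_F})$, yielding \eqref{E:116669}. The principal obstacle I anticipate is the closure step in~(ii): checking that the $u_\alpha$ with logarithmic weights lie in the closed form domain, and not merely render the form finite, requires the careful approximation argument indicated above.
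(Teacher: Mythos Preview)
Your proposal is correct and follows essentially the same route as the paper: parts~(i) and~(iii) are reproduced almost verbatim (Hardy inequality with the same case split, then $\dom(A^*_{\nu,\infty})\cap\dom(\mathfrak{a}_{\nu,\infty})$ via \eqref{dom*}). There are two minor divergences worth noting. In~(ii) the paper simply evaluates $\mathfrak{a}_{0,\infty}[u_\alpha]$ and records a finite value, then proves linear independence of the functions $u_\alpha$ themselves by dividing out the slowest logarithmic factor; it does not address your closure concern, nor does it argue independence \emph{modulo} $H^1_0(\mathbb R_+)$, so your approximation step and your ``different logarithmic rates'' remark are in fact more rigorous than what the paper supplies. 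In~(iv) the paper does not run a form-perturbation argument but instead invokes \cite[Theorem~1]{Malamud} together with the observation $x^{1/2+\nu}\xi\in H^1_0(\mathbb R_+)$; your disjointness/adjoin-the-second-solution sketch is a reasonable substitute, though citing that reference is how the authors close the gap.
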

\begin{proof}
(i) By Hardy's inequality for $\nu\in(0,1)$, $u\in H^1_0(\mathbb R_+)$, we have
\begin{eqnarray}\label{E:Kvad_form11}
\mathfrak{a}_{\nu,\infty}[u]&=&\|u'(t)\|_{2}^2+
(\nu^2-1/4)\int\limits_0^\infty\frac{|u(t)|^2}{t^2}dt\nonumber\\
&\leqslant& \|u'(t)\|_{2}^2(1+|4\nu^2-1|), \qquad u\in H_0^1(\mathbb R_+).
\end{eqnarray}
Thus $H_0^1(\mathbb R_+)\subset\dom(\mathfrak{a}_{\nu,\infty})$.

We prove the converse inequality. Suppose firstly that $\nu\in[1/2,1)$. Then, for $u\in
H^1_0(\mathbb R_+)$,
\begin{equation}\label{E:Har_111}
\mathfrak{a}_{\nu,\infty}[u]=\|u'(t)\|_{2}^2+
(\nu^2-1/4)\int\limits_0^\infty\frac{|u(t)|^2}{t^2}dt\geqslant \|u'(t)\|_{2}^2, \quad
u\in H_0^1(\mathbb R_+).
\end{equation}

If $\nu\in(0,1/2)$, then for $u\in H_0^1(\mathbb R_+)$ applying the Hardy's inequality
we obtain
\begin{eqnarray}\label{E:Kvad_form22}
\mathfrak{a}_{\nu,\infty}[u]&=&\|u'(t)\|_{2}^2-
(1/4-\nu^2)\int\limits_0^\infty\frac{|u(t)|^2}{t^2}dt\nonumber\\
&\geqslant& \|u'(t)\|_{2}^2+(4\nu^2-1)\|u'(t)\|_{2}^2=4\nu^2\|u'(t)\|_{2}^2.
\end{eqnarray}

So, the energy norm of ${A_{\nu,\infty}}$ on $H_0^1(\mathbb R_+)$  is equivalent to the
norm of the space $H_0^1(\mathbb R_+)$. Since $H_0^2(\mathbb
R_+)=\dom({A_{\nu,\infty}})$ is dense in the energy space of the operator
${A_{\nu,\infty}}$, then $\dom(\mathfrak{a}_{\nu,\infty})$ and $H_0^1(\mathbb R_+)$
coincide algebraically and topologically.

(ii) Let $u_\alpha(x)=x^{\frac{1}{2}}\left| \log (x)\right|^{-\alpha}\xi(x)$, then
$$
\mathfrak{a}_{0,\infty}[u_\alpha]=\int
\limits_0^{1/2}\left|u_\alpha'(x)-\frac{u_\alpha(x)}{2x}\right|^2dx=-\frac{\alpha^2
2^{2\alpha+1}}{2\alpha+1}.
$$
So $\{x^{\frac{1}{2}}\left|
\log(x)\right|^{-\alpha}\xi(x)\}\subset\dom(\mathfrak{a}_{0,\infty})$.

Let functions $x^{\frac{1}{2}}\left| \log (x)\right|^{-\alpha}\xi(x)$ are linearly
independent.

Conversely, $\sum\limits_{j=1}^n C_j x^{\frac{1}{2}}\left| \log
(x)\right|^{-\alpha_j}\xi(x)=0$, for $\alpha_j\in(0,\frac{1}{2}]$, $x\in(0,1)$. We order
degrees: \linebreak$\alpha_1<\alpha_2<\ldots<\alpha_n$. Then multiplying by the term
with the smallest degree, we obtain
$$
C_1+\sum\limits_{j=2}^n \left| \log (x)\right|^{-\alpha_j+\alpha_1}=0.
$$
Thus $C_1=0$.

Similarly, we obtain that $C_j=0$. This is a contradiction.

 (iii) We note that $H_0^2(\mathbb{R}_+)\subset H_0^1(\mathbb{R}_+)$. If $u(x)=x^{1/2+\nu}\xi(x)$
then $u'(\cdot)\in L^2(\mathbb  R_+)$, but
$u(\cdot)\not\in\dom({A_{\nu,\infty}})=H_0^{2}(\mathbb R_+)$. By the construction of the
Friedrichs extension and the equalities~\eqref{dom*}, we obtain
        \begin{equation*}
\begin{gathered}
\dom({A_{\nu,\infty}}_{F})=\dom({A^*_{\nu,\infty}})\cap
\dom(\mathfrak{a}_{\nu,\infty}[u])=\dom({A^*_{\nu,\infty}})
\cap H^1_0(\mathbb R_+)=\\
=H_0^{2}(\mathbb R_+)\dotplus \spann\{x^{{1/2+\nu}}\xi(x)\}.
\end{gathered}
\end{equation*}

(iv) The proof follows from~\cite[Theorem 1]{Malamud} and from the fact that
$x^{1/2+\nu}\xi(x)\in H_0^1(\mathbb R_+)$.

\end{proof}
%
%
%
%

\begin{remark}
Theorem~\ref{P:F_def} strengthens and complements the results of the works~\cite{Kalf}
and~\cite{BDG}. For example, for $\nu\in(0,1)$ in~\cite{BDG} it is only shown that
$\mathrm{dom}({A_{\nu,\infty}}_{F})$ is dense in $H_0^1(\mathbb R_+).$
\end{remark}

\begin{remark}\label{P:F_F}
Note that the domains of the Friedrichs extensions in~\eqref{Fri_fi} and~\eqref{E:F_def}
coincide.
\end{remark}

Indeed, since $[f, x^{\frac{1}{2}+\nu}]_0=0$, then for $f=x^{\frac{1}{2}}\xi(x)$, we
obtain
$$
[f,
x^{\frac{1}{2}+\nu}]_0=\lim\limits_{x\to0}\left(\left(\frac{1}{2}+\nu\right)x^{2\nu}\xi(x)-
\left(\frac{1}{2}+\nu\right)x^{2\nu}\xi(x)-x^{1+2\nu}\xi'(x)\right)=0,
$$
where $\xi\in C_0^1(\mathbb R_+)$, $\xi(x)=1$ for $x\in[0,1]$.

\section{Connection of the Weyl functions of the operators ${A_{\nu,b}}$ and ${A_{\nu,\infty}}$}
\begin{proposition}\label{P:A_b}
Let ${A_{\nu,b}}$ and ${A_{\nu,\infty}}$ be the operators with domains~\eqref{Dirihle}
and~\eqref{Dom}, respectively. Assume that $\Pi_{\nu,b}$ and $\Pi_{\nu,\infty}$ be the
boundary triplets of the form~\eqref{triple} and~\eqref{triple1},  $M_{\nu,b}(z)$ and
$M_{\nu;\infty}(z)$ be the Weyl functions of the form~\eqref{Weyl_fun}
and~\eqref{Weyl_fun1}. Then the relation
$$
\lim\limits_{b\rightarrow+\infty}M_{\nu,b}(z)=M_{\nu;\infty}(z)
$$
holds uniformly on compact subsets of $\mathbb C_+$.
\end{proposition}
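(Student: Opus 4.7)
The plan is to reduce the claim to the asymptotic behavior, as $b\to\infty$, of the Bessel quotients
$J_{-\nu}(b\sqrt z)/J_\nu(b\sqrt z)$ (for $\nu\in(0,1)$) and $Y_0(b\sqrt z)/J_0(b\sqrt z)$ (for $\nu=0$)
that appear in the explicit formulas \eqref{Weyl_fun}, and compare with the values of \eqref{Weyl_fun1}.
Fix a compact $K\subset\mathbb C_+$ and pick the branch of $\sqrt z$ with $\imm\sqrt z>0$; then there are
constants $0<c\le C<\infty$ with $c\le|\sqrt z|\le C$ and $\imm\sqrt z\ge c$ for all $z\in K$.
Hence $t:=b\sqrt z$ satisfies $|t|\le bC$ and $\imm t\ge bc\to\infty$ uniformly on $K$, and
$\arg t=\arg\sqrt z\in(0,\pi/2)$ stays inside a fixed proper subsector of $|\arg t|<\pi$.

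For $\nu\in(0,1)$, apply \eqref{asym_infty} and the identity
$\cos(t\pm\nu\pi/2-\pi/4)=\tfrac12 e^{-i(t\pm\nu\pi/2-\pi/4)}\bigl(1+e^{2i(t\pm\nu\pi/2-\pi/4)}\bigr)$.
Because $|e^{2it}|=e^{-2\imm t}\le e^{-2bc}$, the bracket tends to $1$ uniformly on $K$, and the relative size
of the $O(|t|^{-3/2})$ remainder in \eqref{asym_infty} against the dominant exponential
$\tfrac12 e^{-i(t-\nu\pi/2-\pi/4)}$ is $O(b^{-1}e^{-bc})$, again uniform in $z\in K$. Taking the ratio,
$$
\frac{J_{-\nu}(b\sqrt z)}{J_\nu(b\sqrt z)}
=\frac{e^{-i(t+\nu\pi/2-\pi/4)}+o(1)}{e^{-i(t-\nu\pi/2-\pi/4)}+o(1)}\longrightarrow e^{-i\nu\pi}=-e^{i(1-\nu)\pi}
$$
uniformly for $z\in K$. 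Substituting this into the first branch of \eqref{Weyl_fun} reproduces exactly the first
branch of \eqref{Weyl_fun1}, which proves the claim for $\nu\in(0,1)$.

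For $\nu=0$, the same mechanism gives $J_0(t)=\tfrac12\sqrt{2/(\pi t)}\,e^{-i(t-\pi/4)}(1+o(1))$ and
$Y_0(t)=-\tfrac1{2i}\sqrt{2/(\pi t)}\,e^{-i(t-\pi/4)}(1+o(1))$, so
$Y_0(b\sqrt z)/J_0(b\sqrt z)\to -1/i=i$ uniformly on $K$, and plugging this into the second branch of
\eqref{Weyl_fun} yields $-\log(\sqrt z/2)+i\pi/2-\gamma=M_{0,\infty}(z)$ uniformly on $K$.
The main obstacle, and really the only point that requires care, is the uniformity of the estimates:
what makes everything work is the \emph{uniform} positive lower bound $\imm\sqrt z\ge c>0$ available on
any compact $K\subset\mathbb C_+$, which turns the Hankel-type asymptotics into an exponentially small
remainder that is uniform in $z\in K$. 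Once this is in place, the rest is the same algebraic manipulation
already used in the proof of Proposition~\ref{Friendrichs.} (where the limit $z\to-\infty$ was computed from
the same asymptotic formula).
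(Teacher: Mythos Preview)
Your proof is correct and follows essentially the same route as the paper's: both arguments insert the large-argument Hankel asymptotics \eqref{asym_infty} for the Bessel functions into the explicit Weyl functions \eqref{Weyl_fun}, isolate the dominant exponential $e^{-i(t\mp\nu\pi/2-\pi/4)}$ in the cosine (respectively sine), and read off the limiting ratio $e^{-i\nu\pi}$ (respectively $i$). The only difference is that you spell out the uniformity on compact $K\subset\mathbb C_+$ via the lower bound $\imm\sqrt z\ge c>0$, whereas the paper simply asserts that uniformity is ``easily seen.''
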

\begin{proof}
First, we consider the case $\nu\in(0,1)$.  Since the Bessel functions $J_{\nu}(t)$ and
$J_{-\nu}(t)$ for $t\rightarrow\infty$ have the asymptotic behavior \eqref{asym_infty},
we have
 \[\lim\limits_{b\rightarrow+\infty}M_{\nu,b}(z)=-\lim\limits_{b\rightarrow+\infty}
 \frac{\Gamma(1-\nu)}{2\nu4^{\nu}\Gamma(1+\nu)}\cdot\frac{J_{-\nu}(b\sqrt{z})}{J_{\nu}(b\sqrt{z})}\cdot
z^{\nu}=
\]
\[=-\lim\limits_{b\rightarrow+\infty}
\left[\frac{\Gamma(1-\nu)}{2\nu4^{\nu}\Gamma(1+\nu)}\cdot\frac{\cos\left(b\sqrt{z}+\frac{\nu\pi}{2}-\frac{\pi}{4}\right)}
{\cos\left(b\sqrt{z}-\frac{\nu\pi}{2}-\frac{\pi}{4}\right)}\cdot{z^{\nu}}\right]=
\]
\[=-\frac{\Gamma(1-\nu)}{2\nu4^{\nu}\Gamma(1+\nu)}\lim\limits_{b\rightarrow+\infty}
\frac{e^{-i(b\sqrt{z}+\frac{\nu\pi}{2}-\frac{\pi}{4})}}
{e^{-i(b\sqrt{z}-\frac{\nu\pi}{2}-\frac{\pi}{4})}}\cdot
z^{\nu}=e^{i(1-\nu)\pi}\frac{\Gamma(1-\nu)}{2\nu4^{\nu}\Gamma(1+\nu)}\cdot{z^{\nu}}=M_{\nu;\infty}(z).
\]

The case $\nu=0$ is treated similarly. Namely,
\begin{eqnarray*}
&{}&\lim\limits_{b\rightarrow+\infty}M_{0,b}(z)=\lim\limits_{b\rightarrow+\infty}\left[
-\log\left(\frac{\sqrt{z}}{2}\right)+\frac{\pi}{2}\frac{Y_0(b\sqrt{z})}{J_0(b\sqrt{z})}-\gamma\right]\\
=\lim\limits_{b\rightarrow+\infty}&{}&\left[-\log\left(\frac{\sqrt{z}}{2}\right)+\frac{\pi}{2}\cdot
\frac{\sin(b\sqrt{z}-\frac{\pi}{4})}{\cos(b\sqrt{z}-\frac{\pi}{4})}-\gamma\right]=-\log\left(\frac{\sqrt{z}}{2}\right)
+\frac{\pi i}{2}-\gamma=M_{\nu;\infty}(z).
\end{eqnarray*}
It is easily seen that the convergence in both relations is uniform on compact subsets.
\end{proof}

\section{Singular Sturm-Liouville operators of the Bessel type}
Here, we consider the Sturm-Liouville differential expression
\begin{equation}\label{expres}
\tau u:=-u''+qu
\end{equation}
in $L^2(\mathbb{R}_+)$ with certain potentials $q$.

The minimal operator $T_{\min}=T$ associated with \eqref{expres} is the closure of the
operator $T'$ of the form
\begin{equation}\label{min}
T'u:=\tau u, \quad \dom(T')=\{u:u\in\mathfrak{D},
u\ \text{has the compact support in}\ (0,\infty)\},
\end{equation}
where
\begin{equation}\label{D}
\mathfrak{D}:=\{u: u\in AC_{\mathrm{loc}}(\mathbb{R}_+)\cap L^2(\mathbb{R}_+), u'\in
AC_{\mathrm{loc}}(\mathbb{R}_+), \tau u\in L^2(\mathbb{R}_+)\},
\end{equation}
and $T$ is a densely defined symmetric operator.

The maximal operator associated with \eqref{expres} is
\begin{equation}\label{max}
T_{\max}=T^*=\tau\upharpoonright\mathfrak{D}.
\end{equation}

The following relations hold:
$$
T_{\min}=T=\overline{T'}=T^{**}=T_{\max}^*.
$$

\begin{corollary}\label{1}
Let $q\in L^1_{\mathrm{loc}}(\mathbb{R}_+)$ and
\begin{equation}\label{q1}
q(x)\geq\frac{\beta}{x^2}-\mu, \quad (x\in\mathbb{R}_+)
\end{equation}
for some $\beta>-\frac{1}{4}$ and $\mu\geq0$. Then
\begin{itemize}
\item[\textit{(i)}] The closure $\mathfrak{t}_q$ of the quadratic form
  $\mathfrak{t}'_q$  associated with the operator $T$ is
\begin{equation}\label{form}
\begin{gathered}
\mathfrak{t}_q[u]=\int\limits_0^{\infty}|u'(x)|^2dx+\int\limits_0^{\infty}q(x)\cdot|u(x)|^2dx,\\
\qquad \dom(\mathfrak{t}_q)=\{u\in
H^1_0(\mathbb{R}_+):\int\limits_0^{\infty}q(x)\cdot|u(x)|^2dx<\infty\}=:H^1_0(\mathbb{R}_+;q).
\end{gathered}
\end{equation}
\item[\textit{(ii)}]\cite{Kalf} The domain of the Friedrichs extension $T_F$ of $T$ is
\begin{equation}\label{fried}
\dom(T_F)=\mathfrak{D}\cap H^1_0(\mathbb{R}_+;q),
\end{equation}
where $\mathfrak{D}$ is given by \eqref{D}.
\end{itemize}
\end{corollary}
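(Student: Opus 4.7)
My plan is to deduce both parts from the Bessel-case arguments already developed (Theorem \ref{P:F_def}, in particular), replacing the model potential $\nu^2-1/4$ by the general $q$ satisfying \eqref{q1}, and using Hardy's inequality to control the form.

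For part (i), I would first verify that the form $\mathfrak{t}'_q$ is semibounded (hence closable) on $\dom(T')$. Writing $q = (q+\mu)-\mu$ and using \eqref{q1}, for $u$ compactly supported in $(0,\infty)$,
\begin{equation*}
\mathfrak{t}'_q[u]+\mu\|u\|_2^2 \geq \int_0^\infty |u'(x)|^2\,dx + \beta\int_0^\infty \frac{|u(x)|^2}{x^2}\,dx.
\end{equation*}
When $\beta\geq 0$ semiboundedness is trivial; when $-1/4<\beta<0$, Hardy's inequality $\int_0^\infty |u|^2/x^2\,dx \leq 4\int_0^\infty|u'|^2\,dx$ yields the lower bound $(1+4\beta)\int_0^\infty|u'|^2\,dx-\mu\|u\|_2^2$ with $1+4\beta>0$, giving semiboundedness in this case as well. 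This mirrors the estimates in \eqref{E:Kvad_form11}--\eqref{E:Kvad_form22} and lets us apply the standard form-closure machinery.

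Next, I would identify the domain of the closure. The inclusion $\dom(\mathfrak{t}_q)\subseteq H^1_0(\mathbb{R}_+;q)$ follows because any Cauchy sequence $\{u_n\}\subset\dom(T')$ in the form norm is Cauchy both in $H^1_0(\mathbb{R}_+)$ (by Hardy applied as above) and in the weighted $L^2(\mathbb{R}_+;q+\mu)$ norm, so its limit lies in $H^1_0(\mathbb{R}_+)$ with $\int q|u|^2<\infty$. For the converse inclusion, I would approximate a given $u\in H^1_0(\mathbb{R}_+;q)$ by smooth functions of compact support in $(0,\infty)$: truncate $u$ near $0$ and near $\infty$ with suitable cutoffs, then mollify. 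The truncation error in $\|\cdot\|_{H^1_0}$ is standard; the error in $\int q|u|^2$ is handled by dominated convergence, using that $q\cdot|u|^2\in L^1$ by assumption and the cutoff increases to $1$. This identifies $\dom(\mathfrak{t}_q)= H^1_0(\mathbb{R}_+;q)$ and gives \eqref{form}. The main obstacle here is the low-$x$ cutoff in the regime $-1/4<\beta<0$, where one must ensure that the approximating sequence keeps the form finite; the gap in Hardy's inequality ($1+4\beta>0$) is exactly what makes this go through.

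For part (ii), I would invoke the standard description of the Friedrichs extension: $T_F=T^*\!\upharpoonright\dom(T^*)\cap\dom(\mathfrak{t}_q)$. Combined with $\dom(T^*)=\mathfrak{D}$ from \eqref{max} and part (i), this gives
\begin{equation*}
\dom(T_F)=\mathfrak{D}\cap H^1_0(\mathbb{R}_+;q),
\end{equation*}
which is \eqref{fried}. The content of Kalf's result cited in (ii) is the quadratic-form description in part (i); once that is in hand, \eqref{fried} is immediate from the abstract Friedrichs construction, exactly as in the derivation of \eqref{E:F_def} from \eqref{E:1.100ae} in the proof of Theorem \ref{P:F_def}.
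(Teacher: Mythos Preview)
Your proposal is correct and follows essentially the same route as the paper: reduce to the Bessel comparison via Hardy's inequality (the paper writes $\beta=\nu^2-\tfrac14$ and invokes $\dom(\mathfrak{t}'_q)\subset\dom(\mathfrak{a}_{\nu,\infty})=H^1_0(\mathbb{R}_+)$ from Theorem~\ref{P:F_def}), then obtain \eqref{fried} from the abstract identity $\dom(T_F)=\dom(T^*)\cap\dom(\mathfrak{t}_q)$. Your version is in fact more complete than the paper's, which simply asserts the form-domain identification ``by taking closures,'' whereas you spell out the cutoff-and-mollify density argument for the reverse inclusion $H^1_0(\mathbb{R}_+;q)\subseteq\dom(\mathfrak{t}_q)$.
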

\begin{proof}
Without loss of generality, we can assume that $\mu=0$. Let
$\beta=\nu^2-\frac{1}{4}>-\frac{1}{4}.$
 Consider the quadratic form $\mathfrak{t}_q$ associated with the operator
$T_F$. Since $q(x)>\frac{\nu^2-\frac{1}{4}}{x^2}$, we have
\begin{equation}\label{*}
\dom(\mathfrak{t}'_q)\subset\dom(\frak a_{\nu,\infty})=H^1_0(\mathbb{R}_+),
\end{equation}
where $a_{\nu,\infty}$ is given by \eqref{E:1.100}.


Further, let $u(\cdot)\in C_0^\infty(\mathbb{R}_+)\subset\dom(T')$. Integrating by
parts, we obviously have
\begin{equation}\label{form_proof}
\begin{gathered}
\mathfrak{t}'_q[u]=(Tu,u)=\lim\limits_{x\rightarrow\infty}\left[u'(t)u(t)|_0^{x}+\int\limits_0^{x}|u'(t)|^2dt+
\int\limits_0^{x}q(t)\cdot|u(t)|^2dt\right]\\=\int\limits_0^{\infty}|u'(x)|^2dx+\int\limits_0^{\infty}q(x)\cdot|u(x)|^2dx.
\end{gathered}
\end{equation}
Taking the closure of these forms and~\eqref{*} into account, we arrive at~\eqref{form}.

According to the construction of the Friedrichs extension and~\eqref{D}, we get
\begin{equation*}
\dom(T_F)=\dom(T^*)\cap \dom(\mathfrak{t}_q)=\mathfrak{D}\cap H^1_0(\mathbb{R}_+;q).
\end{equation*}
The Corollary is proved.
\end{proof}

\begin{corollary}
Let $q\in L^1_{\mathrm{loc}}(\mathbb{R}_+)$ and
\begin{equation}\label{q2}
q(x)\geq-\frac{1}{4x^2}-\mu, \quad (x\in\mathbb{R}_+)
\end{equation}
for some $\mu\geq0$.
 Then
\begin{itemize}
\item[\textit{(i)}] The closure $\mathfrak{t}_q$ of the quadratic form
  $\mathfrak{t}'_q$  associated with the operator $T$ takes the form
\begin{gather}\label{form'}
\mathfrak{t}_q[u]=\int\limits_0^{\infty}|u'(x)|^2dx+\int\limits_0^{\infty}q(x)\cdot|u(x)|^2dx,\\
\quad \dom(\mathfrak{t}_q)=
{H}^1_0(\mathbb{R}_+;q)\dot{+}\mathrm{span}\{x^{1/2}\xi(x)\},
\end{gather}
where $\xi\in C_0^1(\mathbb R_+)$, $\xi(x)=1$, for $x\in[0,1]$.
\item[\textit{(ii)}]\cite{Kalf} The domain of the Friedrichs extension of $T$ is
\begin{equation}\label{fried'}
\dom(T_F)=\mathfrak{D}\cap
({H}^1_0(\mathbb{R}_+;q)\dot{+}\mathrm{span}\{x^{1/2}\xi(x)\}),
\end{equation}
where $\mathfrak{D}$ is given by \eqref{D}.
\end{itemize}
\end{corollary}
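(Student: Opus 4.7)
The proof will parallel that of Corollary~\ref{1}, with the subcritical Bessel model operator $A_{\nu,\infty}$ ($\nu>0$) replaced by its critical counterpart $A_{0,\infty}$ ($\nu=0$), whose Friedrichs quadratic form $\mathfrak{a}_{0,\infty}[u]=\int_0^\infty|u'(x)-u(x)/(2x)|^2\,dx$ was computed in Theorem~\ref{P:F_def}(ii). The key starting point on $u\in C_0^\infty(\mathbb R_+)$ is the identity
\begin{equation*}
\mathfrak{t}'_q[u] \;=\; \mathfrak{a}_{0,\infty}[u] \;+\; \int_0^\infty\!\Bigl(q(x)+\tfrac{1}{4x^2}\Bigr)|u(x)|^2\,dx,
\end{equation*}
obtained by integration by parts. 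After the standard shift reducing to $\mu=0$, hypothesis~\eqref{q2} renders the second summand the integral of a nonnegative weight, so both summands are nonnegative closable forms and their sum $\mathfrak{t}'_q$ is closable; its closure is $\mathfrak{t}_q$, and $\dom(\mathfrak{t}_q)\subseteq\dom(\mathfrak{a}_{0,\infty})$.

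For part~(i) I would verify both inclusions of the claimed equality. The inclusion $H^1_0(\mathbb R_+;q)\subset\dom(\mathfrak{t}_q)$ is established exactly as in Corollary~\ref{1} via approximation by $C_0^\infty(\mathbb R_+)$. To see that $x^{1/2}\xi(x)\in\dom(\mathfrak{t}_q)$, I would use the direct computation $(x^{1/2}\xi)'(x)-\tfrac{x^{1/2}\xi(x)}{2x}=x^{1/2}\xi'(x)$, which yields $\mathfrak{a}_{0,\infty}[x^{1/2}\xi]=\int_0^\infty x|\xi'(x)|^2\,dx<\infty$, combined with the finiteness of the corresponding potential contribution. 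Directness of the sum holds because $(x^{1/2}\xi)'(x)\sim\tfrac{1}{2}x^{-1/2}$ is not in $L^2$ near $0$, so $x^{1/2}\xi\notin H^1_0(\mathbb R_+;q)$. The reverse inclusion would be extracted from the description of $\dom(A_{0,\infty}^*)$ in~\eqref{dom*} together with Remark~\ref{P:F_F}, identifying $x^{1/2}\xi(x)$ as the unique ``Friedrichs-side'' direction beyond $H^1_0(\mathbb R_+;q)$ while excluding $x^{1/2}\log(x)\xi(x)$.

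Part~(ii) is then immediate from the abstract Friedrichs identification, in view of $T^*=\tau\!\upharpoonright\!\mathfrak D$ from~\eqref{max}:
\begin{equation*}
\dom(T_F)\;=\;\dom(T^*)\cap\dom(\mathfrak{t}_q)\;=\;\mathfrak D\cap\bigl(H^1_0(\mathbb R_+;q)\,\dot{+}\,\mathrm{span}\{x^{1/2}\xi(x)\}\bigr).
\end{equation*}

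\textbf{Main obstacle.} The delicate point is the sharp one-dimensional nature of the enlargement. Theorem~\ref{P:F_def}(ii) exhibits an entire infinite-dimensional family $\{x^{1/2}|\log x|^{-\alpha}\xi(x)\}_{0<\alpha\leq 1/2}$ of directions independent modulo $H^1_0(\mathbb R_+)$ inside $\dom(\mathfrak{a}_{0,\infty})$, so a priori the quotient $\dom(\mathfrak{t}_q)/H^1_0(\mathbb R_+;q)$ could also be infinite dimensional when $q$ is close to $-1/(4x^2)$. Ruling out these logarithmic directions requires using the positivity of $q(x)+1/(4x^2)$ beyond what is granted by~\eqref{q2} alone, or, equivalently, importing the operator-level (rather than merely form-level) description of the model Friedrichs extension from Theorem~\ref{P:F_def}(iii). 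This is the step that most clearly separates the critical regime here from the subcritical setting of Corollary~\ref{1}.
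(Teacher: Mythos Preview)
Your approach is exactly the paper's: its entire proof reads ``The proof is similarly to Corollary~\ref{1},'' i.e., rerun the argument of Corollary~\ref{1} with the model operator $A_{0,\infty}$ (Theorem~\ref{P:F_def}(ii)) in place of $A_{\nu,\infty}$, $\nu>0$.

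The obstacle you flag is genuine, and the paper does not address it. In fact the equality claimed in part~(i) cannot hold in general under hypothesis~\eqref{q2}: take $q(x)=-1/(4x^2)$ (allowed with $\mu=0$). Then $\mathfrak{t}_q=\mathfrak{a}_{0,\infty}$, and by Hardy's inequality $H^1_0(\mathbb R_+;q)=H^1_0(\mathbb R_+)$; but Theorem~\ref{P:F_def}(ii) states $\dim\bigl(\dom(\mathfrak{a}_{0,\infty})/H^1_0(\mathbb R_+)\bigr)=\infty$, which is incompatible with a one-dimensional enlargement. So the equality in~(i) should presumably be the inclusion $\supset$, parallel to~\eqref{E:1.100aae}. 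Your argument establishes precisely that inclusion (and directness), which is all that the paper's one-line proof can yield as well.

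Part~(ii) is unaffected by this defect and your route is the right one. Even with only the inclusion in~(i), the intersection with $\mathfrak D=\dom(T^*)$ kills the logarithmic directions: in the model case $\mathfrak D=H^2_0(\mathbb R_+)\dotplus\mathrm{span}\{x^{1/2}\xi,\,x^{1/2}\log(x)\xi\}$ by~\eqref{dom*}, and $x^{1/2}\log(x)\xi\notin H^1_0(\mathbb R_+)\dot{+}\mathrm{span}\{x^{1/2}\xi\}$ since $(x^{1/2}\log x)'\sim\tfrac12 x^{-1/2}\log x\notin L^2(0,1)$. Hence $\dom(T_F)=\mathfrak D\cap\dom(\mathfrak t_q)=\mathfrak D\cap\bigl(H^1_0(\mathbb R_+;q)\dot{+}\mathrm{span}\{x^{1/2}\xi\}\bigr)$ via Theorem~\ref{P:F_def}(iii), exactly as you indicated.
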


The proof is similarly to Corollary~\ref{1}.

\begin{corollary}
Another description of the Friedrichs extension
was obtained by H.~Kalf in~\cite{Kalf}
\begin{equation*}
\dom(T_F)=\left\{u:u\in\mathfrak D,\
\int\limits_0^\infty\left|u'-\frac{u}{2x}\right|<\infty\right\}.
\end{equation*}
\end{corollary}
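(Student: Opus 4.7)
The plan is to reduce this alternative characterization to the form description
$\dom(T_F)=\mathfrak{D}\cap({H}^1_0(\mathbb{R}_+;q)\dot{+}\spann\{x^{1/2}\xi(x)\})$ obtained in the preceding corollary, by rewriting the Friedrichs form $\mathfrak{t}_q[\cdot]$ in terms of the first order expression $u'-u/(2x)$ (assuming the integrand in the statement is $|u'-u/(2x)|^2$, as required for the form interpretation). The algebraic identity that drives the whole argument is
\begin{equation*}
\left|u'-\frac{u}{2x}\right|^2=|u'|^2-\frac{(|u|^2)'}{2x}+\frac{|u|^2}{4x^2}.
\end{equation*}

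Integrating over $(\varepsilon,R)$ and applying integration by parts to the middle term yields
\begin{equation*}
\int_{\varepsilon}^{R}\left|u'-\frac{u}{2x}\right|^2dx=\int_{\varepsilon}^{R}|u'|^2dx-\int_{\varepsilon}^{R}\frac{|u|^2}{4x^2}dx-\left[\frac{|u|^2}{2x}\right]_{\varepsilon}^{R}.
\end{equation*}
Using the hypothesis $q(x)\geq-1/(4x^2)-\mu$, I would then regroup the Friedrichs form as
\begin{equation*}
\mathfrak{t}_q[u]+\mu\|u\|_{2}^{2}=\int_{0}^{\infty}\left|u'-\frac{u}{2x}\right|^2dx+\int_{0}^{\infty}\left(q(x)+\frac{1}{4x^2}+\mu\right)|u(x)|^2dx+\left[\frac{|u|^2}{2x}\right]_{0}^{\infty}.
\end{equation*}
Since the second integrand is nonnegative, the finiteness of $\mathfrak{t}_q[u]$ is equivalent, modulo the boundary term, to the finiteness of the $u'-u/(2x)$ integral. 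Combining this equivalence with $\dom(T_F)=\mathfrak{D}\cap\dom(\mathfrak{t}_q)$ from \eqref{fried'} yields the desired description.

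The main technical obstacle is controlling the boundary contribution $[|u|^2/(2x)]_{0}^{\infty}$ for $u\in\mathfrak{D}$. At infinity, $|u(x)|^2/(2x)\to 0$ follows from $u\in L^2(\mathbb{R}_+)$ combined with the local Sobolev regularity implied by $u,u'\in AC_{\mathrm{loc}}$ and $\tau u\in L^2$, via a standard argument that rules out persistent oscillation of $|u|^2$ on dyadic intervals at infinity. At the origin, elements of $\dom(T_F)$ admit the decomposition $u=v+cx^{1/2}\xi(x)$ with $v\in H^1_0(\mathbb{R}_+;q)$, so $|u(x)|^2/(2x)$ tends to the finite limit $c^2/2$; this constant is exactly compensated by the contribution of $c\sqrt{x}\xi'(x)$ in the computation of $u'-u/(2x)=v'-v/(2x)+c\sqrt{x}\xi'(x)$, giving an $L^2$ expression (Hardy's inequality handles the $v/(2x)$ piece). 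Once both boundary limits are shown to exist and to match on each side, the two descriptions coincide and the corollary follows from \eqref{fried'}.
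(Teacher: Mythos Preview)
The paper does not actually prove this corollary: it is stated purely as an attribution to Kalf~\cite{Kalf}, with no argument given. So there is no ``paper's own proof'' to compare against; you are supplying a proof where the authors simply cite the literature.

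Your strategy---rewriting the Friedrichs form via the identity
\[
\left|u'-\tfrac{u}{2x}\right|^2=|u'|^2-\tfrac{(|u|^2)'}{2x}+\tfrac{|u|^2}{4x^2}
\]
and integrating by parts---is exactly the right algebraic engine, and the forward inclusion $\dom(T_F)\subset\{u\in\mathfrak D:\int|u'-u/(2x)|^2<\infty\}$ follows cleanly from your decomposition $u=v+c\,x^{1/2}\xi$ together with Hardy's inequality, as you indicate.

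The gap is in the reverse inclusion. From
\[
\mathfrak{t}_q[u]+\mu\|u\|_2^2=\int_0^\infty\left|u'-\tfrac{u}{2x}\right|^2dx+\int_0^\infty\Bigl(q+\tfrac{1}{4x^2}+\mu\Bigr)|u|^2\,dx+\text{(boundary)},
\]
nonnegativity of the second integrand gives you only one implication: finiteness of the left side forces finiteness of both pieces on the right. It does \emph{not} give the converse. For a general $u\in\mathfrak D$ with $\int|u'-u/(2x)|^2<\infty$ you have no a priori bound on $\int(q+\tfrac{1}{4x^2}+\mu)|u|^2$, and hence no direct route to $u\in\dom(\mathfrak t_q)$. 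The clean way to close this is a codimension argument: under the hypothesis $q\ge -1/(4x^2)-\mu$ the endpoint $\infty$ is limit-point, so $\dim(\mathfrak D/\dom(T_F))=n_\pm(T)=1$, and it suffices to check that a single representative of $\mathfrak D\setminus\dom(T_F)$ (which behaves like $x^{1/2}\log x$ near $0$) fails the condition $\int|u'-u/(2x)|^2<\infty$; indeed $u'-u/(2x)\sim x^{-1/2}$ then. This replaces the unjustified ``equivalence'' in your sketch.
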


\textbf{Acknowledgments.} The authors express their gratitude to M.M.~Malamud for posing
the problem and the permanent attention to the work, and to A.S.~Kostenko for useful
discussions and remarks.



\begin{thebibliography}{99}



\bibitem {A&S}
        \emph{Handbook of Mathematical Functions With Formulas, Graphs, and Mathematical
        Tables}
        / ed. M. Abramowitz, I.A. Stegun, United States Department
        of Commerce, Washington D.C., 1972.

\bibitem{AG}
       N.I. Akhiezer and I.M. Glazman, \emph{Theory of Linear Operators in Hilbert Space} –
Volume II, Pitman, London, 1981; reprinted: Dover, New York, 1993.

\bibitem{AA}
       V.S.  Alekseeva, A.Yu. Ananieva,  \emph{On extensions of the Bessel operator on a finite interval
       and a half-line}, J. of Math. Scien. \textbf{187} (2012) 1, 1--8.

\bibitem{BDG}
       L. Bruneau, J. Derezi\'{n}ski, V. Georgescu,  \emph{Homogeneous Schr\"{o}dinger
        Operators on Half-Line}, Ann. Henri Poincar\'{e} \textbf{12} (2011),
547--590.
\bibitem{CG}
      S. Clark, F. Gesztesy, R. Nichols,  \emph{Principal Solutions Revisited},
       arXiv.org:1401.1285.
\bibitem{DM_91}
       V.A.  Derkach, M.M. Malamud,  \emph{Generalized rezolvent and the boundary value
problems for Hermitian operators with gaps}, J. Funct. Anal. \textbf{95} (1991) 1,
1--95.
\bibitem{DerMal95}
V.A. Derkach, M.M. Malamud,  \emph{The extension theory of Hermitian operators and the
moment problem}, J. Math. Sci. (New York) \textbf{73} (1995), 141--242.
\bibitem{EGNT}
        J. Eckhardt, F. Gesztesy, R. Nichols, G. Teschl,  \emph{Weyl--Titchmarsh Theory
for Sturm--Liouville Operators With Distributional Potentials}, Opuscula Math.
\textbf{33} (2013) 3, 467--563.

\bibitem{EvKalf}
         W.N. Everitt, H. Kalf,  \emph{The Bessel differential equation and the
         Hankel transform}, Jour. of Comput. and App. Math. \textbf{208} (2007), 3--19.

 \bibitem{Ful} C. Fulton, \emph{Titchmarsh--Weyl $m$--functions for second--order Sturm--Liouville
  problems with two singular endpoints}, Math. Nachr. \textbf{281} (2008), No. 10, 1418--1475.

\bibitem{FulLag} C. Fulton, H. Langer, \emph{Sturm--Liouville operators with singularities and
generalized Nevanlinna functions}, Complex Analysis and Operator Theory. \textbf{4}
(2010), No. 2, 179--243.

\bibitem{GG}
   V.I. Gorbachuk,  M.L. Gorbachuk, \textit{Boundary Value Problems for
Operator Differential Equations}, Mathematics and its Applications (Soviet Series) 48,
Kluwer Academic Publishers Group, Dordrecht, 1991.
\bibitem {Oliveira} Cesar R. de Oliveira, \emph{Intermediate
        Spectral Theory and Quantum Dynamics,} Birkhauser, Berlin, 2000.
\bibitem{Kalf}
         H. Kalf,  \emph{A Characterization of the Friedrichs Extension of
          Sturm-Liouville Operators}, J. London Math. Soc. \textbf{17} (1978), No. 2, 511--521.
\bibitem{Ka}
        T. Kato, \emph{Perturbation Theory for Linear Operators.} Springer, Berlin, 1995.

\bibitem {Kochybei}
        A. N. Kochybei, \emph{Self-adjoint extensions of the SchrЁodinger operator with
        singular potential}, Sib. Mat. Zh. \textbf{32} (1991), No. 3, 60--69.

\bibitem {KST} {A.~Kostenko, G.~Teschl} \emph{On the singular Weyl--Titchmarsh function
of perturbed spherical
 Schr\"odinger operators}, J. Diff. Eqs. \textbf{250} (2011), 3701--3739.

\bibitem{Malamud}
M.M. Malamud,  \emph{On some classes extensions of the Hermitian operators with gaps},
UMJ \textbf{44} (1992), No. 2, 215--233.

\bibitem {Naj69}
        M. A. Naimark, \emph{Linear Differential Operators}, Nauka, Moscow, 1969.

\bibitem{Rid}
        M. Reed and B. Simon, \emph{Methods of Modern Mathematical Physics,} Academic Press,
        Inc., New York, 1972.
\bibitem {Stein}
        E. M. Stein, \emph{Singular Integrals and Differentiability Properties of
        Functions,} Mir, Moscow, 1973.

\end{thebibliography}
\end{document}